\NewDocumentCommand\DownArrow{O{2.0ex} O{black}}{%
   \mathrel{\tikz[baseline] \draw [<->, line width=0.5pt, #2] (0,0) -- ++(0,#1);}
}
\newtheorem{theorem}{Theorem}[section]
\newtheorem{lemma}[theorem]{Lemma}
\newtheorem{corollary}[theorem]{Corollary}
\newtheorem{proposition}[theorem]{Proposition}
\theoremstyle{definition}
\newtheoremstyle{boldremark}
    {\dimexpr\topsep/2\relax} 
    {\dimexpr\topsep/2\relax} 
    {}          
    {}          
    {\bfseries} 
    {.}         
    {.5em}      
    {}          
\theoremstyle{boldremark}
\newtheorem{remark}[theorem]{Remark} 
\DeclareMathOperator{\PF}{\mathrm{PF}}
\DeclareMathOperator{\lel}{\mathrm{lel}}
\DeclareMathOperator{\slev}{\mathrm{slev}}
\DeclareMathOperator{\one}{\mathrm{ones}}
\newcommand{\uR}{\boldsymbol{u}}
\newcommand{\Sym}{\mathfrak{S}}
\newcommand{\PR}{\mathbb P}
\newcommand{\FC}{\mathcal F}
\newcommand{\calt}{\mathcal{T}}
\newcommand{\be}{\begin{equation}}
\newcommand{\ee}{\end{equation}}
\newcommand{\beas}{\begin{eqnarray*}}
\newcommand{\eeas}{\end{eqnarray*}}
\newcommand{\dconv}{\overset{\mathrm{d}}{\to}}
\newcommand{\old}[1]{}
\newcommand{\red}[1]{{\color{red} #1}}
\newcommand{\blue}[1]{{\color{blue} #1}}
\title{Distribution of new statistics of parking functions and their generalizations}
\author[1]{Stephan Wagner}
\address[Stephan Wagner]{Institute of Discrete Mathematics, TU Graz, Steyrergasse 30, 8010 Graz, Austria \and Department of Mathematics, Uppsala University, Box 480, 751 06 Uppsala, Sweden}
\email{\textcolor{blue}{\href{mailto:stephan.wagner@tugraz.at}{stephan.wagner@tugraz.at}}}
\thanks{S. Wagner was supported by the Swedish research council (VR), grant 2022-04030.}
\author[2]{Catherine H. Yan}
\address[Catherine H. Yan]{Department of Mathematics, Texas A \&~M University, College Station, TX 77843, USA}
\email{\textcolor{blue}{\href{mailto:huafei-yan@tamu.edu}{huafei-yan@tamu.edu}}}
\thanks{C.H.~Yan was supported  by the Simons Collaboration Grant for Mathematics 704276.}
\author[3]{Mei Yin}
\address[Mei Yin]{Department of Mathematics, University of Denver, Denver, CO 80208, USA}
\email{\textcolor{blue}{\href{mailto:mei.yin@du.edu}{mei.yin@du.edu}}}
\thanks{M.~Yin was supported by the Simons Foundation Grant MPS-TSM-00007227.}
\thanks{An extended abstract for this paper previously appeared in the Proceedings of the 35th International Conference on Probabilistic, Combinatorial and Asymptotic Methods for the Analysis of Algorithms (AofA 2024) \cite{wagner_et_al:LIPIcs.AofA.2024.29}.}
\begin{document}

\keywords{parking function; labeled forest; generating function; Pollak's circle argument; bijection} 

\subjclass[2020]{
05A15; 
05A19, 
60C05} 

\begin{abstract}
In this paper we present new results on the enumeration of parking
functions and labeled forests. We introduce new statistics on parking functions, which are then extended to labeled forests via bijective correspondences. We determine the joint distribution of two statistics on parking functions and their counterparts on labeled forests. Our results on labeled forests also serve to explain the mysterious equidistribution between two seemingly unrelated statistics in parking functions recently identified by Stanley and Yin and give an explicit bijection between the two statistics. Extensions of our techniques are discussed, including joint distribution on further refinement of these new statistics. 
\end{abstract}

\maketitle

\section{Introduction}
\label{sec:intro}

In this paper we present new results on the enumeration of parking
functions and labeled forests. We introduce new statistics on parking functions, which are then extended to labeled forests via bijective correspondences. We determine the joint distribution of two statistics on parking functions and their counterparts on labeled forests. 
Our enumerative results for parking functions and for labeled forests inform each other. In particular, our results on labeled forests serve to explain the mysterious symmetric joint distribution of two seemingly unrelated statistics in parking functions recently identified by Stanley and Yin \cite{SY} and give an explicit bijection between the two statistics.

We begin with the necessary definitions. In the parking
function scenario due to Konheim and Weiss \cite{k-w}, there are $n$
parking spaces on a one-way street, labeled $1,2,\dots,n$ in
consecutive order. A line of $m \leq n$ cars enters the street, one by one. The $i$-th car drives to its preferred spot $\pi_i$ and parks there if possible; if the spot is already occupied then the car parks in the first available spot after that. The list of preferences $\pi=(\pi_1,\dots,\pi_m)$ is called a \emph{parking function} if all cars successfully park. (The parking function is called `classical' when $m=n$.) We denote the set of parking functions by $\PF(m, n)$, where $m$ is the number of cars and $n$ is the number of parking spots. 
Using the pigeonhole principle, we see that a parking function $\pi \in \PF(m, n)$ must have at most one value $=n$, at most two values $\geq n-1$, and for each $k$ at most $k$ values $\ge n-k+1$, and any such function is a parking function. Equivalently, $\pi$ is a parking function if and only if
\begin{equation}\label{pigeon}
\#\{k: \pi_k \leq i\} \geq m-n+i, \hspace{.2cm} \text{for } i=n-m+1, \dots, n.
\end{equation}
We make two immediate observations from \eqref{pigeon}. The first observation is that parking functions are invariant under the action of the symmetric group $\Sym_m$ permuting the $m$ cars, that is, permuting the list of preferences $\pi$. The second observation is that when some $\pi_i$ takes values in the set $\{1, 2, \dots, n-m+1\}$, changing $\pi_i$ to any other value in the set $\{1, 2, \dots, n-m+1\}$ has no effect on $\pi$ being a parking function.

One of the most fundamental results on parking functions is that the number of parking functions is
$\vert \PF(m, n)\vert=(n-m+1)(n+1)^{m-1}$. A famous combinatorial proof in the classical case was given by
Pollak (unpublished but recounted in \cite{Pollak} and
\cite{Pollak2}). See also Pitman and Stanley \cite{PS1} for a generalization of Pollak's circle argument. The combinatorial argument boils down to the following easily verified
statement: Let $G$ denote the group of all $m$-tuples
$(\pi_1,\dots,\pi_m)\in [n+1]^m$ with componentwise addition modulo
$n+1$. Let $H$ be the subgroup of $G$ generated by $(1,1,\dots,1)$. Then
every coset of $H$ contains exactly $n-m+1$ parking functions. Interpreted probabilistically, the combinatorial operation involves assigning $m$ cars on a circle with $n+1$ spots and recording those car assignments where spot $n+1$ is left empty after circular rotation. Since there are $n-m+1$ missing spaces for the
assignment of any preference sequence, any preference sequence $\pi$ has $n-m+1$ rotations that
are valid parking functions. Our parking function proofs will be based on refinements of Pollak's argument, where we investigate the individual parking statistics for each car the moment it is parked on the circle.

This new approach first appeared in a paper by Stanley and Yin \cite{SY} and is extended in this paper, where we introduce the new statistics \emph{leading elements}
and \emph{size of level set} on parking functions $\pi \in \PF(m, n)$ and examine their joint distributions.
The statistic size of level set counts the total number of cars whose preferred spot is in the range $\{1, 2, \dots, n-m+1\}$ and generalizes the $1$'s statistic for classical parking functions, which counts the total number of cars that prefer spot $1$. The statistic leading elements was introduced in \cite{SY} earlier for classical parking functions $\pi \in \PF(n, n)$ and counts the total number of cars whose desired spot is the same as that of the first car. It was shown in \cite[Theorem 4.2]{SY} via a generating function approach that for classical parking functions the leading elements statistic is equidistributed with the widely-studied $1$'s statistic. This feature of parking functions is quite mysterious as these two parking function statistics seem unrelated and are not of the same nature. While the leading elements statistic is invariant under circular rotation, it does not satisfy permutation symmetry as
permuting the entries might change the first element. On the other
hand, though the $1$'s statistic is invariant under permuting all the
entries, it does not exhibit circular rotation invariance. Indeed,
only one out of $n+1$ rotations of an assignment of $n$ cars on a
circle with $n+1$ spots gives a valid parking function. It is thus intriguing what is hidden behind the pair of statistics (leading elements, $1$'s). By casting parking functions in the context of labeled forests, this question will be answered in our paper. We explain the gist of our argument below.

Let $[n]=\{1, 2, \dots,n\}$ and $[n]_0=\{0,1,\dots,n\}$. Let $\calt(n)$ denote the set of all \emph{rooted trees} $T$ on the vertex set $[n]_0$ with root 0. More generally, let $\FC(m, n)$ denote the set of all \emph{rooted forests} $F$ with $n+1$ vertices and $m$ edges (equivalently, $n-m+1$ distinct tree components) such that a specified set of $n-m+1$ vertices are the roots of the different trees. We label the roots of $F$ by $\{01, 02, \dots, 0(n-m+1)\}$ and the non-root vertices by $\{1, 2, \dots, m\}$. The fact that the cardinalities of classical parking functions and of rooted trees are the same, i.e.,
\begin{equation*}
  \vert \PF(n, n) \vert = \vert \calt(n) \vert,
\end{equation*}  
and more generally
\begin{equation*}
  \vert \PF(m, n) \vert = \vert \FC(m, n) \vert,
\end{equation*}  
has motivated much work in the study of connections between the two combinatorial structures. One bijective construction between parking functions and labeled forests goes 
back to Foata and Riordan \cite{Pollak}. Their construction is for the special case $m=n$ and is referred to as a breadth first search (with a queue) on rooted trees. See Yan
\cite[Section 1.2.3]{Yan} and also Chassaing and Marckert \cite{CM}. We will show that under the bijective correspondence induced by breadth first search, the seemingly unrelated leading elements statistic and the $1$'s statistic for classical parking functions both become degree statistics. One of them (the root degree) is classical, while the other (degree of the parent of a fixed vertex) appears to be new.

Generally, statistics based on degrees and other aspects of labeled trees have been studied extensively. A well-known generalization of Cayley's tree theorem includes the degrees of all vertices as additional statistics \cite{HP,Stanley}. Another interesting example is the enumeration of labeled trees with respect to their indegrees: there are two versions of defining indegrees, both leading to the same enumeration formula. In the global orientation (see e.g.~\cite{Stanley}), all edges are oriented towards the root; in the local orientation (see \cite{SZ}), they are oriented towards the higher label. For many more interesting statistics of labeled trees, see for instance \cite{ER} (descents), \cite{MR} (inversions, which were also connected to parking functions in \cite{GL}), or \cite{LP} (runs).

This paper is organized as follows. In Section \ref{sec:pf} we extend the statistic leading elements (denoted $\lel(\pi)$) for classical parking functions that was studied by Stanley and Yin \cite{SY} to general parking functions $\pi \in \PF(m, n)$. We also introduce a new statistic, size of level set (denoted $\slev(\pi)$), for parking functions $\pi \in \PF(m, n)$ that extends the notion of the $1$'s statistic (denoted $\one(\pi)$) for classical parking functions and study the joint distribution of the level set statistic and the leading elements statistic. We establish the generating function for the pair of statistics ($\slev(\pi), \lel(\pi)$) using variations of Pollak's argument. 

In Section~\ref{sec:BFS}, we apply the aforementioned bijection between parking functions and labeled forests that is based on breadth first search. By means of this bijection, we find that the pair of statistics $(\slev(\pi),\lel(\pi))$ on the set of parking functions $\PF(m,n)$ is equidistributed with the pair of statistics $(\deg(0),\deg(p))$ on the set of rooted forests $\FC(m, n)$, where $\deg(0)$ is the root degree of a rooted forest (the total number of children of all roots $01,02,\ldots,0(n-m+1)$), and $\deg(p)$ is the number of children of the parent $p$ of the vertex labeled $1$ (by degree, we generally mean more precisely the number of children of a vertex in a rooted tree, which is $1$ less than the degree in the graph-theoretical sense for non-root vertices).

The pair of statistics $(\deg(0),\deg(p))$ is further considered in Section \ref{sec:forest}. In particular, we directly prove a formula for the number of rooted forests in $\FC(m,n)$ for which $\deg(0)$ and $\deg(p)$ take on given values by means of a combinatorial argument. In the special case $m=n$ (i.e., for labeled trees), the two statistics $\deg(0)$ and $\deg(p)$ also have the same distribution. We provide an explicit bijection for this fact.

In Section \ref{sec:discussions}, we examine the asymptotic properties of the statistics investigated in our paper using standard probabilistic tools.

In Section \ref{sec:rho} we introduce a new bijection on labeled trees, which exchanges the statistics $\deg(0)$ and $\deg(p)$ while preserving the set of children for every other vertex. This bijection leads to a refinement of the symmetric joint distribution of $(\lel(\pi), \one(\pi))$ over classical parking functions.

In the last two sections, we extend our results to $(a, b)$-parking functions. 
The correspondence between parking functions and labeled forests studied earlier in the paper can also be extended to $(a, b)$-parking functions, where the labeled forests are replaced by labeled trees with edge colors. In Section \ref{sec:ab} 
we extend Pollak's proof to a circle with $a+mb$ spots to enumerate $(a,b)$-parking functions of length $m$. This extension gives rise to new and more refined statistics and leads further to the joint distribution of these new statistics. In Section~\ref{sec:rho-ab} we extend the bijection of Section \ref{sec:rho} to explain the symmetries exhibited in those joint distributions.


\section{Statistics on parking functions}
\label{sec:pf}
In this section we investigate the joint distribution of the pair of statistics ($\slev(\pi), \lel(\pi)$) on parking functions $\pi \in \PF(m, n)$. The precise definitions of the individual statistics read as follows:
\begin{itemize}
    \item  Leading elements: total number of cars whose preferred spot is the same as that of the first
car, denoted $\lel(\pi)$. This statistic was recently introduced (in the special case $m=n$) by Stanley and Yin \cite{SY}.
    \item Size of level set: total number of cars whose preferred spot is in the range $\{1, 2, \dots, n-m+1\}$, denoted $\slev(\pi)$. When $m=n$, the level set statistic reduces to the $1$'s statistic for classical parking functions, which counts the total number of cars that prefer spot $1$, often denoted $\one(\pi)$. The level set statistic $\slev(\pi)$ has not been considered before, but the $1$'s statistic $\one(\pi)$ has been widely studied.
\end{itemize}
Our results for $\PF(m, n)$ are extensions of corresponding results for classical parking functions $\PF(n, n)$ in \cite{SY}. As mentioned in the introduction, we will expand upon Pollak's ingenious circle argument \cite{Pollak} for the street parking model to derive our results.

The following lemma was proven before using other means, see for example Kenyon and Yin \cite[Corollary 3.4]{KY}. Our direct combinatorial argument below will shed light on the structure of parking functions
and will be useful in the proof of Theorem~\ref{thm:expl-formula}. As implied by the necessary and sufficient condition~\eqref{pigeon}, changing $\pi_1 = 1$ to any value less than or equal to $n-m+1$ will still keep $\pi$ a parking function. The number of parking functions $\pi \in \PF(m, n)$ with $\pi_1 \in \{1, 2, \dots, n-m+1\}$ is thus $n-m+1$ times the number of parking functions $\pi \in \PF(m, n)$ with
$\pi_1=1$. 

\begin{lemma}\label{simple}
We have
  \begin{equation*}
      \# \{\pi \in \PF(m, n): \pi_1=1\}=(n-m+2)(n+1)^{m-2},   
\end{equation*}
which implies that
\begin{equation*}
      \# \{\pi \in \PF(m, n): \pi_1 \in \{1, 2, \dots, n-m+1\}\}=(n-m+1)(n-m+2)(n+1)^{m-2}.    
\end{equation*}
\end{lemma}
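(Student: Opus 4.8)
The plan is to reduce the first count to a smaller parking‑function count by deleting the leading car. Concretely, I would show that for $\pi\in[n]^m$ with $\pi_1=1$ one has $\pi\in\PF(m,n)$ if and only if $(\pi_2,\dots,\pi_m)\in\PF(m-1,n)$; since prepending $1$ recovers $\pi$, this means $\pi\mapsto(\pi_2,\dots,\pi_m)$ is a bijection from $\{\pi\in\PF(m,n):\pi_1=1\}$ onto $\PF(m-1,n)$. Granting this, the first identity follows from the cardinality formula $\vert\PF(m',n')\vert=(n'-m'+1)(n'+1)^{m'-1}$ applied with $m'=m-1$, $n'=n$, and the second identity is then exactly the consequence already recorded before the statement, namely that the parking functions with $\pi_1\in\{1,\dots,n-m+1\}$ are obtained from those with $\pi_1=1$ by substituting any of the $n-m+1$ permissible values in the first coordinate.

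The equivalence itself I would verify through the characterization~\eqref{pigeon}. Because $\pi_1=1\le i$ for every $i$ in the range appearing there, we have $\#\{k\in[m]:\pi_k\le i\}=1+\#\{k\in\{2,\dots,m\}:\pi_k\le i\}$. Hence~\eqref{pigeon} for $\pi$, i.e.\ $\#\{k\in[m]:\pi_k\le i\}\ge m-n+i$ for $i=n-m+1,\dots,n$, is equivalent to $\#\{k\ge 2:\pi_k\le i\}\ge(m-1)-n+i$ over the same range; restricted to the sub‑range $i=n-(m-1)+1,\dots,n$ this is precisely~\eqref{pigeon} for $(\pi_2,\dots,\pi_m)\in\PF(m-1,n)$, while the single additional constraint at $i=n-m+1$ merely says $\#\{k\in[m]:\pi_k\le n-m+1\}\ge 1$, which holds automatically since $\pi_1=1$. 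This gives both directions, so $\pi\in\PF(m,n)$ iff $(\pi_2,\dots,\pi_m)\in\PF(m-1,n)$, as claimed, and the enumeration follows.

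I do not anticipate a genuine obstacle here; the only point requiring care is matching the index ranges in~\eqref{pigeon}, and the one thing worth stressing is that the deleted‑car bijection lands in $\PF(m-1,n)$ and \emph{not} in $\PF(m-1,n-1)$: removing the first car, which occupies spot $1$, does not remove a spot, which is why $(n+1)^{m-2}$ rather than $n^{m-2}$ appears. A more hands‑on phrasing of the same equivalence is available --- car $1$ parks in spot $1$, and blocking spot $1$ can only push the later cars into weakly later spots, so they all park exactly when $(\pi_2,\dots,\pi_m)$ is a parking function on the full lot of $n$ spots --- but I would keep the inequality argument above, both because it is cleaner and because the identical manipulation of~\eqref{pigeon} will recur in the proof of Theorem~\ref{thm:expl-formula}.
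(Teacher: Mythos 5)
Your proposal is correct and rests on the same reduction as the paper's proof, namely the identity $\#\{\pi\in\PF(m,n):\pi_1=1\}=\vert\PF(m-1,n)\vert$: the paper obtains the count $(n-m+2)(n+1)^{m-2}$ by running Pollak's circle argument on cars $2,\dots,m$ directly (leaving the equivalence ``$\pi\in\PF(m,n)$ iff $(\pi_2,\dots,\pi_m)\in\PF(m-1,n)$'' implicit with the remark that car $1$ always parks), whereas you cite the cardinality formula and verify that equivalence explicitly via the characterization~\eqref{pigeon}. The only cosmetic difference is that the paper treats $m=1$ as a separate trivial case, which you may wish to do as well rather than rely on the formula with $m'=0$.
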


\begin{proof}
The statement is trivially true for $m=1$. For $m\geq 2$, we assign cars $2, \dots, m$ independently on a circle of length
$n+1$. Taking circular rotation into consideration, the car assignments give rise to $(n-m+2)(n+1)^{m-2}$ valid parking functions. Note that car $1$ will always be able to park if its desired spot is spot $1$. Our conclusion readily follows.
\end{proof}

The following lemma allows us to split a parking function in $\PF(m,n)$ into an arbitrary map whose range is precisely the set $\{1,2,\ldots,n-m+1\}$ (that is relevant for the statistic $\slev$) and a parking function on a smaller domain. It will be very useful in proving our results on the distribution of statistics on $\PF(m,n)$.

\begin{lemma}\label{lm:decomp}
Consider a function $\pi: [m] \to [n]$. Fix the elements of $\pi$ that are equal to one of $1, 2, \dots, n-m+1$, and suppose that there are $s\geq 0$ such elements (this is precisely $\slev(\pi)$). Let the other elements be $\pi_{j_1} < \pi_{j_2} < \cdots < \pi_{j_{m-s}}$, and define a new function $\tilde{\pi}: [m-s] \to [m-1]$ by $\tilde{\pi}_i = \pi_{j_i} - (n-m+1)$. Then $\pi$ is a parking function in $\PF(m,n)$ if and only if $\tilde{\pi}$ is a parking function in $\PF(m-s,m-1)$.
\end{lemma}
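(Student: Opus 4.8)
The plan is to verify the defining inequalities \eqref{pigeon} for $\pi$ and for $\tilde\pi$ and show they match up term by term. First I would recall that by \eqref{pigeon}, $\pi \in \PF(m,n)$ if and only if $\#\{k : \pi_k \le i\} \ge m-n+i$ for all $i$ in the range $n-m+1 \le i \le n$. For $i = n-m+1$ this condition reads $\#\{k : \pi_k \le n-m+1\} \ge 1$, which is automatic as soon as $n \ge m$ unless $s = 0$ in a degenerate case — in any event it does not constrain the ``tail'' of $\pi$. The key point is that for $i \ge n-m+1$, every one of the $s$ fixed small entries (those lying in $\{1,\dots,n-m+1\}$) is counted in $\#\{k : \pi_k \le i\}$, so that $\#\{k : \pi_k \le i\} = s + \#\{t : \pi_{j_t} \le i\}$.

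Next I would substitute the change of variables $\tilde\pi_t = \pi_{j_t} - (n-m+1)$, which biject the values $\{n-m+2,\dots,n\}$ available to the large entries of $\pi$ with the values $\{1,\dots,m-1\}$ available to $\tilde\pi$. Writing $i = j + (n-m+1)$ with $1 \le j \le m-1$, the inequality $\pi_{j_t} \le i$ becomes $\tilde\pi_t \le j$, so $\#\{t : \tilde\pi_t \le j\} = \#\{k : \pi_k \le i\} - s$. The target bound $m-n+i$ becomes $m - n + j + (n-m+1) = j+1$. Thus the condition on $\pi$ at level $i$, namely $\#\{k:\pi_k\le i\}\ge m-n+i$, is equivalent to $\#\{t:\tilde\pi_t\le j\}\ge j+1-s$. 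On the other hand, the condition for $\tilde\pi\in\PF(m-s,m-1)$ at level $j$ reads $\#\{t:\tilde\pi_t\le j\}\ge (m-s)-(m-1)+j = j+1-s$, which is exactly the same inequality. Since the map $i\mapsto j=i-(n-m+1)$ carries the range $n-m+2\le i\le n$ bijectively onto $1\le j\le m-1$, the two families of inequalities coincide.

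It remains to handle the one boundary value and the domains/codomains. For $i = n-m+1$ the $\pi$-condition is $s = \#\{k:\pi_k \le n-m+1\} \ge 1$; I would note that if $s = 0$ then $\tilde\pi$ has domain $[m]$ and must map into $[m-1]$, so by pigeonhole some value of $\tilde\pi$ is $\le m-1$ is automatic but the real obstruction is whether $\tilde\pi$ is a valid parking function at all — and indeed when $s=0$ the $j=1$ inequality for $\tilde\pi\in\PF(m,m-1)$ demands $\#\{t:\tilde\pi_t\le 1\}\ge 1$, i.e.\ some entry of $\pi$ equals $n-m+2$, which is precisely the $i=n-m+2$ instance already covered; meanwhile $\pi\in\PF(m,n)$ with $s=0$ is itself impossible when... —here I should simply observe that the $i=n-m+1$ row gives no constraint beyond $s\ge$ something trivial, and that $\tilde\pi$ having the correct number of cars ($m-s$) and spots ($m-1$) with $m-s \le m-1$ forced by $s\ge 1$ is part of the equivalence; when $s=0$, $\PF(m,m-1)$ is empty precisely because no such parking function exists, matching the fact that $\pi$ with $s=0$ satisfies the $i=n-m+1$ inequality vacuously but then fails... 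Let me instead just check directly: the statement is that $\pi$ and $\tilde\pi$ are simultaneously parking functions, and I expect the cleanest route is to enumerate the inequality at each $i$ and observe the perfect correspondence, treating $i=n-m+1$ separately as the (possibly vacuous or automatically satisfied) base case.

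The main obstacle I anticipate is bookkeeping at the two endpoints of the index range — making sure the $i = n-m+1$ condition and the requirement that $\tilde\pi$ land in $[m-1]$ (rather than overshooting) are correctly accounted for, and cleanly addressing the edge case $s = 0$ (where $\PF(m,m-1) = \emptyset$, consistent with the fact that a parking function in $\PF(m,n)$ must hit the value set $\{1,\dots,n-m+1\}$). Once the index shift $i \leftrightarrow i-(n-m+1)$ is set up, the core of the argument is the single algebraic identity $m-n+i = (j+1-s) + s$ with $j = i-(n-m+1)$, after which the equivalence of the two systems of inequalities is immediate.
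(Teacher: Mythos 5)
Your proposal is correct and follows essentially the same route as the paper: both verify that the defining inequalities \eqref{pigeon} for $\pi$ and for $\tilde{\pi}$ match up under the shift $i \mapsto i-(n-m+1)$, via the identity $\#\{k: \pi_k \leq i\} = s + \#\{t: \tilde{\pi}_t \leq i-(n-m+1)\}$. The only part of your write-up that needs tightening is the boundary discussion, which currently trails off: the clean observation (made explicitly in the paper) is that for \emph{every} shifted index $j < s$ --- not just $j=0$ --- the inequality $\#\{t: \tilde{\pi}_t \leq j\} \geq j+1-s$ is vacuously true, which at once disposes of the $i=n-m+1$ row, reconciles the index ranges (the conditions defining $\PF(m-s,m-1)$ only start at $j=s$), and shows that when $s=0$ both sides fail together since $\PF(m,m-1)=\emptyset$.
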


\begin{remark}
    For $s=0$, there is no valid parking function in view of~\eqref{pigeon}. This is consistent with the fact that $\PF(m,m-1)$ is (trivially) empty.
\end{remark}

\begin{proof}
We make use of the characterization~\eqref{pigeon} of parking functions.
For any $i > m-n$, we have
\begin{equation*}
\#\{k: \pi_k \leq i\} = s + \#\{k: \tilde{\pi}_k \leq i - (n-m+1)\}.      
\end{equation*}
So the condition in~\eqref{pigeon} is equivalent to 
\begin{equation*}
\#\{k: \tilde{\pi}_k \leq i - (n-m+1)\} \geq m-n+i-s
\end{equation*}
for $i=m-n+1,m-n+2,\ldots,n$. Substituting $h = i - (n-m+1)$, this becomes
\begin{equation}\label{eq:modified_cond}
\#\{k: \tilde{\pi}_k \leq h\} \geq h+1-s
\end{equation}
for $h=0,1,\ldots,m-1$. This is precisely the condition for a parking function in $\PF(m-s,m-1)$, except for one detail: the conditions start at $h=0$ rather than $h=s$. However, for $h < s$,~\eqref{eq:modified_cond} is trivially satisfied. This completes the proof.
\end{proof}

Lemma~\ref{lm:decomp} means that every parking function in $\PF(m,n)$ can be uniquely decomposed into an arbitrary function $\pi_a: A \to [n-m+1]$ on a set $A \subseteq [m]$
of cardinality $s$ and a function that is equivalent to a parking function $\pi_p$ in $\PF(m-s,m-1)$.

As a consequence of the decomposition in Lemma~\ref{lm:decomp}, we will now be able to prove results on the distributions of the statistics $\slev(\pi)$ and $\lel(\pi)$. We start with the joint distribution of $\slev(\pi)$ and $\lel(\pi)$, which is determined by the following theorem.

\begin{theorem}\label{thm:expl-formula}
Let $s, t \geq 1$. We have
\begin{multline*} 
\#\{\pi \in \PF(m, n)\colon  \slev(\pi)=s \text{ and } \lel(\pi)=t\}\\=\binom{m-2}{s-1, t-1, m-s-t} (n-m+1)^{s} (m-1)^{m-s-t+1}\\+\binom{m-1}{t-1, s-t, m-s} s (n-m+1) (n-m)^{s-t} m^{m-s-1}. 
\end{multline*}
\end{theorem}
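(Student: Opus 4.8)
The plan is to combine the decomposition of Lemma~\ref{lm:decomp} with a refined version of Pollak's circle argument, splitting the count according to whether or not the first car's preference $\pi_1$ lies in the ``level range'' $\{1,\dots,n-m+1\}$. These two cases will produce the two summands of the formula, in reverse order: the case $\pi_1 \le n-m+1$ gives the second summand, and the case $\pi_1 > n-m+1$ gives the first.

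As a preliminary step I would establish the one-variable distribution of $\lel$ on arbitrary parking functions, namely that for $1 \le t \le M$,
\[
\#\{\rho \in \PF(M,N)\colon \lel(\rho)=t\} = (N-M+1)\binom{M-1}{t-1}N^{M-t}.
\]
This follows from a refinement of Pollak's argument: assign cars $1,\dots,M$ independently among the $N+1$ spots of the circle; each of the $(N+1)^M$ assignments yields exactly $N-M+1$ valid parking functions under circular rotation, and the number of cars occupying car $1$'s chosen spot (which is exactly $\lel(\rho)$ for the resulting parking function $\rho$, since it is unchanged by rotation) is a property of the assignment alone. As there are $(N+1)\binom{M-1}{t-1}N^{M-t}$ assignments with exactly $t$ cars on car $1$'s spot, dividing by $N+1$ and multiplying by $N-M+1$ gives the claim; summing over $t$ recovers $(N-M+1)(N+1)^{M-1}$ as a consistency check.

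For the main count, fix $\pi \in \PF(m,n)$ with $\slev(\pi)=s$ and set $A = \{j\colon \pi_j \le n-m+1\}$, so $|A|=s$. By Lemma~\ref{lm:decomp} — applied with the relabeling of $[m]\setminus A$ that preserves the order of the \emph{car indices}, which gives the same notion of parking function as the value-ordered relabeling by permutation symmetry — the function $\pi$ is recovered uniquely from the triple $(A,\ \pi_a\colon A\to[n-m+1],\ \pi''\in\PF(m-s,m-1))$. In Case~1, $1\in A$: then every car matching $\pi_1$ is ``small'', so $\lel(\pi)$ equals the number of $j\in A$ with $\pi_a(j)=\pi_a(1)$. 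Choosing $A\ni 1$ ($\binom{m-1}{s-1}$ ways), the value $\pi_a(1)=v$ ($n-m+1$ ways), the $t-1$ further elements of $A$ mapped to $v$ ($\binom{s-1}{t-1}$ ways), the remaining $s-t$ values in $[n-m+1]\setminus\{v\}$ ($(n-m)^{s-t}$ ways), and $\pi''$ arbitrary in $\PF(m-s,m-1)$ ($s\,m^{m-s-1}$ by the product formula) gives, after the identity $\binom{m-1}{s-1}\binom{s-1}{t-1}=\binom{m-1}{t-1,\,s-t,\,m-s}$, exactly the second summand. In Case~2, $1\notin A$: then car $1$ is the first car of $\pi''$, and since the value shift and the order-preserving relabeling preserve coincidences of preferences, $\lel(\pi)=\lel(\pi'')$. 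Choosing $A\not\ni 1$ ($\binom{m-1}{s}$ ways), $\pi_a$ arbitrary ($(n-m+1)^s$ ways), and $\pi''\in\PF(m-s,m-1)$ with $\lel(\pi'')=t$ (by the preliminary with $M=m-s$, $N=m-1$: $s\binom{m-s-1}{t-1}(m-1)^{m-s-t}$ ways) gives, after the identity $\binom{m-1}{s}\,s\,\binom{m-s-1}{t-1}=(m-1)\binom{m-2}{s-1,\,t-1,\,m-s-t}$, exactly the first summand. Adding the two cases completes the proof.

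The main obstacle is the bookkeeping around the decomposition rather than any hard estimate: one must verify that the relabeling in Lemma~\ref{lm:decomp} may be chosen so as to fix the distinguished role of car $1$, and that $\lel$ is faithfully transported in each case (immediate in Case~1 since all matching cars are small, and in Case~2 since the first car maps to the first car and the shift $x\mapsto x-(n-m+1)$ is injective). The two binomial simplifications, and the handling of degenerate ranges of $s,t$ (where the multinomial coefficients vanish), are routine.
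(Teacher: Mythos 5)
Your proposal is correct and follows essentially the same route as the paper: the same decomposition via Lemma~\ref{lm:decomp} into the set $A$, the map $\pi_a$, and a parking function in $\PF(m-s,m-1)$, with the same two-case split according to whether $\pi_1 \le n-m+1$, and the same Pollak-refinement to count the $\lel$-distribution of the smaller parking function. The only cosmetic difference is that you package the one-variable $\lel$-count over $\PF(M,N)$ as an explicit preliminary (which is the paper's Corollary~\ref{pf1}, proved the same way), whereas the paper runs that binomial argument inline.
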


We will revisit this formula later in the context of rooted forests. The generating function for the joint distribution of $\slev(\pi)$ and $\lel(\pi)$ is obtained in a straightforward fashion by summing over all $s$ and $t$.

\begin{corollary}
\label{cor:pf1}
\begin{multline}\label{master:pf1}
\sum_{\pi \in \PF(m, n)} x^{\slev(\pi)} y^{\lel(\pi)}
=(n-m+1)xy\left[(m-1)((n-m+1)x+y+m-1)^{m-2}\right.\\\left.+(xy+(n-m)x+1)(xy+(n-m)x+m)^{m-2}\right].
\end{multline}
\end{corollary}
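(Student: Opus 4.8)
The plan is to treat Theorem~\ref{thm:expl-formula} as a black box giving the coefficient count for each pair $(s,t)$, multiply by $x^s y^t$, and sum over all $s,t \ge 1$. First I would observe that this sum really is the full generating function: by~\eqref{pigeon} with $i = n-m+1$, every $\pi \in \PF(m,n)$ satisfies $\#\{k : \pi_k \le n-m+1\} \ge 1$, so $\slev(\pi) \ge 1$, while $\lel(\pi) \ge 1$ holds trivially since the first car is always counted. Moreover the multinomial coefficients in the formula vanish automatically whenever one of their lower arguments is negative, so extending the summation to all $s,t \ge 1$ introduces no spurious terms. Thus $\sum_{\pi} x^{\slev(\pi)} y^{\lel(\pi)}$ equals the sum over $s,t \ge 1$ of the two-term expression from Theorem~\ref{thm:expl-formula}, which I would split into two pieces $S_1$ and $S_2$ corresponding to the two summands.

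For $S_1$, the piece coming from $\binom{m-2}{s-1,t-1,m-s-t}(n-m+1)^s(m-1)^{m-s-t+1}$, I would reindex by $s \mapsto s-1$ and $t \mapsto t-1$. This pulls out a factor $xy$, a factor $n-m+1$ from $(n-m+1)^s$, and a factor $m-1$ from $(m-1)^{m-s-t+1}$, leaving $\binom{m-2}{s-1,t-1,m-s-t}$ paired with $((n-m+1)x)^{s-1}y^{t-1}(m-1)^{m-s-t}$. The trinomial theorem then collapses this to $((n-m+1)x + y + m-1)^{m-2}$, giving $S_1 = (n-m+1)(m-1)xy\,((n-m+1)x+y+m-1)^{m-2}$, which matches the first bracketed term.

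The hard part will be $S_2$, the piece from $\binom{m-1}{t-1,s-t,m-s}\,s\,(n-m+1)(n-m)^{s-t}m^{m-s-1}$, because of the extra factor $s$ that blocks a direct trinomial evaluation. The key manipulation I would use is to first regroup the monomials as $(n-m)^{s-t}x^s y^t = ((n-m)x)^{s-t}(xy)^t$, then set $a=t-1$, $b=s-t$, $c=m-s$ so that $a+b+c=m-1$ and, crucially, $s = m-c$ and $m^{m-s-1}=m^{c-1}$. Writing the obstructing factor as $s = m - c$ splits $S_2$ into two trinomial sums: the $m$-part gives $m(xy+(n-m)x+m)^{m-1}$, while for the $c$-part I would apply the absorption identity $c\binom{m-1}{a,b,c} = (m-1)\binom{m-2}{a,b,c-1}$ to lower the degree, producing $(m-1)m(xy+(n-m)x+m)^{m-2}$. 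Factoring out the common power $(xy+(n-m)x+m)^{m-2}$ and simplifying $(xy+(n-m)x+m)-(m-1)$ to $xy+(n-m)x+1$ then yields $S_2 = (n-m+1)xy\,(xy+(n-m)x+1)(xy+(n-m)x+m)^{m-2}$, exactly the second bracketed term.

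Adding $S_1$ and $S_2$ and extracting the common prefactor $(n-m+1)xy$ reproduces~\eqref{master:pf1}. The only genuine subtlety is the treatment of the factor $s$ in $S_2$; an alternative would be to generate it via the operator $x\partial_x$ acting on a master trinomial sum, but the substitution $s = m-c$ together with the absorption identity for $c\binom{m-1}{a,b,c}$ keeps the bookkeeping cleaner, so I would proceed that way.
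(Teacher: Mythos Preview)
Your proposal is correct and follows essentially the same approach as the paper: both obtain the corollary by summing the explicit counts of Theorem~\ref{thm:expl-formula} over $s$ and $t$, treating the two summands separately. The only technical difference is in packaging: the paper first sums over $t$ via the binomial theorem (obtaining factors like $y(m-1+y)^{m-s-1}$ and $y(n-m+y)^{s-1}$) and then over $s$, whereas you sum over both indices at once via the trinomial theorem. The troublesome factor $s$ in the second piece is handled the same way in both arguments---by writing $s=m-(m-s)$ and applying an absorption identity---though the paper leaves this step implicit while you spell it out. Your observation that $\slev(\pi)\ge 1$ follows from~\eqref{pigeon} and that the multinomials vanish for negative arguments is a useful sanity check that the paper takes for granted.
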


\begin{proof}[Proof of Theorem~\ref{thm:expl-formula}]
Let us count parking functions $\pi \in \PF(m,n)$ for which $\slev(\pi) = s$. Lemma~\ref{lm:decomp} shows that we can decompose $\pi$ into an arbitrary function $\pi_a$ from $A$ to $[n-m+1]$, where $|A| = s$ and $A \subseteq [m]$, and a (function equivalent to a) parking function $\pi_p$ in $\PF(m-s,m-1)$. For $\lel(\pi)$, we distinguish two cases:
\begin{itemize}
\item The spot of the first car does not lie in $\{1,2,\ldots,n-m+1\}$. In this case, $1 \notin A$, and the value of $\lel(\pi)$ is determined by the function $\pi_p$. Recall that by Pollak's argument, for every possible map from $[m-s]$ to $[m]$, there are $s$ possible rotations that will turn it into a parking function in $\PF(m-s,m-1)$. Thus in a randomly chosen parking function, each car (other than the first) takes the same spot as car $1$ with the same probability $\frac{1}{m}$, and all the cars are independent. So $\lel(\pi)$ follows a binomial distribution in this case, and there are $s \binom{m-s-1}{t-1} (m-1)^{m-s-t}$ possibilities for $\pi_p$ such that $\lel(\pi) = t$. Since there are $\binom{m-1}{s}$ choices for the set $A$ (the domain of $\pi_a$) and $(n-m+1)^s$ choices for the function $\pi_a$ itself, we obtain a total contribution of
\begin{multline*}
\binom{m-1}{s} \cdot (n-m+1)^s \cdot s \binom{m-s-1}{t-1} (m-1)^{m-s-t} \\
= \binom{m-2}{s-1, t-1, m-s-t} (n-m+1)^{s} (m-1)^{m-s-t+1}.    
\end{multline*}
The corresponding generating function is
\begin{multline*}
    \sum_{s=1}^{m} \binom{m-1}{s} (n-m+1)^s y s(m-1+y)^{m-s-1} x^s \\ = (n-m+1)xy (m-1) ((n-m+1)x + y + m - 1)^{m-2}.
\end{multline*}
\item The spot of the first car lies in $\{1,2,\ldots,n-m+1\}$. Then $1 \in A$, and $\lel(\pi)$ is completely determined by the function $\pi_a$. Each element of $A \setminus \{1\}$ has the same probability $\frac{1}{n-m+1}$ of being mapped to the same element as car $1$ by $\pi_a$, and all these elements are independent. So given $s$, $\lel(\pi)$ follows a binomial distribution in this case as well, and given $\pi_a(1)$, there are $\binom{s-1}{t-1}(n-m)^{s-t}$ possibilities for the map $\pi_a$ such that $\lel(\pi) = t$. There are now $\binom{m-1}{s-1}$ choices for the set $A$, $n-m+1$ choices for the spot of the first car, and $sm^{m-s-1}$ possible choices for $\pi_p$, so this case yields a contribution of
\begin{multline*}
    \binom{s-1}{t-1}(n-m)^{s-t} \cdot \binom{m-1}{s-1} (n-m+1) \cdot s m^{m-s-1} \\= \binom{m-1}{t-1, s-t, m-s} s (n-m+1) (n-m)^{s-t} m^{m-s-1}.
\end{multline*}
The generating function associated with this case is
\begin{multline*}
    \sum_{s=1}^{m} \binom{m-1}{s-1} y (n-m+1)(n-m+y)^{s-1}sm^{m-s-1} x^s \\
    = (n-m+1) xy (xy + (n-m)x + m)^{m-2} (xy + (n-m)x + 1).
\end{multline*}
\end{itemize}
Combining the two contributions, we obtain Theorem~\ref{thm:expl-formula} and Corollary~\ref{cor:pf1}.
\end{proof}

Specializing the generating function by setting $x=1$ or $y=1$, we immediately obtain the distributions of $\lel(\pi)$ and $\slev(\pi)$. These are given in the following two corollaries.

\begin{corollary}\label{pf1}
Taking $x=1$ in \eqref{master:pf1}, we have
\begin{equation*}
\sum_{\pi \in \PF(m, n)} y^{\lel(\pi)}=(n-m+1)y(y+n)^{m-1}.
\end{equation*}
\end{corollary}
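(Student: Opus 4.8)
The statement is, by design, a specialization of the master identity in Corollary~\ref{cor:pf1}, so the plan is simply to set $x=1$ in~\eqref{master:pf1} and watch the two bracketed terms collapse. With $x=1$ the arguments of both $(m-2)$-th powers become $y+n$, since $(n-m+1)+y+(m-1)=y+n$ and $y+(n-m)+m=y+n$, while the remaining linear factor in the second summand becomes $y+(n-m)+1=y+n-m+1$. Thus
\begin{equation*}
\sum_{\pi \in \PF(m,n)} y^{\lel(\pi)} = (n-m+1)\,y\Bigl[(m-1)(y+n)^{m-2} + (y+n-m+1)(y+n)^{m-2}\Bigr].
\end{equation*}
Factoring out $(y+n)^{m-2}$ and using $(m-1)+(y+n-m+1)=y+n$, the bracket equals $(y+n)^{m-1}$, which is exactly the claimed formula $(n-m+1)y(y+n)^{m-1}$. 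The one point requiring a moment of care is the degenerate case $m=1$, where the $(m-2)$-th powers are to be read with the usual formal convention (or, more honestly, one just checks by hand that $\PF(1,n)$ consists of $n$ preference lists, each with $\lel=1$, contributing $ny$, in agreement with the formula).

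An alternative and arguably more transparent route avoids Theorem~\ref{thm:expl-formula} altogether and argues directly from Pollak's circle construction. Place $m$ cars on a circle with $n+1$ spots, i.e.\ pick $c\in[n+1]^m$, and set $L(c):=\#\{i:c_i\equiv c_1\}$. This quantity is unchanged when all coordinates are shifted by the same amount, hence is constant on each of the $(n+1)^{m-1}$ orbits of the cyclic action, and it coincides with $\lel(\pi)$ for every parking function $\pi$ lying in that orbit (such a $\pi$ is literally a shift of $c$, and no wrap-around occurs). Since each orbit has size $n+1$ and contains exactly $n-m+1$ parking functions (Pollak's lemma, as recounted in the introduction), grouping parking functions by orbit gives
\begin{equation*}
\sum_{\pi\in\PF(m,n)} y^{\lel(\pi)} = \frac{n-m+1}{n+1}\sum_{c\in[n+1]^m} y^{L(c)}.
\end{equation*}
In the sum on the right, car $1$ always contributes a factor $y$, and each of cars $2,\dots,m$ contributes $y$ if it lands on $c_1$ and $1$ among the other $n$ choices, i.e.\ a factor $y+n$; together with the $n+1$ choices for $c_1$ this yields $(n+1)\,y\,(y+n)^{m-1}$, and the identity follows.

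There is essentially no serious obstacle: the first route is a one-line algebraic substitution whose only risk is a bookkeeping slip in the collapsing linear factors (plus the $m=1$ boundary case), and the second route needs only the two standard facts that the cyclic action has full orbits and meets $\PF(m,n)$ in $n-m+1$ points, together with the rotation-invariance of $\lel$ already observed in the introduction. I would present the substitution as the proof and perhaps remark on the direct argument as a sanity check.
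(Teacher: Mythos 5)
Your primary argument is exactly the paper's proof: Corollary~\ref{pf1} is obtained there simply by setting $x=1$ in~\eqref{master:pf1}, and your algebra collapsing both $(m-2)$-th powers to $(y+n)^{m-2}$ and summing the linear factors to $y+n$ is correct. The alternative Pollak-orbit argument you sketch as a sanity check is also sound (and is close in spirit to how the paper derives the master formula itself), but for this corollary the substitution is all that is needed.
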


\begin{corollary}\label{pf2}
Taking $y=1$ in \eqref{master:pf1}, we have
\begin{equation*}
\sum_{\pi \in \PF(m, n)} x^{\slev(\pi)}=(n-m+1)x\left((n-m+1)x+m\right)^{m-1}.
\end{equation*}
\end{corollary}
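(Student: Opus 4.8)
The plan is to obtain the identity in Corollary~\ref{pf2} as the $y=1$ specialization of the master generating function of Corollary~\ref{cor:pf1}, and then to check that the resulting polynomial in $x$ telescopes into the stated closed form. First I would substitute $y=1$ into~\eqref{master:pf1} and simplify the linear expressions that appear. Writing $u=(n-m+1)x$ for brevity, one has, at $y=1$, $(n-m+1)x+y+m-1=u+m$, $\;xy+(n-m)x+m=u+m$, and $\;xy+(n-m)x+1=u+1$, while the prefactor $(n-m+1)xy$ becomes $(n-m+1)x=u$. Hence the bracket in~\eqref{master:pf1} collapses to
\[
(m-1)(u+m)^{m-2}+(u+1)(u+m)^{m-2}=\bigl[(m-1)+(u+1)\bigr](u+m)^{m-2}=(u+m)^{m-1},
\]
and multiplying by $u$ gives $u(u+m)^{m-1}=(n-m+1)x\bigl((n-m+1)x+m\bigr)^{m-1}$, exactly the asserted formula.

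Alternatively — and this is the check I would run to be safe — one can read the distribution of $\slev$ off the decomposition in Lemma~\ref{lm:decomp} directly, without going through the joint formula. A parking function $\pi\in\PF(m,n)$ with $\slev(\pi)=s$ is specified by the $s$-subset $A\subseteq[m]$ of positions whose value lies in $[n-m+1]$ ($\binom{m}{s}$ choices), an arbitrary map $A\to[n-m+1]$ ($(n-m+1)^{s}$ choices), and a parking function in $\PF(m-s,m-1)$, of which there are $s\,m^{m-s-1}$ by the product formula $|\PF(k,\ell)|=(\ell-k+1)(\ell+1)^{k-1}$. Therefore
\[
\sum_{\pi\in\PF(m,n)}x^{\slev(\pi)}=\sum_{s=1}^{m}\binom{m}{s}(n-m+1)^{s}\,s\,m^{m-s-1}\,x^{s},
\]
and applying the identity $s\binom{m}{s}=m\binom{m-1}{s-1}$, reindexing by $j=s-1$, and using the binomial theorem turns the right-hand side into $u\sum_{j=0}^{m-1}\binom{m-1}{j}u^{j}m^{m-1-j}=u(u+m)^{m-1}$, the same answer. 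At $x=1$ this also reproduces the total count $|\PF(m,n)|=(n-m+1)(n+1)^{m-1}$, a further sanity check.

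I do not expect a genuine obstacle here, since both routes are elementary algebra; the one point that deserves a moment's attention is the bottom term of the sum. Theorem~\ref{thm:expl-formula} is stated only for $s,t\ge1$, so in the generating-function route one should confirm that $s=0$ contributes nothing, which is consistent with the remark after Lemma~\ref{lm:decomp} that $\PF(m,m-1)=\emptyset$ and with the absence of a constant term in $(n-m+1)x((n-m+1)x+m)^{m-1}$. For the writeup I would keep the one-line substitution $y=1$ in~\eqref{master:pf1} as the actual proof, exactly as the statement suggests, noting only that the symmetric substitution $x=1$ yields Corollary~\ref{pf1} and that the combinatorial count above serves as the independent verification.
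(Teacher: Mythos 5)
Your proof is correct and follows the paper's own route: the corollary is obtained simply by substituting $y=1$ into \eqref{master:pf1} and observing that the bracket collapses to $((n-m+1)x+m)^{m-1}$, exactly as you compute. The additional combinatorial verification via Lemma~\ref{lm:decomp} is a nice sanity check but is not needed; the paper treats this as an immediate specialization.
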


Observe that the generating functions in Corollaries~\ref{pf1} and~\ref{pf2} are identical for $m = n$ (up to renaming the variable). In this case, $\slev(\pi)$ becomes the statistic $\one(\pi)$ (number of $1$'s in the parking function).

\section{Breadth first search}
\label{sec:BFS}

In this section we explore the implications of the breadth first search (BFS) algorithm connecting parking functions $\PF(m, n)$ and rooted forests $\FC(m, n)$. This allows us to transfer results from parking functions to forests. Our construction will extend the corresponding construction between classical parking functions $\PF(n, n)$ and rooted trees $\calt(n)$ by Foata and Riordan \cite[Section 3]{Pollak}. That our construction is a bijection may be similarly argued as in \cite{Pollak}, with minor adaptations. We will not go over all the technical details here, but will provide the explicit formulas for the generalized construction and illustrate the correspondence with a concrete example.

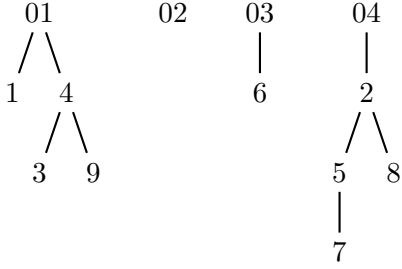
\begin{figure}[htbp]
\begin{center}
\begin{forest}
[01, baseline, [1, edge={thick}], [4, edge={thick} [3, edge={thick}][9, edge={thick}]]]
\end{forest}
\quad
\begin{forest}
[02, baseline]
]
\end{forest}
\quad
\begin{forest}
[03, baseline, [6, edge={thick}]
]
\end{forest}
\quad
\begin{forest}[04, baseline, [2, edge={thick}, [5, edge={thick}, [7, edge={thick}]][8, edge={thick}]]
]
\end{forest}
\end{center}
\caption{\label{illustration1}
Rooted spanning forest.}
\end{figure}

A forest $F \in \FC(m, n)$ may be represented by an acyclic function $f$, where for a non-root vertex $i$, $f_i=j$ indicates that vertex $j$ is the parent of vertex $i$ in a tree component of the forest. Take $m=9$ and $n=12$. See Figure \ref{illustration1} representing an element of $F \in \FC(9,12)$, which corresponds to the acyclic function $f$ given below:
\begin{equation}
\begin{tabular}{ccccccccccc}
$i$ & $=$ & 1 & 2 & 3 & 4 & 5 & 6 & 7 & 8 & 9 \\
$f_i$ & $=$ & 01 & 04 & 4 & 01 & 2 & 03 & 5 & 2 & 4
\end{tabular}.
\label{acyclic}
\end{equation}
We read the vertices of the forest in breadth first search (BFS) order. That is, read root vertices in order first, then all vertices at level $1$ (children of a root), then those at level $2$ (distance $2$ from a root), and so on, where vertices at a given level are naturally ordered in order of increasing predecessor, and, if they have the same predecessor, increasing order. See \cite[Section 1.2.3]{Yan} for a description of this graph searching algorithm in the language of computer science. Applying BFS to the forest $F$ in Figure \ref{illustration1}, we have
\begin{equation*}
v_{01},\dots, v_{04}, v_5, \dots, v_{13} = 01,02,03,04,1,4,6,2,3,9,5,8,7.
\end{equation*}
We let $\sigma^{-1}_f$ be the vertex ordering once we remove the root vertices and $\sigma_f$ be the \emph{inverse order permutation} of $\sigma^{-1}_f$.
\begin{equation*}
\begin{tabular}{ccccccccccc}
$i$ & $=$ & 1 & 2 & 3 & 4 & 5 & 6 & 7 & 8 & 9 \\
$\sigma^{-1}_f(i)$ & $=$ & 1 & 4 & 6 & 2 & 3 & 9 & 5 & 8 & 7 \\
$\sigma_f(i)$ & $=$ & 1 & 4 & 5 & 2 & 7 & 3 & 9 & 8 & 6
\end{tabular}.
\end{equation*}
We further let $t(f)=(r_1, \dots, r_{12})$ with $r_i$ recording the degree of $v_i$, starting with $v_{01}$ and ending with $v_{12}$ (ignoring the final vertex $v_{13}$), that is,
\[t(f)=(2, 0, 1, 1, 0, 2, 0, 2, 0, 0, 1, 0).\]
The sequence $t(f)$ is referred to as the \emph{forest specification} of $F$.

Via the breadth first search, a generic forest $F \in \FC(m, n)$ may thus be uniquely characterized by its associated specification $t(f)$ and order permutation $\sigma_f$. Furthermore, the pair $(t(f), \sigma_f)$ must satisfy certain \emph{balance} and \emph{compatibility} conditions. For exact definitions of these conditions, see~\cite[Section 2.2]{KY} and the references therein. Indeed, if we let $\mathscr{C}(m, n)$ be the set of all feasible pairs, then $\mathscr{C}(m, n)$ is in one-to-one correspondence with $\FC(m, n)$. It turns out that $\mathscr{C}(m, n)$ is also in one-to-one correspondence with the set of parking functions $\PF(m, n)$, which we now describe.

{For a parking function $\pi \in \PF(m, n)$, the associated \emph{specification} is $s(\pi)=(\#1(\pi), \dots, \#n(\pi))$, where $\#k(\pi)=\#\{i: \pi_i=k\}$ records the number of cars whose parking preference is spot $k$. The \emph{order permutation} $\tau_\pi \in \Sym_m$, on the other hand, is defined by
$\tau_\pi(i)=\# \{j: \pi_j<\pi_i, \text{ or } \pi_j=\pi_i \text{ and } j\leq i\}$,
and so is the permutation that orders the list, without switching elements that are the same. In words, $\tau_\pi(i)$ is the position of the entry $\pi_i$ in the non-decreasing rearrangement of $\pi$. For example, for $\pi=(3, 1, 3, 1)$, $\tau_\pi(1)=3$, $\tau_\pi(2)=1$, $\tau_\pi(3)=4$, and $\tau_\pi(4)=2$. We can easily recover a parking function $\pi$ by replacing $i$ in $\tau_\pi$ with the $i$-th smallest term in the sequence $1^{\#1}\dots n^{\#n}$. As in the case of rooted forests, all feasible pairs $(s(\pi), \tau_\pi)$ for parking functions constitute the set $\mathscr{C}(m, n)$.}

Combining the above perspectives, we see that the breadth first search algorithm bijectively connects parking functions and rooted forests, where $(t(f), \sigma_f)=(s(\pi), \tau_\pi)$. Continuing with our earlier example, for the forest $F \in \FC(9, 12)$ in Figure \ref{illustration1} with acyclic function representation $f$ given by \eqref{acyclic}, we have
\[s(\pi)=(2, 0, 1, 1, 0, 2, 0, 2, 0, 0, 1, 0),\]
and
\begin{equation*}
\begin{tabular}{ccccccccccc}
$i$ & $=$ & 1 & 2 & 3 & 4 & 5 & 6 & 7 & 8 & 9 \\
$\tau^{-1}_\pi(i)$ & $=$ & 1 & 4 & 6 & 2 & 3 & 9 & 5 & 8 & 7 \\
$\tau_\pi(i)$ & $=$ & 1 & 4 & 5 & 2 & 7 & 3 & 9 & 8 & 6
\end{tabular}.
\end{equation*}
We form the non-decreasing rearrangement sequence associated with $s(\pi)$: 
\begin{equation*}
1^2, 3^1, 4^1, 6^2, 8^2, 11^1=1, 1, 3, 4, 6, 6, 8, 8, 11.
\end{equation*}
Replacing $i$ in $\tau_\pi$ with the $i$-th smallest term in this sequence yields the corresponding parking function $\pi \in \PF(9, 12)$ given below:
\begin{equation*}
\begin{tabular}{ccccccccccc}
$i$ & $=$ & 1 & 2 & 3 & 4 & 5 & 6 & 7 & 8 & 9 \\
$\pi_i$ & $=$ & 1 & 4 & 6 & 1 & 8 & 3 & 11 & 8 & 6
\end{tabular}.
\end{equation*}

{This bijective construction (let us name it $\phi$) from rooted forests to parking functions has some interesting implications that are listed in the following theorem.}

\begin{theorem}\label{thm:correspondence}
The following statistics are equidistributed:
\begin{itemize}
\item The number of times $\pi_i$ appears in a parking function $\pi \in \PF(m, n)$ equals the degree of the parent of vertex $i$ in the corresponding forest $F \in \FC(m, n)$.
\item The number of times $1, 2, \dots, n-m+1$ appears in a parking function $\pi \in \PF(m, n)$ respectively equals the degree of the root vertex $01, 02, \dots, 0(n-m+1)$ in the corresponding forest $F \in \FC(m, n)$.
\end{itemize}
\end{theorem}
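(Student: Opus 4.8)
The plan is to unwind the bijection $\phi$ and track what the two pairs of statistics become under the identification $(t(f),\sigma_f) = (s(\pi),\tau_\pi)$ that defines $\mathscr{C}(m,n)$. The key observation is that both statistics are encoded entirely in the specification vectors $t(f)$ and $s(\pi)$, which are literally equal, so the equidistribution will follow once I verify that the combinatorial quantities in question are read off from the specification in the same positional way on both sides.

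First I would handle the second bullet, which is the cleaner of the two. By definition of the BFS reading order, the first $n-m+1$ entries of $t(f)$ are $r_{01},\ldots,r_{0(n-m+1)}$, the degrees (numbers of children) of the roots $01,\ldots,0(n-m+1)$. On the parking-function side, the first $n-m+1$ entries of $s(\pi)$ are $\#1(\pi),\ldots,\#(n-m+1)(\pi)$, the numbers of cars preferring spots $1,\ldots,n-m+1$. Since $t(f)=s(\pi)$ coordinatewise under $\phi$, we get $\deg(0k) = \#k(\pi)$ for each $k \le n-m+1$, and in particular $\deg(0) = \sum_{k\le n-m+1}\#k(\pi) = \slev(\pi)$. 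This is an exact equality of statistics under the bijection, which is stronger than mere equidistribution.

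Next I would treat the first bullet, which requires the order permutation as well. For a fixed index $i \in [m]$: on the parking side, $\tau_\pi(i)$ is the position of $\pi_i$ in the sorted list, so $\pi_i$ is the $\tau_\pi(i)$-th smallest value, say spot $k$; then "the number of times $\pi_i$ appears" is $\#k(\pi) = s(\pi)_k$. On the forest side, vertex $i$ is the non-root vertex read in BFS position $\sigma_f(i)$; its parent is the vertex $v_p$ in the BFS sequence that is the predecessor of $v_{\sigma_f(i)}$, and "the degree of the parent of vertex $i$" is $r_p = t(f)_p$. The content of the claim is that $p = k$ for the same $k$, i.e., that the predecessor index in the BFS sequence matches the sorted-value index. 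This is exactly the balance/compatibility content of the correspondence $\mathscr{C}(m,n)$: the BFS ordering arranges children in order of increasing predecessor (and then increasing label), so the non-root vertices sharing a given predecessor occupy a contiguous block of BFS positions, and the $j$-th such block has size $t(f)_j = s(\pi)_j$; the sorted parking values likewise group the cars preferring spot $j$ into the $j$-th contiguous block of size $s(\pi)_j$. Hence the BFS position of vertex $i$ lies in the block with predecessor index $k$ precisely when $\pi_i = k$ (the $k$-th smallest value), giving $\deg(\mathrm{parent}\ \text{of}\ i) = t(f)_k = s(\pi)_k = \#k(\pi)$.

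Specializing the first bullet to $i=1$ then yields $\deg(p) = \#\pi_1(\pi) = \lel(\pi)$ (since $\lel(\pi)$ counts the cars preferring the same spot as car $1$), and combined with $\deg(0)=\slev(\pi)$ this gives the asserted equidistribution of $(\slev,\lel)$ with $(\deg(0),\deg(p))$. The main obstacle is the middle step: pinning down precisely why the BFS predecessor-block structure corresponds coordinatewise to the sorted-value-block structure. This is implicit in the claim that $\mathscr{C}(m,n)$ simultaneously parametrizes forests and parking functions via $(t(f),\sigma_f)=(s(\pi),\tau_\pi)$, so I would either invoke the balance and compatibility conditions from \cite[Section 2.2]{KY} directly, or give a short self-contained induction on BFS levels showing that the $k$-th block of non-root BFS positions consists exactly of the children of the vertex read in position $k$ — this is the only place where the detailed structure of the Foata--Riordan construction, rather than just the equality of specifications, is needed.
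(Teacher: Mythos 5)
Your proposal is correct and follows essentially the same route as the paper: the paper's proof simply records the explicit formulas $\pi_i = j$ if $f_i = 0j$ and $\pi_i = (n-m+1)+\sigma_f(f_i)$ otherwise (i.e., $\pi_i$ is the BFS index of the parent of vertex $i$), from which both bullets follow because $\pi_i=\pi_j$ exactly when $i$ and $j$ share a parent. Your block-structure argument for why the BFS predecessor index matches the sorted-value index is precisely the content the paper compresses into those formulas and its follow-up remark, so there is no substantive difference in approach.
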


\begin{proof}
This is due to our specific construction. From a forest $F$ to a parking function $\pi$, we have
\begin{equation*}
\pi_i=\left\{\begin{array}{ll}
 j    & \text{ if } f_i=0j \text{ for some }j=1, 2, \dots, n-m+1, \\
 (n-m+1)+\sigma_f(f_i)    & \text{ otherwise}.
\end{array}\right.
\end{equation*}
Conversely, from a parking function $\pi$ to a forest $F$, we have
\begin{equation*}
f_i=\left\{\begin{array}{ll}
 0j    & \text{ if } \pi_i=j \text{ for some }j=1, 2, \dots, n-m+1, \\
 \tau_\pi^{-1}(\pi_i-(n-m+1))    & \text{ otherwise}.
\end{array}\right.
\end{equation*}
The second claim is clear. For the first claim, we note that $\pi_i=\pi_j$ corresponds to $f_i=f_j$, i.e., vertices $i$ and $j$ have the same parent.
\end{proof}

\begin{remark}
Note that in our construction, $\pi_i=j$, or car $i$ prefers spot $j$, if and only if $i$ is a child of $u$, where $u$ is the $j$-th vertex of the rooted forest under the BFS order. Theorem \ref{thm:correspondence} thus follows immediately.
\end{remark}


\section{Statistics on trees and forests}

In the bijection described in the previous section, the number of times $\pi_1$ occurs in the parking function (the statistic $\lel(\pi)$) corresponds to the degree $\deg(p)$ of the parent $p$ of vertex $1$ (see Theorem~\ref{thm:correspondence}). Moreover, the total number of times $1,2,\ldots,n-m+1$ occur in the parking function (our statistic $\slev(\pi)$) corresponds to the total root degree $\deg(0)$, i.e., the sum of the degrees of all roots. Hence, the pair $(\deg(0),\deg(p))$ follows the same joint distribution as the pair $(\slev(\pi),\lel(\pi))$. The following generating function identity is therefore an automatic consequence of Corollary~\ref{cor:pf1}.

\label{sec:forest}

\begin{theorem}\label{thm:all3}
\begin{multline}\label{master3}
\sum_{F \in \FC(m, n)} x^{\deg(0)} y^{\deg(p)}
=(n-m+1)xy\left[(m-1)((n-m+1)x+y+m-1)^{m-2}\right.\\\left.+(xy+(n-m)x+1)(xy+(n-m)x+m)^{m-2}\right].
\end{multline}
\end{theorem}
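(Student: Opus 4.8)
The plan is to deduce this identity directly from results already in hand: the BFS bijection $\phi\colon\FC(m,n)\to\PF(m,n)$ of Section~\ref{sec:BFS}, Theorem~\ref{thm:correspondence}, and Corollary~\ref{cor:pf1}. No new computation is needed; the only thing to check with care is that the forest pair $(\deg(0),\deg(p))$ is carried, object by object, onto the parking-function pair $(\slev,\lel)$ under $\phi$.

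First I would recall that under $\phi$ the forest specification $t(f)$ equals the parking specification $s(\pi)$ and $\sigma_f=\tau_\pi$, so that $\phi$ really is a bijection between $\FC(m,n)$ and $\PF(m,n)$. Then I would invoke the two bullets of Theorem~\ref{thm:correspondence}. The second bullet gives, for each $j=1,2,\dots,n-m+1$, that the multiplicity $\#j(\pi)$ of the value $j$ in $\pi$ equals the degree of the root $0j$ in $F=\phi^{-1}(\pi)$; summing over $j$ yields
\[
\slev(\pi)=\sum_{j=1}^{n-m+1}\#j(\pi)=\sum_{j=1}^{n-m+1}\deg(0j)=\deg(0).
\]
The first bullet, specialized to the index $i=1$, says that the number of times $\pi_1$ occurs in $\pi$ equals the degree of the parent $p$ of the vertex labeled $1$ in $F$; since the left-hand side is by definition $\lel(\pi)$, this gives $\lel(\pi)=\deg(p)$. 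Hence $(\slev(\pi),\lel(\pi))=(\deg(0),\deg(p))$ for every $\pi$, with $F=\phi^{-1}(\pi)$, and therefore
\[
\sum_{F\in\FC(m,n)} x^{\deg(0)}y^{\deg(p)}=\sum_{\pi\in\PF(m,n)} x^{\slev(\pi)}y^{\lel(\pi)}.
\]
Substituting the right-hand side from Corollary~\ref{cor:pf1} produces \eqref{master3}.

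The single point that genuinely needs attention is the identification $\lel(\pi)=\deg(p)$: one must confirm that the parent of the vertex labeled $1$ under the BFS bijection is precisely the vertex whose children index the cars sharing car $1$'s preferred spot. This is exactly the first bullet of Theorem~\ref{thm:correspondence}, together with the observation recorded in its proof that $\pi_i=\pi_j$ if and only if vertices $i$ and $j$ have the same parent, applied at $i=1$. Everything else is bookkeeping already carried out in Sections~\ref{sec:pf} and~\ref{sec:BFS}, so the proof is short.
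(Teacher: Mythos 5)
Your proposal matches the paper's own derivation: the text preceding Theorem~\ref{thm:all3} deduces it exactly as you do, by using Theorem~\ref{thm:correspondence} to identify $(\deg(0),\deg(p))$ with $(\slev(\pi),\lel(\pi))$ under the BFS bijection and then quoting Corollary~\ref{cor:pf1}. Your extra care in checking $\lel(\pi)=\deg(p)$ via the parent relation is sound and consistent with the remark following Theorem~\ref{thm:correspondence}.
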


When $m=n$, the rooted spanning forest $F \in \FC(m, n)$ reduces to a rooted tree $T \in \calt(n)$. We recognize from Corollaries \ref{pf1} and \ref{pf2} that $\one(\pi)$ and $\lel(\pi)$ are equidistributed. The breadth first search algorithm maps them to $\deg(0)$ and $\deg(p)$, respectively, so those are equidistributed as well. In words, the distribution of the number of children of the root $0$ of a rooted labeled tree follows the same distribution as the number of children of the parent of vertex $1$ (or indeed by symmetry the parent of any fixed vertex). The property of being parent of a specific vertex induces a bias towards higher degrees, which turns out to be equivalent to the bias induced by being the root (which necessarily has at least one child). The following procedure provides an explicit bijection $\theta$ for the equidistribution of $\deg(0)$ and $\deg(p)$.
\begin{enumerate}
    \item Remove the edge connecting vertices $1$ and $p$;
    \item Connect vertices $0$ and $1$ by an edge;
    \item Interchange vertices $0$ and $p$. 
\end{enumerate}

\begin{figure}[htbp]
\centering
\begin{tikzpicture}[scale=0.8]
\node at (0,0) {$0$} [grow = down]
    child {edge from parent [thick]}
    child {edge from parent [thick]}
    child {node {$p$} edge from parent [dashed, thick]
    child{edge from parent [solid, thick]}
    child{edge from parent [solid, thick]}
    child{node {$1$} edge from parent [color=red, solid, thick]}};
    \draw[thick] (2.7, -3.3) to (1.5, -4.5);
    \draw[thick] (3, -3.3) to (3, -4.5);
    \draw[thick] (3.3, -3.3) to (4.5, -4.5);

\draw [<->, thick] (4.75,-2) -- (6.75,-2);
\node at (5.75, -1.5) {$\theta$}; 

\node at (9,0) {$p$} [grow = down]
    child {edge from parent [thick]}
    child {edge from parent [thick]}
    child {node {$0$} edge from parent [dashed, thick] child{edge from parent
        [solid, thick]} child{edge from parent [solid, thick]} child{node {$1$} child{edge from parent [thick]} child{edge from parent [thick]} child{edge from parent [thick]} edge from parent [white, densely dotted, thick]}};
    \draw[color=red, thick] (9.2,0.15) to[bend left=90] (12.15,-2.9);
    \draw[thick] (11.7, -3.3) to (10.5, -4.5);
    \draw[thick] (12, -3.3) to (12, -4.5);
    \draw[thick] (12.3, -3.3) to (13.5, -4.5);
\end{tikzpicture}
\caption{A bijective map between $\deg(0)$ and $\deg(p)$ (illustration).}
\label{tree:illustration}
\end{figure}
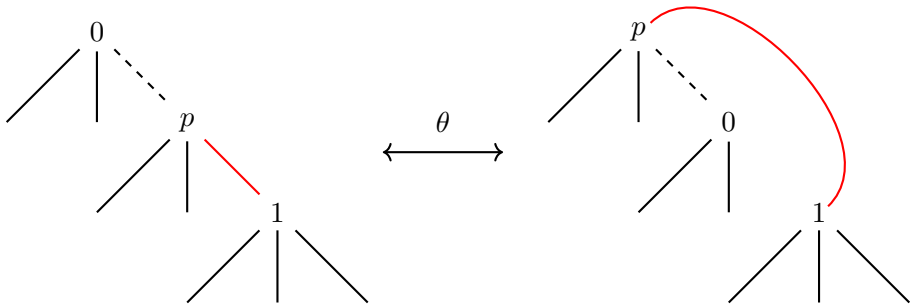

See Figure~\ref{tree:illustration} for the general procedure and Figure~\ref{tree:example} for an example. This map has the extra benefit of being an involution. Moreover, the degrees of all vertices except $0$ and $p$ are preserved. Some nice features are hence introduced in the corresponding parking function bijection, where in our example
\begin{equation*}
    \pi=(8, 4, 5, 1, 2, 1, 1, 5, 6) \leftrightarrow \pi'=(5, 8, 2, 2, 1, 5, 5, 4, 9).
\end{equation*}
We see that $\one(\pi)$ and $\lel(\pi)$ are switched, but the frequencies of the non-$1$ and non-leading elements are preserved up to permutation. In the example, the non-$1$ and non-leading elements in $\pi$ are $5$ (occurring twice), $2$, $4$, and $6$. Those in $\pi'$ are $2$ (occurring twice), $4$, $8$, and $9$.

\begin{figure}[htbp]
\centering
\begin{tikzpicture}[scale=0.6]
\node at (0,0) {0} [grow = down]
    child {node{4} edge from parent [thick] child{node{5} edge from parent [thick] child {node{3} edge from parent [thick]} child {node{8} edge from parent [thick] child {node{1} edge from parent [thick]}}}}
    child {node{6} edge from parent [thick]}
    child {node{7} edge from parent [thick] child{node{2} edge from parent [thick] child{node{9} edge from parent [thick]}}};

\draw [<->, thick] (3,-2) -- (5,-2);
\node at (4, -1.5) {$\theta$}; 

\node at (7.5,1.5) {0} [grow = down]
    child {node{5} edge from parent [thick] child {node{3} edge from parent [thick]} child{node{4} edge from parent [thick] child {node{8} edge from parent [thick] child {node{1} edge from parent [thick]} child {node{6} edge from parent [thick]} child {node{7} edge from parent [thick] child {node{2} edge from parent [thick] child {node{9} edge from parent [thick]}}}}}};
\end{tikzpicture}
\caption{An example of the map $\theta$.}
\label{tree:example}
\end{figure}
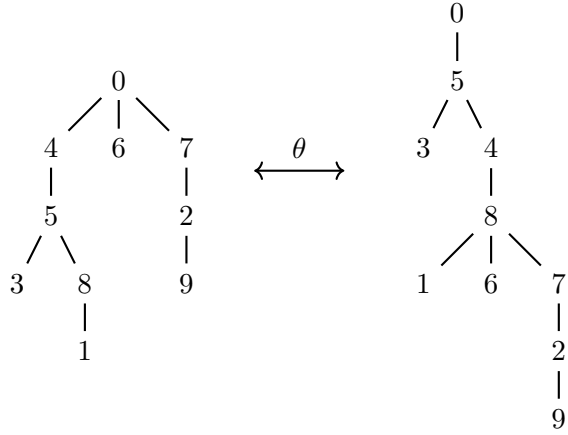

The counting formula in our next proposition is equivalent to Theorem~\ref{thm:expl-formula} in view of the bijection between parking functions and forests. In the following, we also illustrate it with a combinatorial proof of the statement in the setting of forests.

\begin{proposition}\label{thm:master3alt}
Let $s, t \geq 1$. We have
\begin{multline*} 
\#\{F \in \FC(m, n)\colon  \deg(0)=s \text{ and } \deg(p)=t\}\\=\binom{m-2}{s-1, t-1, m-s-t} (n-m+1)^{s} (m-1)^{m-s-t+1}\\+\binom{m-1}{t-1, s-t, m-s} s (n-m+1) (n-m)^{s-t} m^{m-s-1}. 
\end{multline*}
\end{proposition}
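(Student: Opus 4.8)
The strategy is to count forests $F \in \FC(m,n)$ directly by building them vertex-by-vertex, splitting into two cases according to whether the distinguished vertex $1$ is a child of a root or not. These two cases will produce the two summands in the claimed formula, mirroring the case split in the proof of Theorem~\ref{thm:expl-formula}. Throughout, recall that $\deg(0)$ is the total number of children of the $n-m+1$ roots $01,\ldots,0(n-m+1)$, and $\deg(p)$ is the number of children of the parent of vertex $1$.

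\textbf{Case 1: vertex $1$ is not a child of any root.} Then the parent $p$ of vertex $1$ is one of the $m-1$ non-root vertices $\{2,\ldots,m\}$. I would first choose the $s$ vertices among $\{2,\ldots,m\}$ that are children of roots (there are $\binom{m-1}{s}$ ways), then distribute them among the $n-m+1$ roots (an arbitrary function, $(n-m+1)^s$ ways). The remaining $m-1-s$ non-root vertices, together with vertex $1$, must form — together with the $s$ already-placed vertices serving as ``available parents'' along with each other — an appropriate sub-forest. The key combinatorial input here is a generalized Cayley-type count: the number of ways to attach the remaining vertices so that exactly $t-1$ of them (besides vertex $1$) are children of $p$, and $p$ itself is one of the $s$ root-children. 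One expects this to contribute a factor $s \binom{m-s-1}{t-1}(m-1)^{m-s-t}$ by the same reasoning as in the parking-function proof: there are $s$ choices for which root-child is $p$, then $\binom{m-s-1}{t-1}$ choices for the other children of $p$ among the remaining $m-s-1$ non-root non-$p$ vertices, and a forest-counting argument (Pollak's argument transported through the bijection $\phi$, or a direct functional-digraph count) gives $(m-1)^{m-s-t}$ for the rest. Multiplying everything out and converting binomials should yield $\binom{m-2}{s-1,t-1,m-s-t}(n-m+1)^s (m-1)^{m-s-t+1}$.

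\textbf{Case 2: vertex $1$ is a child of a root.} Then $p = 0j$ for some $j$, and $\deg(p)$ is the number of children of that particular root. Here I would choose the set of $s$ root-children from $\{2,\ldots,m\}$ — but with vertex $1$ already being one of the $s$ root-children, there are $\binom{m-1}{s-1}$ ways to pick the other $s-1$; choose which root is $p$ (i.e. the value $j$), giving $n-m+1$ options; choose the other $t-1$ children of that root among the remaining $s-1$ root-children, giving $\binom{s-1}{t-1}$ ways; distribute the remaining $s-t$ root-children among the other $n-m$ roots, giving $(n-m)^{s-t}$ ways; and finally attach the $m-s$ non-root-children below the $m$ non-root vertices, which a forest-counting argument gives as $s\, m^{m-s-1}$. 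Multiplying and simplifying the binomial coefficients should produce $\binom{m-1}{t-1,s-t,m-s}\, s\,(n-m+1)(n-m)^{s-t} m^{m-s-1}$.

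\textbf{Main obstacle.} The delicate point is justifying the two ``forest-counting'' factors $(m-1)^{m-s-t}$ and $s\,m^{m-s-1}$ in a self-contained combinatorial way, i.e., counting the number of ways to complete the sub-forest once the root-children and the children of $p$ are fixed. The cleanest route is probably to invoke the generalized Cayley formula for the number of forests on a labeled vertex set with a prescribed set of roots (each non-root vertex choosing a parent, subject to acyclicity), which states that the number of forests on $k$ non-root vertices with $r$ roots is $r(r+k)^{k-1}$, or more refined versions tracking the degree of specified roots. Getting the bookkeeping exactly right — in particular, correctly identifying the ``effective root set'' for the leftover vertices in each case and confirming that vertex $1$ and its subtree are handled consistently — is where care is needed; but this is precisely the content that the bijection $\phi$ of Section~\ref{sec:BFS} already guarantees matches Theorem~\ref{thm:expl-formula}, so as a fallback the Proposition follows immediately from that theorem. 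The value added by the direct argument is the transparent forest-theoretic interpretation of each factor.
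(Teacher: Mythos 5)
Your case split (is vertex $1$ a child of a root or not?) is the same as the paper's, and your Case 2 essentially coincides with the paper's argument: $\binom{m-1}{s-1}$ choices of the other root-children, $s\,m^{m-s-1}$ forests on $[m]$ with those $s$ prescribed roots, $\binom{s-1}{t-1}$ choices of siblings of vertex $1$, $n-m+1$ choices of which root is $p$, and $(n-m)^{s-t}$ assignments of the remaining root-children. But Case 1 contains a genuine gap, and the specific justification you offer for the factor $s\binom{m-s-1}{t-1}(m-1)^{m-s-t}$ is incorrect: you assert that $p$ "is one of the $s$ root-children" and that the factor $s$ counts "which root-child is $p$". The parent $p$ of vertex $1$ need not be at level $1$ at all; it can sit at any depth (in the paper's running example $p$ is a grandchild of a root). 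In the parking-function proof of Theorem~\ref{thm:expl-formula} the factor $s$ has an entirely different origin: it is the Pollak rotation count hidden in $|\PF(m-s,m-1)| = s\,m^{m-s-1}$, not a choice of position for $p$. Consequently the "direct functional-digraph count" you gesture at is not established: the generalized Cayley formula $r(r+k)^{k-1}$ counts forests with prescribed roots, but it does not by itself give the refined count of such forests in which the parent of a fixed non-root vertex has a prescribed number of children.

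The paper closes exactly this gap with a device absent from your sketch. It builds an auxiliary forest on $[m]\setminus\{1\}$ whose prescribed roots are a chosen label $r$ together with \emph{both} the future root-children $x_1,\ldots,x_{s-1}$ \emph{and} the future siblings $y_1,\ldots,y_{t-1}$ of vertex $1$ (so $s+t-1$ roots in all), distinguishes a vertex $p$ in the component of $r$ --- which by a symmetry argument from the count $ba^{a-b-1}$ gives exactly $(m-1)^{m-s-t}$ possibilities --- and then splits the distinguished vertex into an edge from $p$ to $1$, hangs the $y_j$ on $p$, and attaches $r$ and the $x_i$ to the roots $0j$. This distinguished-vertex/vertex-splitting construction is the missing idea that legitimizes the factor you could only assert. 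Your fallback --- deducing the proposition from Theorem~\ref{thm:expl-formula} via the bijection behind Theorem~\ref{thm:correspondence} --- is a valid proof (the paper itself notes the equivalence), but it is not the self-contained forest-theoretic argument you set out to give, and as written your direct argument for Case 1 does not go through.
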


\begin{proof}
As a starting point, we recall the well-known result that the number of rooted forests with vertex set $[a]$ and $b$ components whose root labels are given is $ba^{a-b-1}$ (see~\cite{Stanley}). Thus the number of such rooted forests \emph{with one distinguished vertex} (possibly one of the roots) is $ba^{a-b}$, and by symmetry the number of such rooted forests where a vertex in the first component is distinguished must be $a^{a-b}$. 
Now in order to prove our formula, we distinguish two cases:

\textbf{Case 1. The parent $p$ of vertex $1$ is not one of the roots}. 
A forest $F$ with this property as well as $\deg(0)=s$  and  $\deg(p)=t$ can be uniquely constructed as follows:
\begin{itemize}
\item Choose a label $r \in [m] \setminus \{1\}$ ($m-1$ possibilities).
\item Choose two disjoint sets of labels $\{x_1,x_2,\ldots,x_{s-1}\}$ and $\{y_1,y_2,\ldots,y_{t-1}\}$ from $[m] \setminus \{1,r\}$ (for which there are $\binom{m-2}{s-1,t-1,m-s-t}$ possibilities).
\item Choose a rooted forest on $[m] \setminus \{1\}$ with root labels $r,x_1,x_2,\ldots,x_{s-1},y_1,y_2,\ldots,y_{t-1}$ and a distinguished vertex $p$ in the first component (there are $(m-1)^{m-s-t}$ possibilities, as explained above). Note that potentially $p = r$.
\item Split the distinguished vertex $p$ into two vertices, labeled $p$ and $1$ respectively, where $p$ is the parent and $1$ is the child, and all former children of $p$ now become children of $1$.
\item Add roots $01, 02, \dots, 0(n-m+1)$, and connect each of the vertices $r,x_1,x_2,\ldots,x_{s-1}$ with one of these roots by an edge. There are $(n-m+1)^s$ possibilities for this.
\item Add edges between vertex $p$ and $y_1,y_2,\ldots,y_{t-1}$. Note that $p$ is indeed the parent of $1$ in this construction, and that $\deg(0)=s$ as well as $\deg(p)=t$.
\end{itemize}
It is easy to reverse the procedure given a forest with $\deg(0) = s$ and $\deg(p) = t$ for which $p$ is not one of the roots. 
So to summarize, we have
\begin{equation*}
    \binom{m-2}{s-1, t-1, m-s-t} (n-m+1)^{s} (m-1)^{m-s-t+1}
\end{equation*}
possible forests in this case, which accounts for the first term in our formula. Case 1 is illustrated in the special case $n = m = 13$, $s = 3$ and $t = 2$ in Figure~\ref{fig:illustration}. The choice of root labels is $r = 5$, $\{x_1,x_2\} = \{3,13\}$ and $\{y_1\} = \{9\}$. 

\begin{figure}[htbp]
\begin{center}
\begin{tikzpicture}[scale=0.6]

\node [circle,draw,inner sep=1.5mm] (v1) at (0,0) {5};
\node [circle,draw,inner sep=1.5mm] (v2) at (4.5,0) {3};
\node [circle,draw,inner sep=1mm] (v3) at (9,0) {13};
\node [circle,draw,inner sep=1.5mm] (v4) at (13.5,0) {9};
\node [circle,draw,inner sep=1mm] (v5) at (-1.5,-2) {12};
\node [circle,draw,ultra thick,inner sep=1.5mm] (v6) at (1.5,-2) {7};
\node [circle,draw,inner sep=1.5mm] (v7) at (4.5,-2) {8};
\node [circle,draw,inner sep=1.5mm] (v8) at (7.5,-2) {2};
\node [circle,draw,inner sep=1.5mm] (v9) at (10.5,-2) {6};
\node [circle,draw,inner sep=1mm] (v10) at (13.5,-2) {11};
\node [circle,draw,inner sep=1.5mm] (v11) at (1.5,-4) {4};
\node [circle,draw,inner sep=1mm] (v12) at (4.5,-4) {10};

\draw (v1)--(v5);
\draw (v1)--(v6);
\draw (v6)--(v11);
\draw (v2)--(v7);
\draw (v7)--(v12);
\draw (v3)--(v8);
\draw (v3)--(v9);
\draw (v4)--(v10);

\node at (0,0.9) {$r$};
\node at (4.5,0.9) {$x_1$};
\node at (9,0.9) {$x_2$};
\node at (13.5,0.9) {$y_1$};
\node at (1.6,-1.1) {$p$};

\draw [dashed] (3.75,1.5)--(5.25,1.5)--(5.25,-5)--(3.75,-5)--cycle;
\draw [dashed] (6.75,1.5)--(11.25,1.5)--(11.25,-3)--(6.75,-3)--cycle;
\draw [dashed] (12.75,1.5)--(14.25,1.5)--(14.25,-3)--(12.75,-3)--cycle;

\node [circle,draw,inner sep=1.5mm] (w0) at (4.5,-6) {0};
\node [circle,draw,inner sep=1.5mm] (w1) at (0,-8) {5};
\node [circle,draw,inner sep=1.5mm] (w2) at (4.5,-8) {3};
\node [circle,draw,inner sep=1mm] (w3) at (9,-8) {13};
\node [circle,draw,inner sep=1.5mm] (w4) at (2.25,-12) {9};
\node [circle,draw,inner sep=1mm] (w5) at (-1.5,-10) {12};
\node [circle,draw,ultra thick,inner sep=1.5mm] (w6) at (1.5,-10) {7};
\node [circle,draw,inner sep=1.5mm] (w7) at (4.5,-10) {8};
\node [circle,draw,inner sep=1.5mm] (w8) at (7.5,-10) {2};
\node [circle,draw,inner sep=1.5mm] (w9) at (10.5,-10) {6};
\node [circle,draw,inner sep=1mm] (w10) at (2.25,-14) {11};
\node [circle,draw,inner sep=1.5mm] (w11) at (0.75,-12) {1};
\node [circle,draw,inner sep=1mm] (w12) at (4.5,-12) {10};
\node [circle,draw,inner sep=1.5mm] (w13) at (0.75,-14) {4};

\draw (w0)--(w1);
\draw (w0)--(w2);
\draw (w0)--(w3);
\draw (w1)--(w5);
\draw (w1)--(w6);
\draw (w6)--(w11);
\draw (w2)--(w7);
\draw (w7)--(w12);
\draw (w3)--(w8);
\draw (w3)--(w9);
\draw (w6)--(w4);
\draw (w4)--(w10);
\draw (w11)--(w13);

\draw [dashed] (1.5,-11)--(3,-11)--(3,-15)--(1.5,-15)--cycle;
\draw [dashed] (3.75,-7)--(5.25,-7)--(5.25,-13)--(3.75,-13)--cycle;
\draw [dashed] (6.75,-7)--(11.25,-7)--(11.25,-11)--(6.75,-11)--cycle;
\end{tikzpicture}
\end{center}
\caption{Illustration of the procedure: the rooted forest (top) with the distinguished vertex $p$ indicated by a thick node, and the final tree (bottom).}\label{fig:illustration}
\end{figure}
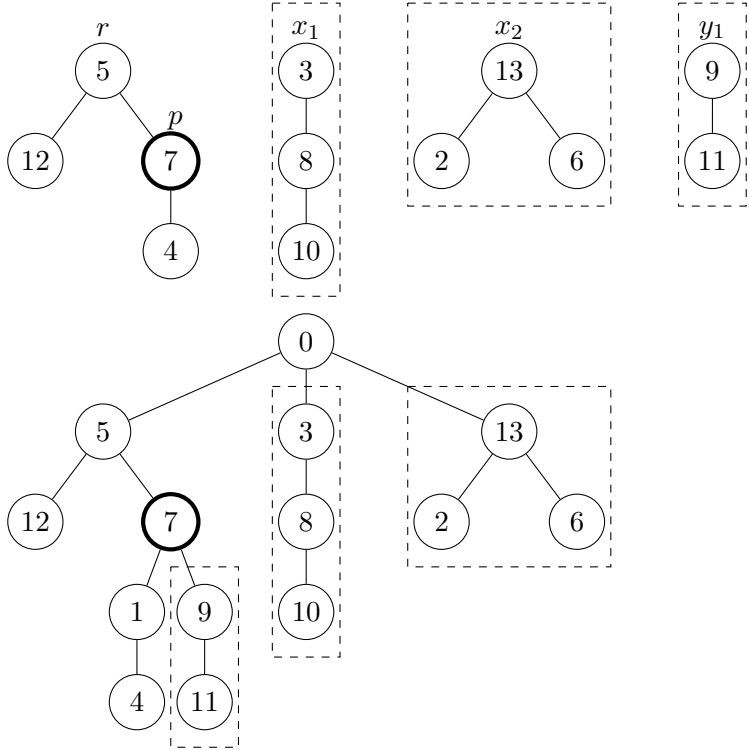

\textbf{Case 2. The parent $p$ of vertex $1$ is one of the roots}. 
Again, there is a unique way to construct all these forests: 
\begin{itemize}
    \item Select a set of labels $\{x_1,x_2,\ldots,x_{s-1}\}$ from $[m] \setminus \{1\}$ in $\binom{m-1}{s-1}$ ways.
    \item Construct a rooted forest with vertex set $[m]$ and root labels $1,x_1,x_2,\ldots,x_{s-1}$ (there are $sm^{m-s-1}$ possibilities).
    \item Among the labels $x_1,x_2,\ldots,x_{s-1}$, choose the siblings of vertex $1$; there are $\binom{s-1}{t-1}$ possible choices.
    \item Pick one of the $n-m+1$ roots $01,02,\ldots,0(n-m+1)$ as the parent $p$ of vertex $1$, and connect it and all the siblings chosen in the previous step to it by an edge.
    \item Lastly, pick one of the other $n-m$ roots as parent for each of the remaining vertices with label in the set $\{x_1,x_2,\ldots,x_{s-1}\}$. This step yields $(n-m)^{s-t}$ possibilities.
\end{itemize}
Putting these together gives us the second term and thus completes the proof.
\end{proof}

\section{Limit distributions}
\label{sec:discussions}

In this section we make some asymptotic observations on the statistics investigated in
our paper. We consider the scenario where $m$ is a linear function of $n$, and determine the limit distributions of the statistics $\lel$ and $\slev$ (for parking functions) and $\deg(0)$ as well as $\deg(p)$ (for rooted forests), respectively.

\begin{proposition}\label{Poisson-CLT-1}
Let $s$ be a fixed positive integer, and take $m=cn$ for some $0<c \leq 1$ as $n \to \infty$.
Consider the parking preference $\pi \in \PF(m, n)$ chosen uniformly at random, and let $\lel_s(\pi):=\#s(\pi)$ be the number of cars with the same preference as car
$s$. Then $\lel_s(\pi) - 1 \dconv \mathrm{Poisson}(c)$, i.e., for every fixed nonnegative integer $j$, 
\begin{equation*}
\PR\left(\lel_s(\pi)=1+j \hspace{.1cm} \vert \hspace{.1cm} \pi \in \PF(m, n) \right) \sim \frac{c^j e^{-c}}{j!}.
\end{equation*}
\end{proposition}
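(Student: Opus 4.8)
The plan is to extract the distribution of $\lel_s(\pi)$ from a generating function and then apply the standard Poisson approximation. Since parking functions are invariant under permuting the cars (the first observation after~\eqref{pigeon}), we have $\lel_s(\pi) \overset{d}{=} \lel_1(\pi) = \lel(\pi)$ for every fixed $s$, so it suffices to treat $\lel(\pi)$. By Corollary~\ref{pf1},
\[
\sum_{\pi \in \PF(m,n)} y^{\lel(\pi)} = (n-m+1)\, y\, (y+n)^{m-1},
\]
and since $|\PF(m,n)| = (n-m+1)(n+1)^{m-1}$, the probability generating function of $\lel(\pi)$ under the uniform distribution is
\[
\ER\bigl[y^{\lel(\pi)}\bigr] = \frac{y\,(y+n)^{m-1}}{(n+1)^{m-1}} = y\left(1 + \frac{y-1}{n+1}\right)^{m-1}.
\]
Thus $\lel(\pi) - 1$ has probability generating function $\bigl(1 + \tfrac{y-1}{n+1}\bigr)^{m-1}$, i.e.\ $\lel(\pi) - 1$ is a $\mathrm{Binomial}(m-1, \tfrac{1}{n+1})$ random variable. (Equivalently, extract the coefficient of $y^{1+j}$ directly from $(n-m+1)y(y+n)^{m-1}$, giving $\binom{m-1}{j}(n-m+1)n^{m-1-j} / \bigl((n-m+1)(n+1)^{m-1}\bigr) = \binom{m-1}{j}\bigl(\tfrac{1}{n+1}\bigr)^j\bigl(\tfrac{n}{n+1}\bigr)^{m-1-j}$.)

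Now I would invoke the classical Poisson limit theorem: if $X_n \sim \mathrm{Binomial}(N_n, p_n)$ with $N_n p_n \to \lambda$, then $X_n \dconv \mathrm{Poisson}(\lambda)$. Here $N_n = m-1$ and $p_n = \tfrac{1}{n+1}$, so with $m = cn$ we get $N_n p_n = \tfrac{cn-1}{n+1} \to c$. Hence $\lel_s(\pi) - 1 = \lel(\pi) - 1 \dconv \mathrm{Poisson}(c)$, which is exactly the claimed convergence; the pointwise statement for fixed $j$ follows since $\mathrm{Poisson}(c)$ is supported on all of $\zz_{\ge 0}$ and convergence in distribution on a lattice is equivalent to pointwise convergence of the mass functions. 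For a self-contained argument one can alternatively just estimate $\binom{m-1}{j}\bigl(\tfrac{1}{n+1}\bigr)^j\bigl(\tfrac{n}{n+1}\bigr)^{m-1-j}$ directly: $\binom{m-1}{j}\bigl(\tfrac{1}{n+1}\bigr)^j \sim \tfrac{(cn)^j}{j!\,n^j} \to \tfrac{c^j}{j!}$, and $\bigl(\tfrac{n}{n+1}\bigr)^{m-1-j} = \bigl(1 - \tfrac{1}{n+1}\bigr)^{cn - 1 - j} \to e^{-c}$, giving the stated asymptotic $\tfrac{c^j e^{-c}}{j!}$.

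There is essentially no serious obstacle here: the entire content is already packaged in Corollary~\ref{pf1}, and the only things to be careful about are (i) justifying the reduction from $\lel_s$ to $\lel$ via the permutation symmetry of parking functions, and (ii) stating clearly that the binomial-to-Poisson limit gives both convergence in distribution and the pointwise mass-function asymptotic. Neither requires more than a sentence. The remaining statements in Proposition~\ref{Poisson-CLT-1}'s companions (for $\slev$, and for $\deg(0)$ and $\deg(p)$ on forests) would be handled identically, reading off the relevant one-variable specializations from Corollaries~\ref{pf1} and~\ref{pf2} and Theorem~\ref{thm:all3}: $\slev(\pi)$ and $\deg(0)$ relate to a binomial with success probability $\tfrac{n-m+1}{n+1} \to 1-c$ after the appropriate shift, and $\deg(p)$ matches $\lel$ exactly by Theorem~\ref{thm:correspondence}.
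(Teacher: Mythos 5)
Your proposal is correct and follows essentially the same route as the paper: reduce to $s=1$ by permutation symmetry, normalize the generating function of Corollary~\ref{pf1} to recognize $\lel(\pi)-1$ as a $\mathrm{Binomial}(m-1,\tfrac{1}{n+1})$ variable, and conclude by the law of rare events. The extra coefficient-extraction check you include is a harmless elaboration of the same argument.
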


\begin{proof}
By permutation symmetry, we only need to prove this result for $s=1$, where $\lel_s(\pi)$ is exactly $\lel(\pi)$. We divide both sides of the generating function for $\lel(\pi)$ in Corollary \ref{pf1} through by $(n-m+1)(n+1)^{m-1}$. The right side becomes the probability generating function of $S(m, n)=1+\sum_{i=1}^{m-1} X_i$, where the $X_i$ are independent Bernoulli random variables:
\begin{equation*}
X_i=\left\{
      \begin{array}{ll}
        0, & \hbox{probability $n/(n+1)$,} \\
        1, & \hbox{probability $1/(n+1)$.}
      \end{array}
    \right.
\end{equation*}
Hence we have a standard case of the law of rare events, leading to a Poisson limit distribution.
\end{proof}

\begin{corollary}\label{Poisson-CLT-3}
Let $s$ be a fixed positive integer, and take $m=cn$ for some $0<c \leq 1$ as $n \to \infty$.
Consider the labeled forest $F \in \FC(m, n)$ chosen uniformly at random, and let $\deg(p_s)$ be the degree of the parent $p_s$ of vertex $s$. Then $\deg(p_s) - 1 \dconv \mathrm{Poisson}(c)$.
\end{corollary}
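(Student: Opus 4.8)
The plan is to deduce Corollary~\ref{Poisson-CLT-3} directly from the preceding results rather than redoing any probabilistic work. First I would invoke the BFS bijection $\phi$ between $\FC(m,n)$ and $\PF(m,n)$ established in Section~\ref{sec:BFS}: since $\phi$ is a bijection, sampling $F \in \FC(m,n)$ uniformly at random is the same as sampling $\pi \in \PF(m,n)$ uniformly at random. Next I would apply Theorem~\ref{thm:correspondence} (first bullet), which says that the number of times $\pi_s$ appears in $\pi$ equals the degree of the parent of vertex $s$ in the corresponding forest $F$; that is, the random variable $\deg(p_s)$ on $\FC(m,n)$ has exactly the same distribution as $\lel_s(\pi) = \#s(\pi)$ on $\PF(m,n)$ under the respective uniform measures.

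With this identification in hand, the statement becomes a restatement of Proposition~\ref{Poisson-CLT-1}: I would simply note that $\deg(p_s) - 1$ is equidistributed with $\lel_s(\pi) - 1$, which converges in distribution to $\mathrm{Poisson}(c)$ by Proposition~\ref{Poisson-CLT-1} under the scaling $m = cn$, $0 < c \le 1$, as $n \to \infty$. Equidistribution of two sequences of random variables is preserved under convergence in distribution, so the conclusion transfers immediately.

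There is essentially no obstacle here; the only thing to be careful about is phrasing the equidistribution correctly, namely that Theorem~\ref{thm:correspondence} is an equidistribution of statistics under the bijection (so in particular it matches the uniform measures on the two sets), and that one may freely replace $s=1$ by general fixed $s$ on the forest side exactly as permutation symmetry allows on the parking-function side. In fact one could bypass Theorem~\ref{thm:correspondence} altogether and argue purely at the level of generating functions: the specialization $x=1$ of Theorem~\ref{thm:all3} recovers $\sum_{F} y^{\deg(p)} = (n-m+1)y(y+n)^{m-1}$, which is identical to the generating function for $\lel(\pi)$ in Corollary~\ref{pf1}, and then the same law-of-rare-events computation as in the proof of Proposition~\ref{Poisson-CLT-1} applies verbatim. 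Either route is short; I would present the bijective one as the cleaner argument and perhaps remark on the generating-function alternative.
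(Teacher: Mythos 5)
Your proposal is correct and matches the paper's own argument: the paper likewise reduces to $s=1$ by permutation symmetry and then transfers Proposition~\ref{Poisson-CLT-1} to forests via the equidistribution of $\lel(\pi)$ and $\deg(p)$ from Theorem~\ref{thm:correspondence}. The generating-function alternative you mention is a valid shortcut but not needed.
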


\begin{proof}
By permutation symmetry, we only need to prove this result for $s=1$, where $\deg(p_s)$ is exactly $\deg(p)$.
Since $\lel(\pi)$ and $\deg(p)$ are equidistributed (by Theorem~\ref{thm:correspondence}), the statement follows from Proposition \ref{Poisson-CLT-1}.
\end{proof}

\begin{proposition}\label{Poisson-CLT-2}
Take $m=cn$ for some $0<c<1$ as $n \to \infty$.
Consider the parking preference $\pi \in \PF(m, n)$ chosen uniformly at random. Then we have 
\begin{equation*}
    \frac{\slev(\pi)-c(1-c)n}{\sqrt{c^2(1-c)n}} \dconv \mathcal{N}(0,1).
\end{equation*}
\end{proposition}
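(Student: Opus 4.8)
The plan is to read off the asymptotics directly from the exact generating function for $\slev(\pi)$ in Corollary~\ref{pf2}, exactly as in the proof of Proposition~\ref{Poisson-CLT-1}. Dividing the identity
\[
\sum_{\pi \in \PF(m, n)} x^{\slev(\pi)}=(n-m+1)x\left((n-m+1)x+m\right)^{m-1}
\]
through by the total count $|\PF(m,n)| = (n-m+1)(n+1)^{m-1}$, the right-hand side becomes
\[
x\left(\frac{(n-m+1)x+m}{n+1}\right)^{m-1},
\]
which I would recognize as the probability generating function of a random variable $S(m,n) = 1 + \sum_{i=1}^{m-1} Y_i$, where the $Y_i$ are i.i.d.\ Bernoulli random variables taking the value $1$ with probability $p_n := (n-m+1)/(n+1)$ and $0$ with probability $m/(n+1)$. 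So $\slev(\pi)$, under the uniform distribution on $\PF(m,n)$, is equal in law to $1 + \mathrm{Binomial}(m-1, p_n)$. (The leading factor $x$ accounts for the fact that $\slev \geq 1$ always, consistent with the remark after Lemma~\ref{lm:decomp}.)

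Next I would substitute $m = cn$ and compute the relevant moments. With $p_n = (n-cn+1)/(n+1) \to 1-c$ and $m-1 = cn - 1$, the mean is
\[
\mathbb{E}[\slev(\pi)] = 1 + (m-1)p_n = c(1-c)n + O(1),
\]
and the variance is
\[
\Var(\slev(\pi)) = (m-1)p_n(1-p_n) \sim cn \cdot (1-c) \cdot c = c^2(1-c)n,
\]
which match the centering and scaling in the statement. Since $0 < c < 1$ here, $p_n$ is bounded away from $0$ and $1$, so the variance grows linearly; this is exactly the regime where a central limit theorem (rather than a Poisson limit, as in Proposition~\ref{Poisson-CLT-1}) applies.

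Finally, I would invoke a central limit theorem for triangular arrays of independent (here, i.i.d.\ within each row) Bernoulli summands — e.g.\ the Lindeberg--Feller theorem, or simply the classical de Moivre--Laplace theorem applied with the slowly varying parameter $p_n$. The Lindeberg condition is immediate since the summands are uniformly bounded and the variance tends to infinity; alternatively one checks that the standardized third absolute moment tends to $0$ (a Lyapunov condition), which is clear because $(m-1)\mathbb{E}|Y_i - p_n|^3 \le (m-1)p_n(1-p_n) = o\big((m-1)p_n(1-p_n)\big)^{3/2}$. This gives
\[
\frac{\slev(\pi) - (m-1)p_n - 1}{\sqrt{(m-1)p_n(1-p_n)}} \dconv \mathcal{N}(0,1),
\]
and replacing the exact mean and variance by their asymptotic equivalents $c(1-c)n$ and $c^2(1-c)n$ (a routine application of Slutsky's theorem, since the ratios of the true to the approximate normalizations converge to $1$) yields the claimed statement. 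I do not anticipate a genuine obstacle here: the only point requiring a little care is confirming that the $O(1)$ discrepancies in the mean and the $(1+o(1))$ factor in the variance do not affect the limit, which is handled by Slutsky. The whole argument is the standard CLT companion to the law-of-rare-events argument already used for $\lel(\pi)$.
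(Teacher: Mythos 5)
Your proposal is correct and follows essentially the same route as the paper: normalize the generating function from Corollary~\ref{pf2} to recognize $\slev(\pi)$ as $1$ plus a sum of $m-1$ i.i.d.\ Bernoulli variables with success probability $(n-m+1)/(n+1)$, and apply the central limit theorem. The paper states this more tersely ("the standard central limit theorem applies"), whereas you supply the Lyapunov/Lindeberg verification and the Slutsky step justifying the replacement of the exact mean and variance by $c(1-c)n$ and $c^2(1-c)n$; these added details are accurate and fill in what the paper leaves implicit.
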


\begin{proof}
We proceed as in the proof of Proposition \ref{Poisson-CLT-1} and divide both sides of the generating function for $\slev(\pi)$ in Corollary \ref{pf2} through by $(n-m+1)(n+1)^{m-1}$. The right side becomes the probability generating function of $S(m, n)=1+\sum_{i=1}^{m-1} X_i$, where the $X_i$ are independent Bernoulli random variables:
\begin{equation*}
X_i=\left\{
      \begin{array}{ll}
        0, & \hbox{probability $m/(n+1)$,} \\
        1, & \hbox{probability $(n-m+1)/(n+1)$.}
      \end{array}
    \right.
\end{equation*}
The probabilities converge to $c$ and $1-c$ respectively, and the standard central limit theorem applies. This means that $\slev(\pi)$ may be approximated by $\mathcal{N}(0, 1)$ after standardization.
\end{proof}

\begin{corollary}\label{Poisson-CLT-4}
Take $m=cn$ for some $0<c<1$ as $n \to \infty$.
Consider the labeled forest $F \in \FC(m, n)$ chosen uniformly at random. Then we have
\begin{equation*}
    \frac{\deg(0)-c(1-c)n}{\sqrt{c^2(1-c)n}} \dconv \mathcal{N}(0,1).
\end{equation*}
\end{corollary}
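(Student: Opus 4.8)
The plan is to transfer the central limit theorem for $\slev(\pi)$ in Proposition~\ref{Poisson-CLT-2} directly to $\deg(0)$ using the bijection $\phi$ from Section~\ref{sec:BFS}. By the second bullet of Theorem~\ref{thm:correspondence}, the breadth first search bijection between $\PF(m,n)$ and $\FC(m,n)$ sends the statistic $\slev(\pi)$ (the number of times $1,2,\ldots,n-m+1$ appears in $\pi$) to $\deg(0)$ (the sum of the degrees of all roots $01,02,\ldots,0(n-m+1)$). In particular, if $F \in \FC(m,n)$ is chosen uniformly at random and $\pi = \phi(F)$ is the corresponding parking function, then $\pi$ is uniformly distributed on $\PF(m,n)$ and $\deg(0)$ has exactly the same distribution as $\slev(\pi)$.

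First I would invoke Theorem~\ref{thm:correspondence} (or equivalently the generating function identity in Theorem~\ref{thm:all3} specialized at $y=1$, which gives $\sum_{F \in \FC(m,n)} x^{\deg(0)} = (n-m+1)x((n-m+1)x+m)^{m-1}$, matching Corollary~\ref{pf2}) to conclude that $\deg(0)$ on $\FC(m,n)$ and $\slev(\pi)$ on $\PF(m,n)$ are equidistributed. Since equidistributed random variables have identical standardizations, the convergence
\begin{equation*}
\frac{\slev(\pi)-c(1-c)n}{\sqrt{c^2(1-c)n}} \dconv \mathcal{N}(0,1)
\end{equation*}
from Proposition~\ref{Poisson-CLT-2} immediately yields
\begin{equation*}
\frac{\deg(0)-c(1-c)n}{\sqrt{c^2(1-c)n}} \dconv \mathcal{N}(0,1),
\end{equation*}
which is the desired claim. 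This is exactly parallel to how Corollary~\ref{Poisson-CLT-3} was deduced from Proposition~\ref{Poisson-CLT-1}.

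There is essentially no obstacle here: the entire content is the equidistribution already established in Section~\ref{sec:BFS}, and convergence in distribution depends only on the distribution. If one preferred a self-contained argument, one could alternatively repeat the proof of Proposition~\ref{Poisson-CLT-2} verbatim, dividing the forest generating function $\sum_{F}x^{\deg(0)}$ from Theorem~\ref{thm:all3} (at $y=1$) by $(n-m+1)(n+1)^{m-1} = |\FC(m,n)|$ to recognize the probability generating function of $1+\sum_{i=1}^{m-1}X_i$ with $X_i$ independent Bernoulli with success probability $(n-m+1)/(n+1) \to 1-c$, and then apply the Lindeberg–Lévy central limit theorem. Either route is short; the bijective one is cleanest and is the one I would present.
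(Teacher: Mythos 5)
Your proposal is correct and matches the paper's own proof, which likewise deduces the result by noting that $\deg(0)$ and $\slev(\pi)$ are equidistributed via Theorem~\ref{thm:correspondence} and then invoking Proposition~\ref{Poisson-CLT-2}. The alternative self-contained route you sketch is also valid but unnecessary.
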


\begin{proof}
Since $\slev(\pi)$ and $\deg(0)$ are equidistributed (again by Theorem~\ref{thm:correspondence}), this is the same proof as for Proposition \ref{Poisson-CLT-2}.
\end{proof}

\begin{remark}
The asymptotic analysis of the special case $\one(\pi)$ for $\pi \in \PF(n, n)$ was conducted by Diaconis and Hicks \cite{DH}. The limit distribution of the root degree in labeled trees is classical \cite[Example IX.6]{FS}. Both $\one(\pi)-1$ and $\deg(0)-1$ can be approximated by $\mathrm{Poisson}(1)$.
\end{remark}

\section{Another bijection on labeled trees} \label{sec:rho} 

In this section we describe another simple involution $\rho$ on $\calt(n)$, which exchanges $\deg(0)$ and $\deg(p)$, where $p$ is the parent of vertex 1, while preserving the set of children, and hence the degree,  of every other vertex. 

For a classical parking function $\pi$, let $K(\pi)=\{ j: \pi_j=1 \text{ or } \pi_1\}$ be the set of cars that prefers spot $1$ or $\pi_1$. 
Translating the map $\rho$  to classical parking functions via the BFS bijection described in Section \ref{sec:BFS}, we obtain a
new involution $\hat \rho: \pi \rightarrow \hat \pi$ with the properties: 
\begin{itemize}
    \item  The map 
    $\hat \rho$ preserves the set $K(\pi)$, i.e., 
    $K(\pi)= K(\hat \pi)$.
    
    \item For any pair $k, \ell  \not \in K(\pi)$, $\pi_k = \pi_\ell$ if and only if  $\hat \pi_k=\hat \pi_\ell $.  
\end{itemize}

 Given  a rooted tree $T \in \calt(n)$, let the path from vertex $0$ to vertex $1$ be $0 - q - \cdots -p - 1$.  Assume that other children of vertex $0$ are $t_1, \dots, t_c$, and other children of vertex $p$ are $s_1, \dots, s_d$. 
The bijection $\rho$ is simply defined as follows: While $p \neq 0$, 
\begin{enumerate}
    \item replace the edges $(0, t_i)$ with $(p, t_i)$; 
    \item replace the edges  $(p, s_i)$ with $(0, s_i)$; 
    \item all other edges are kept the same. 
\end{enumerate}
In other words, the map $\rho$ swaps  the subtrees rooted at $t_i$ with those rooted as $s_j$, while keeping other branches of the rooted tree unchanged. 

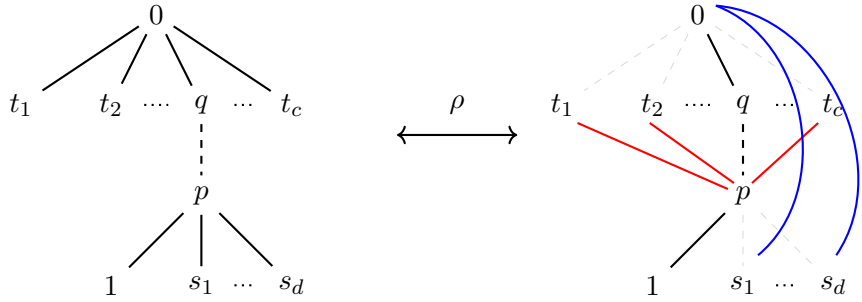
\begin{figure}[htbp]
\centering
\begin{tikzpicture}[scale=0.8]
\node at (0,0) {$0$} [grow = down]
   child {node {$t_1$} edge from parent [thick]}
   child {node {$t_2$} edge from parent [thick]}
    child { node {$q$} edge from parent [thick]
       child{node {$p$} edge from parent [dashed, thick]
           child{node {$1$} edge from parent [solid, thick]}
           child{node {$s_1$} edge from parent [solid, thick]}
           child{node {$s_d$} edge from parent [solid, thick]}}}  
   child {node {$t_c$} edge from parent [thick]};
    
  \draw [dotted, thick] (1.3, -4.5)--(1.6,-4.5); 
  \draw [dotted, thick] (1.3, -1.5)--(1.6, -1.5) (-0.2,-1.5)--(0.2,-1.5); 
  
\draw [<->, thick] (4,-2) -- (6,-2);
\node at (5, -1.5) {$\rho$}; 

\node at (9,0) {$0$} [grow = down]
child {node {$t_1$} edge from parent [color=gray!30, dashed, very thin]}
   child {node {$t_2$} edge from parent [color=gray!30, dashed, very thin]}
    child { node {$q$} edge from parent [thick]
       child{node {$p$} edge from parent [dashed, thick]
           child{node {$1$} edge from parent [solid, thick]}
           child{node {$s_1$} edge from parent [color=gray!30, dashed, very thin]}
           child{node {$s_d$} edge from parent [color=gray!30, dashed, very thin]}}}  
   child {node {$t_c$} edge from parent [color=gray!30, dashed, very thin]};
   \draw [dotted, thick] (10.3, -4.5)--(10.6,-4.5); 
  \draw [dotted, thick] (8.8,-1.5)--(9.2,-1.5) (10.3, -1.5)--(10.6, -1.5);
  
  \draw [color=red, thick] (9.5, -2.9)--(7, -1.8); 
  \draw [color=red, thick] (9.6, -2.8)--(8.2, -1.8);
  \draw [color=red, thick] (9.9, -2.8)--(11, -1.8);
  
  \draw[color=blue,  thick] (9.3,0.15) to[bend left=60] (10,-4);
  \draw[color=blue,  thick] (9.3,0.15) to[bend left=60] (11.3,-4);
  
\end{tikzpicture}
\caption{The involution $\rho$ swaps $\deg(0)$ and $\deg(p)$ (illustration).}
\label{rho:illustration}
\end{figure}

For any vertex $q \neq 0$ or $p$, let $C(q)$ be the set of children of $q$. Then  the nonempty sets of the form $C(q)$, together with $K=C(0) \cup C(p)$, form a set partition of $[n]$, which is preserved under $\rho$.  In terms of parking functions, for $\pi \in \PF(n)$, let $B_\pi(i)=\{j: \pi_j=i\}$ be the set of cars that prefer the spot $i$.  The collection of non-empty $B_\pi(i)$'s 
are exactly the collection of non-empty $C(q)$'s, 
 which we call the \emph{preference partition} of $\pi$. 
Now merge the blocks $B_\pi(1)$ and $B_\pi(\pi_1)$ to obtain the \emph{reduced preference partition} of $\pi$. 
(If $\pi_1=1$, the reduced preference partition is the same as the preference partition.) 
Then the bijection $\hat \rho$ preserves the reduced preference partition. 
Hence we have the following theorem which is a refinement of the symmetric joint distribution of $\lel(\pi)$ and $\mathrm{ones}(\pi)$. 


\begin{theorem}  \label{thm:map-rho} 
 Fix a set partition $\mathcal{P}=\{P_1, P_2, \dots, P_t\}$ of $[n]$. 
Then $\lel(\pi)$ and $\mathrm{ones}(\pi)$  have a symmetric joint distribution over all classical parking functions whose reduced preference partition is $\mathcal{P}$. 


\end{theorem}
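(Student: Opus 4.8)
The plan is to deduce the theorem directly from the involution $\hat\rho$ constructed just above, so the real task is to package its listed properties into a statement about the joint distribution restricted to a fixed reduced preference partition.

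First I would verify that $\hat\rho$, obtained by transporting $\rho$ through the BFS bijection $\phi$ of Section~\ref{sec:BFS}, is a well-defined involution on $\PF(n,n)$. On the tree side this amounts to checking that $\rho(T)$ is again a rooted tree on $[n]_0$: the only edges altered are those joining $0$ to its off-path children $t_1,\dots,t_c$ and those joining $p$ to its off-path children $s_1,\dots,s_d$, and re-attaching these subtrees creates no cycle because in $T$ the subtrees rooted at the $t_i$ avoid the path from $0$ to $p$ while those rooted at the $s_j$ lie strictly below $p$. Since the path from $0$ to $1$ is left untouched, applying $\rho$ a second time restores $T$; in the degenerate case $p=0$ (that is, $\pi_1=1$) the map is simply the identity. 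Because $\rho$ merely interchanges the off-path children of $0$ and of $p$, it fixes $C(v)$ for every vertex $v\neq 0,p$, fixes the union $K=C(0)\cup C(p)$, and exchanges $|C(0)|$ with $|C(p)|$.

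Next I would pass these facts through $\phi$. By Theorem~\ref{thm:correspondence}, $\mathrm{ones}(\pi)=\deg(0)=|C(0)|$ and $\lel(\pi)=\deg(p)=|C(p)|$, the nonempty blocks $B_\pi(i)$ of the preference partition are precisely the nonempty sets $C(v)$, and in particular $B_\pi(1)=C(0)$ and $B_\pi(\pi_1)=C(p)$; hence merging these two blocks to form the reduced preference partition corresponds exactly to replacing $C(0)$ and $C(p)$ by $K$. Consequently $\hat\rho$ preserves the reduced preference partition of $\pi$ and swaps the two statistics $\lel(\pi)$ and $\mathrm{ones}(\pi)$.

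Finally, fix a set partition $\mathcal{P}$ of $[n]$ and put $S_{\mathcal{P}}=\{\pi\in\PF(n,n): \text{the reduced preference partition of }\pi\text{ equals }\mathcal{P}\}$. The previous step shows that $\hat\rho$ restricts to an involution of $S_{\mathcal{P}}$ that interchanges $\lel$ and $\mathrm{ones}$, so for every pair $a,b\geq 1$ it restricts further to a bijection between $\{\pi\in S_{\mathcal{P}}: \lel(\pi)=a,\ \mathrm{ones}(\pi)=b\}$ and $\{\pi\in S_{\mathcal{P}}: \lel(\pi)=b,\ \mathrm{ones}(\pi)=a\}$; equality of these counts for all $a,b$ is exactly the claimed symmetry. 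I expect the only delicate point to be the bookkeeping in the third paragraph — matching the tree-side partition $\{C(v)\}_{v\neq 0,p}\cup\{K\}$ with the reduced preference partition and confirming the boundary case $\pi_1=1$ (where $\lel=\mathrm{ones}$ and the symmetry is trivial) — rather than any genuinely new argument.
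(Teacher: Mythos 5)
Your argument is correct and follows essentially the same route as the paper, which proves the theorem implicitly through the discussion preceding it: the involution $\rho$ swaps the off-path children of $0$ and $p$, hence preserves $C(q)$ for all other vertices and the union $K=C(0)\cup C(p)$, and transporting this through the BFS bijection gives an involution $\hat\rho$ on $\PF(n,n)$ that preserves the reduced preference partition while exchanging $\lel$ and $\one$. Your write-up supplies the details the paper leaves tacit (well-definedness of $\rho(T)$ as a tree, the involution property, and the boundary case $\pi_1=1$), but introduces no new idea.
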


Figure ~\ref{fig:map-rho} shows the image of $\rho$ for the rooted tree on the left of Figure~\ref{tree:example}. 
Note that it is different from the image of $\theta$ that is displayed on the right of Figure~\ref{tree:example}.

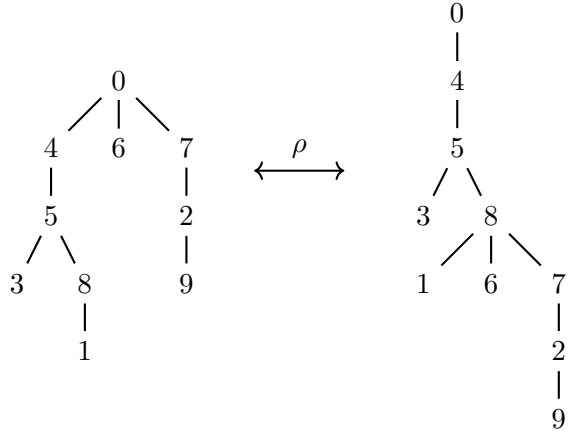
\begin{figure}[htbp]
\centering
\begin{tikzpicture}[scale=0.6]
\node at (0,0) {0} [grow = down]
    child {node{4} edge from parent [thick] child{node{5} edge from parent [thick] child {node{3} edge from parent [thick]} child {node{8} edge from parent [thick] child {node{1} edge from parent [thick]}}}}
    child {node{6} edge from parent [thick]}
    child {node{7} edge from parent [thick] child{node{2} edge from parent [thick] child{node{9} edge from parent [thick]}}};

\draw [<->, thick] (3,-2) -- (5,-2);
\node at (4, -1.5) {$\rho$}; 

\node at (7.5,1.5) {0} [grow = down]
    child { node{4} edge from parent [thick] 
            child {node{5} edge from parent [thick] 
            child {node{3} edge from parent [thick] }
            child {node{8} edge from parent [thick] 
                 child {node{1} edge from parent [thick]}
                 child {node{6} edge from parent [thick]} 
                 child {node{7} edge from parent [thick] 
                      child {node{2} edge from parent [thick] 
                      child {node{9} edge from parent [thick]} } 
                       }
                  }
                }
            };
                      
\end{tikzpicture}
\caption{An example of the map $\rho$.}
\label{fig:map-rho}
\end{figure}

Under the BFS bijection, the above trees give 
\begin{equation*}
    \pi=(8, 4, 5, 1, 2, 1, 1, 5, 6) \leftrightarrow \hat \rho(\pi)=\hat \pi=(5, 8, 3, 1, 2, 5, 5, 3, 9).
\end{equation*}
The preference partition of $\pi$ is $\{\{1\}, \{2\}, \{3,8\}, 
\{4,6,7\}, \{5\}, \{9\}\}$, and $K(\pi)=\{1,4,6,7\}$. One sees that 
$K(\hat \pi)=\{4,1,6,7\} = K(\pi)$, and $\pi$ and  $\hat \pi$  have the same reduced preference partition.   

Theorem~\ref{thm:map-rho} has some interesting consequences.  For example, we could take a set of  partitions $\mathcal{P}= (P_1, \dots, P_t)$ subject to certain conditions.  Two such examples are given below. 
\begin{corollary} 
   \begin{enumerate} 
   \item  Let $n \geq 2$. Statistics $(\lel(\pi), \mathrm{ones}(\pi))$ have a symmetric joint distribution over the set of classical parking functions $(\pi_1, \dots, \pi_n)$ with the condition that $\pi_2 \neq 1, \pi_1$. 
    \item Let $n \geq 3$. Statistics $(\lel(\pi), \mathrm{ones}(\pi))$ have a symmetric joint distribution over the set of classical parking functions $(\pi_1, \dots, \pi_n)$ with the condition that $\pi_2=\pi_3$ and $\pi_2  \neq 1 , \pi_1. $     
    \end{enumerate}
\end{corollary}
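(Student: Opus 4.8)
The statement to prove is the final Corollary, which has two parts asserting that $(\lel(\pi), \mathrm{ones}(\pi))$ has a symmetric joint distribution over certain restricted classes of classical parking functions. My plan is to derive both parts as direct consequences of Theorem~\ref{thm:map-rho} by choosing the set partition $\mathcal{P}$ appropriately and then summing.

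\textbf{Setup and strategy.} Theorem~\ref{thm:map-rho} tells us that for each fixed set partition $\mathcal{P}$ of $[n]$, the involution $\hat\rho$ restricts to a bijection on the set of classical parking functions whose reduced preference partition equals $\mathcal{P}$, and this bijection swaps $\lel$ and $\mathrm{ones}$. Hence for any \emph{collection} $\mathscr{Q}$ of set partitions, the union
\[
\bigcup_{\mathcal{P} \in \mathscr{Q}} \{\pi \in \PF(n,n) : \text{reduced preference partition of } \pi \text{ is } \mathcal{P}\}
\]
also carries a $\lel \leftrightarrow \mathrm{ones}$ swapping involution, so $(\lel, \mathrm{ones})$ is symmetric on that union. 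The whole task is therefore to check that each of the two stated conditions on $\pi$ — ``$\pi_2 \neq 1, \pi_1$'' for part (1), and ``$\pi_2 = \pi_3$ and $\pi_2 \neq 1, \pi_1$'' for part (2) — can be phrased purely as a condition on the reduced preference partition of $\pi$. If so, the corresponding set of parking functions is exactly such a union, and symmetry follows.

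\textbf{Key steps.} First I recall that the reduced preference partition of $\pi$ is obtained from the preference partition $\{B_\pi(i)\}$ by merging the block containing $1$ (as a \emph{position}, i.e.\ $B_\pi(1)$ if nonempty — but note here the blocks are sets of car labels and $B_\pi(i) = \{j : \pi_j = i\}$) with $B_\pi(\pi_1)$; equivalently $K(\pi) = B_\pi(1) \cup B_\pi(\pi_1)$ is one block and the other blocks are the remaining $B_\pi(i)$. For part (1): the condition $\pi_2 \neq 1$ and $\pi_2 \neq \pi_1$ says precisely that car $2$ is \emph{not} in $K(\pi)$, i.e.\ $2 \notin K(\pi)$. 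Whether $2$ lies in the merged block $K$ or in one of the other blocks is determined by the reduced preference partition alone (it is the block membership of the label $2$), so ``$2 \notin K$'' is a union-of-partitions condition, and part (1) follows. For part (2): we additionally require $\pi_2 = \pi_3$, i.e.\ cars $2$ and $3$ lie in the same block $B_\pi(\pi_2)$; combined with $2 \notin K$ this forces $3 \notin K$ as well, and the joint condition ``$2$ and $3$ lie in the same non-$K$ block of the reduced preference partition'' is again purely a condition on the reduced partition. Summing Theorem~\ref{thm:map-rho} over all partitions $\mathcal{P}$ of $[n]$ in which $2,3$ are in a common block distinct from $K$ yields part (2). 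I would also remark briefly why $\hat\rho$ preserves these conditions at the level of individual parking functions — but that is immediate since $\hat\rho$ preserves $K(\pi)$ and the reduced preference partition, so it is worth one sentence rather than a computation.

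\textbf{Main obstacle.} The only subtle point — and the step I expect to spend the most care on — is verifying that these position-based conditions ($\pi_2 \neq 1, \pi_1$, and $\pi_2 = \pi_3$) really are invariant properties of the reduced preference partition and not just of the full preference partition together with the distinguished values $1$ and $\pi_1$. The resolution is that the reduced preference partition already records exactly which car labels got merged into $K$, so ``label $2$ is in $K$'' or ``labels $2,3$ are in a common non-$K$ block'' are well-defined statements about $\mathcal{P}$; there is no ambiguity because $K$ is a named block of $\mathcal{P}$ (it is the one block that may fail to be of the form $B_\pi(i)$, and in any case $\hat\rho$ fixes it setwise). Once this is spelled out, both parts are immediate corollaries, and the lower bounds $n \geq 2$ and $n \geq 3$ are just the obvious requirements for the conditions to be non-vacuous.
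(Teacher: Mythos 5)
Your proposal is correct and is essentially the paper's own (one-sentence) proof: sum Theorem~\ref{thm:map-rho} over the partitions $\mathcal{P}$ in which $2$ does not lie in the block $K$ (part 1), respectively in which $2$ and $3$ share a block other than $K$ (part 2). The only small clean-up: the block $K$ is identified inside $\mathcal{P}$ simply as the block containing the label $1$ (since car $1$ always satisfies $\pi_1=\pi_1$, hence $1\in K(\pi)$), which is cleaner than your characterization of $K$ as ``the block that may fail to be of the form $B_\pi(i)$.''
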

\begin{proof}
    The first one corresponds to the case that $1$ and $2$ are not in the same block of $\mathcal{P}$, while the second one is that $2$ and $3$ belong to the same block, which  does not contain $1$. 
\end{proof}

Clearly in the map $\rho$ one can replace vertex $p$ with the parent of any other vertex $i$ in the rooted tree, or equivalently, replace $\lel(\pi)$ with the multiplicity of $\pi_i$ for any fixed $i$ in classical parking functions.

\section{Extension to $(a, b)$-parking functions}
\label{sec:ab}

In this section we extend our results for parking functions to $(a, b)$-parking functions. Our proofs will again be based on refinements of Pollak's proof technique, where we investigate the individual parking statistics for each car the moment it is parked on the circle.

Let $\uR=(u_1,\dots,u_m)$ be a weakly increasing sequence of positive
integers. A \emph{$\uR$-parking function} is a sequence $(\pi_1,\dots,
\pi_m)$ of positive integers whose weakly increasing rearrangement
$\lambda_1\leq \lambda_2\leq \cdots\leq \lambda_m$ satisfies
$\lambda_i\leq u_i$. This in particular implies that $\boldsymbol{u}$-parking
functions, like parking functions, are permutation invariant. There is a similar interpretation
for such $\boldsymbol{u}$-parking functions in terms of the classical
parking scenario: One wishes to park $m$ cars on a street with $u_m$
spots, all spots begin unoccupied and the $i$-th largest occupied spot after parking has label $\leq u_{n+1-i}$ for $1\leq i\leq m$.

Fix positive integers $a,b$ and set
$\uR=(a,a+b,a+2b,\dots, a+(m-1)b)$, i.e., $u_i=a+(i-1)b$. A $\boldsymbol{u}$-parking
function of this form is referred to in \cite[Section 1.4.4]{Yan} as an \emph{$(a,
b)$-parking function} of length $m$. Denote the set of $(a, b)$-parking functions of length $m$ by $\PF(a, b, m)$. It coincides with parking functions $\PF(m, n)$ when $a=n-m+1$ and $b=1$. Note that for $(a,b)$-parking functions, if $\uR=(a, a+b, \dots, a+(n-1)b)$ and there are $m$ cars to park, then it is equivalent to parking $m$ cars with $\uR'=(a+(n-m)b, \dots, a+(n-1)b)$, which are counted by $\PF(a+(n-m)b, b, m)$.  Hence one only needs to consider the case $m=n$. The number of $(a, b)$-parking functions of length $m$ is well-known: $\vert \PF(a, b, m)\vert=a(a+mb)^{m-1}$.

As with parking functions, Pollak's circle argument applies to $(a,
b)$-parking functions: Let $G$ denote the group of all $m$-tuples
$(\pi_1,\dots,\pi_m)\in [a+mb]^m$ with componentwise addition modulo
$a+mb$. Let $H$ be the subgroup of $G$ generated by $(1,1,\dots,1)$. Then
every coset of $H$ contains exactly $a$ parking functions. Based on this combinatorial fact (for more details see Stanley \cite{Stanley1997}), we outline an effective method for generating a random $(a, b)$-parking function of length $m$. To select $\pi \in \PF(a, b, m)$ uniformly at random:
\begin{enumerate}
\item Pick an element $\pi \in (\mathbb{Z}/(a+mb)\mathbb{Z})^m$, where the equivalence class representatives are taken in $1, \dots, a+mb$.

\item For $k \in \{0, \dots, a+mb-1\}$, record $k$ if $\pi+k(1, \dots, 1)$ (modulo $a+mb$) is an $(a, b)$-parking function of length $m$, where $(1, \dots, 1)$ is a vector of length $m$. There should be exactly $a$ such $k$'s.

\item Pick one $k$ from (2) uniformly at random. Then $\pi+k(1, \dots, 1)$ is an $(a, b)$-parking function of length $m$ taken uniformly at random.
\end{enumerate}
The main takeaway from this procedure is that a random $(a, b)$-parking function of length $m$ could be generated by assigning $m$ cars independently on a circle of length $a+mb$ and then applying circular rotation.

The following lemma is an extension of Lemma \ref{simple} in Section \ref{sec:pf} from parking functions to $(a, b)$-parking functions.

\begin{lemma}\label{simple2}
We have
  \begin{equation*}
      \# \{\pi \in \PF(a, b, m): \pi_1=1\}=(a+b)(a+mb)^{m-2},   
\end{equation*}
which implies that
\begin{equation*}
      \# \{\pi \in \PF(a, b, m): \pi_1 \in \{1, 2, \dots, a\}\}=a(a+b)(a+mb)^{m-2}.    
\end{equation*}
\end{lemma}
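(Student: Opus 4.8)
The plan is to mirror the argument for Lemma~\ref{simple}, now using the $(a,b)$-circle version of Pollak's argument described just above the statement. First I would handle the trivial case $m=1$ separately (where $\PF(a,b,1)$ consists of the single preference $\pi=(1)$, and both claimed formulas reduce correctly, interpreting $(a+mb)^{m-2}=(a+b)^{-1}$ in the degenerate way or simply checking $m=1$ by hand). For $m\geq 2$, I would fix $\pi_1=1$ and assign the remaining $m-1$ cars $2,\dots,m$ independently on a circle of length $a+mb$, i.e.\ pick $(\pi_2,\dots,\pi_m)\in(\mathbb{Z}/(a+mb)\mathbb{Z})^{m-1}$ uniformly; this gives $(a+mb)^{m-2}$ choices per rotation class, and since exactly $a+b$ of the $a+mb$ circular rotations of such a configuration (together with the fixed first coordinate, suitably rotated) yield a valid $(a,b)$-parking function with first coordinate in the appropriate range, one expects the count $(a+b)(a+mb)^{m-2}$.

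The key step — and the main obstacle — is justifying why the number of valid rotations is $a+b$ rather than $a$. Here is the idea I would develop: the circle argument tells us that for an \emph{arbitrary} configuration of $m$ cars on the circle of length $a+mb$, exactly $a$ rotations produce an $(a,b)$-parking function. Now I want to count, among all configurations $\pi\in(\mathbb{Z}/(a+mb)\mathbb{Z})^m$ together with a choice of valid rotation, those for which the (rotated) first coordinate equals $1$. Equivalently, using permutation invariance of $(a,b)$-parking functions under $\mathfrak{S}_m$ acting on cars, I would instead count valid $(a,b)$-parking functions $\pi$ with $\pi_1=1$ directly. The cleanest route is: the total number of $(a,b)$-parking functions of length $m$ is $a(a+mb)^{m-1}$; I claim the number with $\pi_1=1$ is obtained by a deletion/restriction argument analogous to Lemma~\ref{lm:decomp}, namely that fixing $\pi_1=1$ and deleting it leaves a structure counted by a $(a+b,b,m-1)$-type object (since removing a car that parks in spot $1$ shifts the effective spot-capacity sequence from $(a,a+b,\dots,a+(m-1)b)$ to $(a+b,a+2b,\dots,a+(m-1)b)$, which is the capacity sequence for $\PF(a+b,b,m-1)$). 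Then $\#\{\pi\in\PF(a,b,m):\pi_1=1\} = |\PF(a+b,b,m-1)| = (a+b)\bigl((a+b)+(m-1)b\bigr)^{m-2} = (a+b)(a+mb)^{m-2}$, as desired.

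For the second formula, I would argue exactly as in Lemma~\ref{simple}: by the analogue of the second observation following~\eqref{pigeon}, if $\pi_1$ takes any value in $\{1,2,\dots,a\}$ then changing it to any other value in $\{1,2,\dots,a\}$ preserves the $(a,b)$-parking property (since a car landing among the first $a$ spots imposes no binding constraint — $\lambda_1\le u_1=a$ is the only condition and it is met by any such value). Hence $\#\{\pi\in\PF(a,b,m):\pi_1\in\{1,\dots,a\}\} = a\cdot\#\{\pi\in\PF(a,b,m):\pi_1=1\} = a(a+b)(a+mb)^{m-2}$. The one subtlety to verify carefully is the claim that deleting a car in spot $1$ produces precisely a $\PF(a+b,b,m-1)$ structure; I would spell this out via the weakly-increasing-rearrangement characterization $\lambda_i\le a+(i-1)b$, noting that if $\lambda_1=1$ then $\lambda_2,\dots,\lambda_m$ rearrange into $\mu_1\le\cdots\le\mu_{m-1}$ with $\mu_i=\lambda_{i+1}$ satisfying $\mu_i\le a+ib = (a+b)+(i-1)b$, which is exactly the defining inequality for $\PF(a+b,b,m-1)$, and conversely.
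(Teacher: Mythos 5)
Your proposal is correct and, despite the exploratory first paragraph about counting valid rotations on the circle, its ``cleanest route'' is exactly the paper's own proof: the paper likewise observes that the condition for $\pi$ with $\pi_1=1$ to lie in $\PF(a,b,m)$ is equivalent to $(\pi_2,\dots,\pi_m)\in\PF(a+b,b,m-1)$, giving $(a+b)(a+mb)^{m-2}$, and then multiplies by $a$ for the second formula. The circle-rotation discussion is superfluous to the final argument, and your verification via the weakly increasing rearrangement $\mu_i=\lambda_{i+1}\le a+ib=(a+b)+(i-1)b$ correctly fills in the one detail the paper leaves implicit.
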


\begin{proof}
The lemma may be proved by modifying the circular argument in the proof of Lemma \ref{simple}, but we present an alternative proof here. Note that by the definition of an $(a, b)$-parking function, changing $\pi_1 = 1$ to any value less than or equal to $a$ will still keep $\pi$ an $(a, b)$-parking function. The number of $(a, b)$-parking functions $\pi \in \PF(a, b, m)$ with $\pi_1 \in \{1, 2, \dots, a\}$ is thus $a$ times the number of parking functions $\pi \in \PF(a, b, m)$ with $\pi_1=1$ (actually, with $\pi_1=c$ where $c$ is any value $1\leq c\leq a$). We further note that the restricted parking condition on $\pi_1$ is equivalent to having $(\pi_2, \dots, \pi_m) \in \PF(a+b, b, m-1)$, for which there are $(a+b)(a+mb)^{m-2}$ possibilities.
\end{proof}

Previously, we adopted shorthand notations such as $\slev(\pi)$ and $\one(\pi)$ for different statistics on the parking function $\pi$. We will now provide refinements of our generating functions for parking functions and extend them to generic $(a, b)$-parking functions. Because there are more statistics to take into account, instead of inventing more shorthand notations, we will take a more direct approach. As in Section \ref{sec:BFS}, we denote by $\# i(\pi)$ the count of the number $i$ in the parking function $\pi$, so for example, $\# 1(\pi)$ coincides with $\one(\pi)$ in our earlier notation while $\#1(\pi) + \cdots +\#(n-m+1)(\pi)$ equals $\slev(\pi)$. When it is clear from the context, we drop the dependence on $\pi$ from the notation and simply write $\# i$.

Before we proceed with our enumerative results, let us describe a decomposition akin to Lemma~\ref{lm:decomp} for $(a,b)$-parking functions.

\begin{lemma}\label{lm:decomp-a-b}
Consider a function $\pi: [m] \to [a+(m-1)b]$. Fix the elements of $\pi$ that are equal to one of $1, 2, \dots, a$, and suppose that there are $s\geq 0$ such elements. Let the other elements be $\pi_{j_1} < \pi_{j_2} < \cdots < \pi_{j_{m-s}}$, and define a new function $\tilde{\pi}: [m-s] \to [m-1]$ by $\tilde{\pi}_i = \lceil \frac1{b} (\pi_{j_i} - a) \rceil$. Then $\pi$ is a parking function in $\PF(a,b,m)$ if and only if $\tilde{\pi}$ is a parking function in $\PF(m-s,m-1)$.

Every parking function $\pi \in \PF(a,b,m)$ can be uniquely reconstructed from the following information:
\begin{itemize}
    \item the subset $S \subseteq [m]$ that maps to $[a]$,
    \item the map from $S$ to $[a]$ obtained by restricting $\pi$ to $S$,
    \item the map $\tilde{\pi}$,
    \item the map $\alpha: [m-s] \to [b]$ that satisfies $\alpha_i \equiv \pi_{j_i} \mod b$.
\end{itemize}
\end{lemma}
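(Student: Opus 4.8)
\textbf{Proof plan for Lemma~\ref{lm:decomp-a-b}.}

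The plan is to reduce the statement for $(a,b)$-parking functions to the already-established Lemma~\ref{lm:decomp} for ordinary parking functions, by extracting from $\pi$ the ``coarse'' information (which residue class mod $b$ above $a$ each large entry lands in) and showing this coarse information is exactly a parking function of the type handled by Lemma~\ref{lm:decomp}, while the ``fine'' information (the exact residue mod $b$) is completely unconstrained. First I would use the characterization of $(a,b)$-parking functions via the weakly increasing rearrangement: $\pi$ is a parking function in $\PF(a,b,m)$ iff $\lambda_i \le a+(i-1)b$ for all $i$, where $\lambda_1 \le \cdots \le \lambda_m$ is the sorted sequence. Equivalently, in the cumulative-count form analogous to~\eqref{pigeon}, $\pi \in \PF(a,b,m)$ iff $\#\{k : \pi_k \le a+(i-1)b\} \ge i$ for $i = 1,\dots,m$; more precisely one wants $\#\{k : \pi_k \le j\} \ge i$ whenever $j \ge a+(i-1)b$, i.e. whenever $\lceil (j-a)/b\rceil \ge i-1$, but the binding constraints occur exactly at the thresholds $j = a+(i-1)b$.

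Next I would translate the threshold conditions through the substitution $\tilde\pi_i = \lceil (\pi_{j_i}-a)/b\rceil$. The key observation is that for an entry $\pi_{j_i} > a$, we have $\pi_{j_i} \le a+(h-1)b$ if and only if $\lceil(\pi_{j_i}-a)/b\rceil \le h-1$, i.e. $\tilde\pi_i \le h-1$; and entries $\le a$ always satisfy $\pi_k \le a+(h-1)b$ for $h \ge 1$. Hence, exactly as in the proof of Lemma~\ref{lm:decomp}, for $h \ge 1$,
\begin{equation*}
\#\{k : \pi_k \le a+(h-1)b\} = s + \#\{k : \tilde\pi_k \le h-1\},
\end{equation*}
so the $(a,b)$-parking condition becomes $s + \#\{k : \tilde\pi_k \le h-1\} \ge h$ for $h = 1,\dots,m$, i.e. $\#\{k : \tilde\pi_k \le h'\} \ge h'+1-s$ for $h' = 0,\dots,m-1$. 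This is precisely condition~\eqref{eq:modified_cond}, which (as shown in the proof of Lemma~\ref{lm:decomp}) is equivalent to $\tilde\pi$ being a parking function in $\PF(m-s,m-1)$, the conditions for $h' < s$ being vacuous. One must also check the non-threshold values of $j$ impose no extra constraint: since $\#\{k:\pi_k \le j\}$ is nondecreasing in $j$ and the right-hand bound $i$ only increases at the thresholds, the threshold conditions suffice. This proves the ``iff'' in the first paragraph of the lemma.

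For the reconstruction statement, I would argue that the four listed pieces of data determine $\pi$ and can be freely and independently prescribed (subject only to $\tilde\pi \in \PF(m-s,m-1)$ and the map on $S$ being arbitrary into $[a]$): given $S$, the restriction $\pi|_S$, the map $\tilde\pi$, and $\alpha$, one recovers the multiset of large values as $\{a + (\tilde\pi_i-1)b + (\alpha_i \bmod b)\}$ with the convention making $\alpha_i \in [b]$ correspond to the residue, and then the indices $j_1 < \cdots < j_{m-s} = [m]\setminus S$ are filled in increasing order of value --- note that distinct pairs $(\tilde\pi_i,\alpha_i)$ give distinct values, and when $\tilde\pi_i = \tilde\pi_{i'}$ with $i<i'$ one needs $\alpha_i \le \alpha_{i'}$ to be consistent with $\pi_{j_i} < \pi_{j_{i'}}$; conversely any $\pi$ produces such data. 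The one point requiring care --- and the main (minor) obstacle --- is getting the ceiling/residue bookkeeping exactly right at the boundary: for a large entry $v > a$, writing $v - a = (\tilde\pi_i - 1)b + r$ with $1 \le r \le b$ gives $\tilde\pi_i = \lceil (v-a)/b\rceil$ and $r \equiv v \pmod b$ with $r \in [b]$, so $\alpha_i = r$; one should confirm this is a genuine bijection between $\{v : a < v \le a+(m-1)b\}$ and pairs $(\tilde\pi_i,\alpha_i) \in [m-1]\times[b]$, and that the ordering constraint among the $j_i$ with equal $\tilde\pi$-value is precisely the weakly-increasing condition on the corresponding $\alpha$-values. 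With that bijection in hand, the reconstruction is immediate and the lemma follows.
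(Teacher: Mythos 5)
Your proposal is correct and follows essentially the same route as the paper: the authors likewise rewrite the $(a,b)$-parking condition $\#\{k:\pi_k\le a+(i-1)b\}\ge i$ through the ceiling map, absorb the $s$ small entries, and shift the index to land exactly on the characterization~\eqref{pigeon} of $\PF(m-s,m-1)$, while the reconstruction is handled by inverting $\pi_{j_i}$ from the pair $(\tilde\pi_i,\alpha_i)$. Your additional bookkeeping about non-threshold values and the bijection between large entries and pairs in $[m-1]\times[b]$ is more detail than the paper supplies, but it changes nothing of substance.
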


\begin{proof}
    Note that $\tilde{\pi}$ maps to values between $1$ and $\lceil \frac1{b} (a+(m-1)b - a) \rceil = m-1$. By definition, $\pi$ is an $(a,b)$-parking function if and only if the $i$-th smallest element is at most $a+(i-1)b$, or equivalently if
    \begin{equation*}
        \big| \big\{k\,:\, \pi_k \leq a + (i-1)b \big\} \big|\geq i
    \end{equation*}
    for every $i \in [m]$. We can rewrite this as
    \begin{equation*}
    \big| \big\{k\,:\, \tfrac{1}{b}(\pi_k - a) \leq i-1 \big\} \big| = \big| \big\{k\,:\, \lceil \tfrac{1}{b}(\pi_k - a) \rceil \leq i-1 \big\} \big| \geq i.
    \end{equation*}
    All $k$ for which $\pi_k \leq a$ trivially belong to the set on the left. Since there are $s$ such elements by assumption, we thus find that this is equivalent to
    \begin{equation*}
    \big| \big\{h\,:\, \tilde{\pi}_h \leq i-1 \} \big| \geq  i-s,
    \end{equation*}
    and an index shift yields
    \begin{equation*}
    \big| \big\{h\,:\, \tilde{\pi}_h \leq i \} \big| \geq  i-s+1.
    \end{equation*}
    This is exactly the inequality that characterizes a parking function in $\PF(m-s,m-1)$, see~\eqref{pigeon}.

    To see that $\pi$ can be uniquely reconstructed from the given quadruple, note that $S$ and the restriction of $\pi$ to $S$ determine all values of $\pi$ that are less than or equal to $a$. The remaining values can be recovered from the identity $\pi_{j_i} = b(\tilde{\pi}_i - 1) + \alpha_i$.
    \end{proof}

Recall that $\PF(m, n)$ is a special case of $\PF(a, b, m)$ when $a=n-m+1$ and $b=1$. The following proposition is a refinement of as well as an extension to Corollary \ref{pf2}. 

\begin{proposition}\label{last1}
\begin{equation*}
\sum_{\pi \in \PF(a, b, m)}x_1^{\#1}\cdots x_a^{\#a} =(x_1+\cdots+x_a)(x_1+\cdots+x_a+mb)^{m-1}.
\end{equation*}
\end{proposition}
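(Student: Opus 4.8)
The plan is to use Pollak's circle argument in the refined form described above, tracking not just \emph{whether} a car lands in spots $1,\dots,a$ but \emph{which} of those spots it lands in, combined with the decomposition of Lemma~\ref{lm:decomp-a-b}. First I would set up the probabilistic model: place $m$ cars independently and uniformly on a circle with $a+mb$ spots; by Pollak's argument each placement admits exactly $a$ rotations that produce an $(a,b)$-parking function of length $m$, so the rotations are equiprobable and there are $a(a+mb)^{m-1}$ parking functions in total. The key observation I would establish is that in a uniformly random $(a,b)$-parking function, the vector $(\#1,\dots,\#a)$ has the following structure: each car independently either prefers one of the spots $1,\dots,a$ (and then its preference is uniform among those $a$ spots) or it does not; the "does not" outcome carries the remaining weight that is eventually distributed according to the parking function $\tilde\pi$ on the smaller domain. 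Put differently, summing $x_1^{\#1}\cdots x_a^{\#a}$ over all parking functions should factor as a product over the $m$ cars.

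Concretely, I would run the circle argument keeping generating-function bookkeeping. Assign each car a label from $\{1,\dots,a+mb\}$; after choosing one of the $a$ valid rotations, a car contributes $x_j$ to the monomial precisely when its (rotated) preferred spot is $j \in \{1,\dots,a\}$, and contributes $1$ otherwise. The crucial point — which I would justify via Lemma~\ref{lm:decomp-a-b} or directly via the balance conditions — is that for a uniformly random parking function, each car independently lands in spot $j\in\{1,\dots,a\}$ with probability $1/(a+mb)$ for each such $j$, and lands "outside" $\{1,\dots,a\}$ with probability $mb/(a+mb)$. Hence the full weighted sum equals
\begin{equation*}
a(a+mb)^{m-1} \cdot \prod_{\text{cars}} \frac{x_1+\cdots+x_a + mb}{a+mb} \cdot (\text{correction}),
\end{equation*}
but one has to be careful: the first car plays a distinguished role under circular rotation. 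I would instead organize the computation so that one car is used to "fix the gauge" (absorbing the factor of $a$ and one factor of the linear form), exactly as in the proof of Corollary~\ref{pf1}/\ref{pf2} where the generating function came out as $(n-m+1)x\bigl((n-m+1)x+m\bigr)^{m-1}$; setting all $x_i$ equal recovers Corollary~\ref{pf2} with $a=n-m+1$, $b=1$, which is a useful sanity check.

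The cleanest route is probably to combine Lemma~\ref{lm:decomp-a-b} with a direct count. By that lemma, a parking function $\pi\in\PF(a,b,m)$ with $\slev(\pi)=s$ (i.e.\ $s$ entries landing in $\{1,\dots,a\}$) corresponds to a choice of the set $S$ of size $s$, a map $S\to[a]$, a parking function $\tilde\pi\in\PF(m-s,m-1)$, and a map $\alpha\colon[m-s]\to[b]$. The weight $x_1^{\#1}\cdots x_a^{\#a}$ only sees the map $S\to[a]$, contributing $\sum$ over such maps of $\prod_{i\in S} x_{\pi_i}$, which equals $(x_1+\cdots+x_a)^s$ summed over the $\binom{m}{s}$ choices of $S$; the factors $\tilde\pi$ and $\alpha$ contribute $\#\PF(m-s,m-1)\cdot b^{m-s} = s(m-1)^{m-s-1} b^{m-s}$. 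Summing,
\begin{equation*}
\sum_{\pi\in\PF(a,b,m)} x_1^{\#1}\cdots x_a^{\#a}
= \sum_{s=1}^{m} \binom{m}{s}(x_1+\cdots+x_a)^s \, s\,(m-1)^{m-s-1} b^{m-s}.
\end{equation*}
Writing $X = x_1+\cdots+x_a$, the right-hand side is $\sum_{s\ge1}\binom{m}{s} s (m-1)^{m-s-1} b^{m-s} X^s$, and I would recognize this via the identity $\sum_{s} \binom{m}{s} s z^s w^{m-s} = mz(z+w)^{m-1}$ (differentiate the binomial theorem): with $z = X$ and $w = (m-1)b$, and accounting for the extra $(m-1)^{-1}$, this collapses to $X(X+mb)^{m-1}$, which is the claim. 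The main obstacle is getting the bookkeeping of the "distinguished car" exactly right so that the powers of $(m-1)$, $m$, and $b$ combine correctly — i.e.\ verifying that $\#\PF(m-s,m-1) = s(m-1)^{m-s-1}$ and that the index $h=m-1$ in $\PF(m-s,m-1)$ makes the telescoping $\sum_s \binom{m}{s}s(m-1)^{m-s-1}b^{m-s}X^s = X(X+mb)^{m-1}$ work out, including the boundary terms $s=m$ (where $(m-1)^{-1}$ formally appears but the binomial coefficient structure saves it) and the degenerate small-$m$ cases.
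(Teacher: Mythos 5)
Your third paragraph is essentially the paper's proof: decompose $\pi$ via Lemma~\ref{lm:decomp-a-b} into the set $S$, the map $S\to[a]$ (which alone carries the weight $(x_1+\cdots+x_a)^s$), the parking function $\tilde\pi\in\PF(m-s,m-1)$, and the map $\alpha\colon[m-s]\to[b]$, then sum over $s$. However, there is a concrete error at exactly the point you flag as "the main obstacle": the cardinality of $\PF(m-s,m-1)$ is \emph{not} $s(m-1)^{m-s-1}$. Applying $|\PF(m',n')|=(n'-m'+1)(n'+1)^{m'-1}$ with $m'=m-s$ and $n'=m-1$ gives $(n'+1)^{m'-1}=m^{m-s-1}$, so the correct count is $s\,m^{m-s-1}$. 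With your value the sum does not collapse to the claimed answer: using your own identity with $z=X$ and $w=(m-1)b$ you would get
\begin{equation*}
\frac{1}{m-1}\sum_{s}\binom{m}{s}s\,X^s\bigl((m-1)b\bigr)^{m-s}=\frac{m}{m-1}\,X\bigl(X+(m-1)b\bigr)^{m-1},
\end{equation*}
which is not $X(X+mb)^{m-1}$; the asserted "collapse" is unjustified and in fact false for the sum as you wrote it.

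With the corrected count the argument closes cleanly and without any $(m-1)^{-1}$ artifact or boundary-case issues: write $\binom{m}{s}s=m\binom{m-1}{s-1}$, so that
\begin{equation*}
\sum_{s=1}^{m}\binom{m}{s}X^s\,s\,m^{m-s-1}b^{m-s}
=\sum_{s=1}^{m}\binom{m-1}{s-1}X^s(mb)^{m-s}
=X\,(X+mb)^{m-1},
\end{equation*}
which is exactly how the paper finishes. The probabilistic/circle-argument framing in your first two paragraphs is not needed once you commit to the Lemma~\ref{lm:decomp-a-b} route, and the claim there that each car independently lands in a given spot $j\in\{1,\dots,a\}$ with probability $1/(a+mb)$ would itself require justification (it is not literally true for the first car versus the others without the gauge-fixing you allude to), so I would drop that material rather than try to make it rigorous.
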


\begin{proof}
We use the decomposition in Lemma~\ref{lm:decomp-a-b}. If we fix $s$, there are $\binom{m}{s}$ possibilities for the set $S$. The restriction of $\pi$ to $S$ gives rise to a term $(x_1+\cdots+x_a)^s$ since every element of $S$ can map to any of the numbers $1,2,\ldots,a$. Finally, there are $s m^{m-s-1}$ possibilities for $\tilde{\pi}$ and $b^{m-s}$ possibilities for $\alpha$. Hence we have
\begin{align*}
\sum_{\pi \in \PF(a, b, m)}x_1^{\#1}\cdots x_a^{\#a} &= \sum_{s=1}^m \binom{m}{s} (x_1+\cdots+x_a)^s s m^{m-s-1} b^{m-s} \\
&= \sum_{s=1}^m \binom{m-1}{s-1} (x_1+\cdots+x_a)^s (mb)^{m-s} \\
&= (x_1+\cdots+x_a)(x_1+\cdots+x_a+mb)^{m-1},
\end{align*}
which completes the proof.
\end{proof}



\begin{remark}
Proposition 7.3 can also be obtained from  Corollary \ref{pf2} and the observation that any $\boldsymbol{u}$-parking function can be obtained uniquely by taking a classical parking function $(\pi_1, \dots, \pi_n)$ and then replacing each $i$ by an integer in the set $\{u_{i-1}+1, \dots, u_i\}$. In particular, the entry 1 can be replaced by any of $\{1, \dots, a\}$. This idea first appeared in \cite{PS1} to express the number of $\boldsymbol{u}$-parking functions in terms of the volume of the parking polytope.
\end{remark}


Recall that for classical parking functions $\pi \in \PF(n, n)$ (corresponding to $a=1$ and $b=1$), the statistics $\# 1(\pi)$ ($\one(\pi)$) and $\slev(\pi)$ coincide. The following proposition, which extends Proposition~\ref{last1} by also including $\lel(\pi)$, is thus also related to Corollary \ref{cor:pf1} (and Theorem \ref{thm:expl-formula}).

\begin{proposition}\label{last2}
\begin{align*}
\sum_{\pi \in \PF(a, b, m)}&x_1^{\#1}\cdots x_a^{\#a}y^{\lel(\pi)} \\
&= \sum_{j=1}^a x_j y (x_1 + \cdots + x_{j-1} + x_j y + x_{j+1} + \cdots + x_a + b) \\
&\qquad \times (x_1 + \cdots + x_{j-1} + x_j y + x_{j+1} + \cdots + x_a + mb)^{m-2} \\
&\qquad + b(m-1) (x_1 + \cdots + x_a) y (x_1 + \cdots + x_a + y + mb - 1)^{m-2}.
\end{align*}
\end{proposition}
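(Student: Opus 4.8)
The plan is to combine the decomposition of Lemma~\ref{lm:decomp-a-b} with Pollak's circle argument, splitting on whether the first car's preferred spot lands in $\{1,2,\dots,a\}$, exactly in parallel with the proof of Theorem~\ref{thm:expl-formula}. Write $\pi\in\PF(a,b,m)$ as the quadruple $(S,\pi|_S,\tilde\pi,\alpha)$ from Lemma~\ref{lm:decomp-a-b}, with $s=|S|$ the number of entries in $[a]$. The generating variable $y$ marks $\lel(\pi)$, the number of cars sharing car $1$'s preferred spot; which factor of the quadruple controls this depends on whether $1\in S$.

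First I would handle the case $1\notin S$ (the first car's spot is not in $[a]$). Then $\lel(\pi)$ is determined by $\tilde\pi$ together with $\alpha$: cars $k\notin S$ with $\pi_k=\pi_1$ are exactly those with $\tilde\pi_k=\tilde\pi_1$ \emph{and} $\alpha_k=\alpha_1$. By Pollak's argument applied to $\PF(m-s,m-1)$, in a uniformly random such $\tilde\pi$ each car other than the first independently agrees with car $1$'s value with probability $1/m$; additionally $\alpha$ is a uniform independent map into $[b]$, so each of the $m-s-1$ non-first cars outside $S$ shares car $1$'s actual spot with probability $\frac1{mb}$, independently. Summing over $s$, the $\binom{m}{s}$ choices of $S$, the $b^{m-s}$ choices of $\alpha$ weighted so that a match contributes $y$, and the $(x_1+\cdots+x_a)^s$ weight from $\pi|_S$, together with the $s m^{m-s-1}$ count of $\tilde\pi$ refined binomially in $y$, yields after the substitution $x_1+\cdots+x_a\mapsto$ (its value) and collapsing the binomial sum a term of the shape $b(m-1)(x_1+\cdots+x_a)\,y\,(x_1+\cdots+x_a+y+mb-1)^{m-2}$, matching the last line of the claim. (The $-1$ inside comes from the single distinguished match-slot carrying $y$ rather than $1$ among the $mb$ equally likely spots, exactly as $m-1+y$ arose in Theorem~\ref{thm:expl-formula}.)

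Next, the case $1\in S$: now $\lel(\pi)$ is completely determined by $\pi|_S$, since the relevant spot $\pi_1=j$ lies in $[a]$ and the cars matching it are precisely the elements of $S$ mapped to $j$. Condition on $\pi_1=j\in[a]$. The other $s-1$ elements of $S$ map independently and uniformly into $[a]$, contributing $\prod_{i\neq j}x_i + x_j y$ per element (i.e. $x_j y$ if it matches slot $j$, $x_i$ otherwise); the distinguished car $1$ itself contributes $x_j y$. The set $S\setminus\{1\}$ has $\binom{m-1}{s-1}$ choices, $\tilde\pi$ has $sm^{m-s-1}$ choices, and $\alpha$ has $b^{m-s}$ choices. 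Summing over $s\ge1$ and collapsing the binomial sum in $s$ (again via $\binom{m}{s}s m^{m-s-1}=\binom{m-1}{s-1}m^{m-s}$, then pairing with $b^{m-s}$ to get $(mb)^{m-s}$) produces, for each fixed $j$, a contribution
\[
x_j y\,\bigl(x_1+\cdots+x_{j-1}+x_j y+x_{j+1}+\cdots+x_a+b\bigr)\bigl(x_1+\cdots+x_{j-1}+x_j y+x_{j+1}+\cdots+x_a+mb\bigr)^{m-2},
\]
and summing over $j=1,\dots,a$ gives the first two lines of the proposition. Adding the two cases completes the proof.

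The main obstacle I anticipate is bookkeeping rather than conceptual: correctly tracking how the marking variable $y$ threads through the two ``layers'' of randomness (the reduced parking function $\tilde\pi$ via Pollak, and the residue map $\alpha$) in the first case, and making sure the binomial-sum collapses are done with the right indices so that the exponent $m-2$ and the constant terms $b$ versus $mb$ come out exactly as stated. A secondary point to be careful about is the degenerate small-$s$ and $m=1$ boundary behavior, which should be absorbed automatically by the conventions in Lemma~\ref{lm:decomp-a-b} (as noted in the remark after Lemma~\ref{lm:decomp}), but I would double-check that the $s=1$ term in the $1\in S$ case is handled consistently with $\tilde\pi$ ranging over $\PF(m-1,m-1)$.
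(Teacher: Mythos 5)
Your proposal is correct and follows essentially the same route as the paper: decompose $\pi$ via Lemma~\ref{lm:decomp-a-b} into $(S,\pi|_S,\tilde\pi,\alpha)$, split on whether car $1$'s spot lies in $[a]$, and in the latter case thin the Pollak/binomial matching probability $1/m$ for $\tilde\pi$ by the extra factor $1/b$ from $\alpha$ (the paper packages this as the substitution $y\mapsto 1+\frac{y-1}{b}$ in Corollary~\ref{pf1}). Only minor bookkeeping slips: with $1\notin S$ the set $S$ has $\binom{m-1}{s}$ (not $\binom{m}{s}$) choices, and in the $1\in S$ case the collapse uses $\binom{m-1}{s-1}\,s\,m^{m-s-1}b^{m-s}$ split as $s=(s-1)+1$ to produce the factor $(u+b)$, rather than the identity you cite — but the stated contributions are the correct ones.
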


\begin{proof}

We follow the lines of the previous proof, making use of the decomposition in Lemma~\ref{lm:decomp-a-b}. As in the proof of Theorem \ref{thm:expl-formula}, we distinguish two cases based on the preference of the first car.

\begin{itemize}
\item The first car prefers one of the spots $1,2,\ldots,a$: in this case, $1$ is an element of $S$, and there are $\binom{m-1}{s-1}$ possibilities to choose the rest of the set $S$. If $\pi_i = j$, then we need to include a factor $x_jy$ for the first car, and the remaining elements of $S$ give rise to the term $(x_1 + \cdots + x_{j-1} + x_j y + x_{j+1} + \cdots + x_a)^{s-1}$. Thus the contribution of this case to the generating function is
\begin{align*}
\sum_{j=1}^a &\sum_{s=1}^m \binom{m-1}{s-1} x_jy (x_1 + \cdots + x_{j-1} + x_j y + x_{j+1} + \cdots + x_a)^{s-1} s m^{m-s-1} b^{m-s} \\
&= \sum_{j=1}^a x_j y (x_1 + \cdots + x_{j-1} + x_j y + x_{j+1} + \cdots + x_a + b) \\
&\qquad \times (x_1 + \cdots + x_{j-1} + x_j y + x_{j+1} + \cdots + x_a + mb)^{m-2}. 
\end{align*}
\item The first car prefers a spot $> a$: in this case, $1$ does not belong to the set $S$, thus $j_1 = 1$. There are $\binom{m-1}{s}$ possibilities for the set $S$, which now plays no role concerning the statistic $\lel$. So we obtain the factor $\binom{m-1}{s}(x_1 + \cdots + x_a)^s$ as in Proposition~\ref{last1} from the set $S$ and the restriction of $\pi$ to $S$. For the remaining elements, we note that $\pi_{j_i} = \pi_{j_1} = \pi_1$ if and only if $\tilde{\pi}_i = \tilde{\pi}_1$ and $\alpha_i = \alpha_1$. Thus every element except for $\tilde{\pi}_1$ that contributes to $\lel(\tilde{\pi})$ contributes to $\lel(\pi)$ with probability $\frac{1}{b}$, and $\lel(\pi) - 1$ follows a binomial distribution given $\lel(\tilde{\pi})$. The contribution of this case to the generating function is thus
\begin{equation*}
\sum_{s=1}^{m-1} \binom{m-1}{s}(x_1 + \cdots + x_a)^s b^{m-s} \sum_{\tilde{\pi} \in \PF(m-s,m-1)} y \Big( 1 + \frac{y-1}{b} \Big)^{\lel(\tilde{\pi})-1}.
\end{equation*}
In view of Corollary~\ref{pf1}, this is equal to
\begin{equation*}
\sum_{s=1}^{m-1} \binom{m-1}{s}(x_1 + \cdots + x_a)^s b^{m-s} s y \Big( m + \frac{y-1}{b} \Big)^{m-s-1},
\end{equation*}
which simplifies to
\begin{equation*}
b(m-1) (x_1 + \cdots + x_a) y (x_1 + \cdots + x_a + y + mb - 1)^{m-2}.
\end{equation*}
\end{itemize}
Combining the two cases, we obtain the stated formula.
\end{proof}

If we specialize Proposition~\ref{last2} to $a = 1$, we obtain a generating function that is symmetric in $x$ and $y$, which implies that the statistics $\#1$ ($\one$) and $\lel$ are equidistributed for $(1,b)$-parking functions.

\begin{corollary}\label{last2-special}
\begin{align}\label{abgen1}
\sum_{\pi \in \PF(1, b, m)} x^{\#1(\pi)}y^{\lel(\pi)}=xy\left[(m-1)b(x+y+mb-1)^{m-2}+(xy+b)(xy+mb)^{m-2}\right].
\end{align}
\end{corollary}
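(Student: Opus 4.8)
The plan is to derive Corollary~\ref{last2-special} as the $a=1$ specialization of Proposition~\ref{last2}, with no additional combinatorial input required. First I would substitute $a=1$ directly into the generating function identity of Proposition~\ref{last2}. The sum $\sum_{j=1}^{a}$ collapses to its single term $j=1$; writing $x_1 = x$, both of the running sums $x_1 + \cdots + x_{j-1}$ and $x_{j+1} + \cdots + x_a$ are empty, so the ``shifted'' linear form $x_1 + \cdots + x_{j-1} + x_j y + x_{j+1} + \cdots + x_a$ reduces to $xy$. Hence the first summand becomes $xy(xy+b)(xy+mb)^{m-2}$, and the second summand becomes $b(m-1)\,xy\,(x+y+mb-1)^{m-2}$. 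Factoring out $xy$ from the sum of the two gives precisely the right-hand side of \eqref{abgen1}.

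Next I would record the symmetry statement. In the bracketed expression on the right of \eqref{abgen1}, the polynomial $x+y+mb-1$ is symmetric in $x$ and $y$, and so are $xy+b$ and $xy+mb$; the prefactor $xy$ is symmetric as well. Consequently the entire generating function is invariant under $x \leftrightarrow y$. Since the coefficient of $x^i y^j$ on the left-hand side equals $\#\{\pi \in \PF(1,b,m) : \#1(\pi)=i,\ \lel(\pi)=j\}$, this invariance is exactly the assertion that the pair $(\#1(\pi),\lel(\pi))$ is symmetrically distributed over $\PF(1,b,m)$; in particular $\#1$ and $\lel$ are equidistributed.

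There is essentially no obstacle beyond the bookkeeping of the specialization; the one point that deserves a sentence is making explicit that for $a=1$ the index range $j \neq 1$ contributes nothing and the shifted form genuinely collapses to $xy$. As a consistency check I would verify that setting $y=1$ recovers the $a=1$ case of Proposition~\ref{last1}, namely $\sum_{\pi \in \PF(1,b,m)} x^{\#1(\pi)} = x(x+mb)^{m-1}$, and that setting $b=1$ recovers the classical specialization ($n=m$, $a=1$) of Corollary~\ref{cor:pf1}.
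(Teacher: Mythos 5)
Your proposal is correct and matches the paper exactly: the paper also obtains Corollary~\ref{last2-special} by specializing Proposition~\ref{last2} to $a=1$ (collapsing the $j$-sum to its single term and setting $x_1=x$), and then reads off the $x\leftrightarrow y$ symmetry to conclude that $\#1$ and $\lel$ are equidistributed on $\PF(1,b,m)$. Your consistency checks at $y=1$ and $b=1$ are a nice bonus but not needed.
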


Specializing the generating function in Corollary~\ref{last2-special} further by setting $x=1$ or $y=1$, we immediately obtain
the distributions of $\lel(\pi)$ and $\one(\pi)$. These are given in the following two corollaries.

\begin{corollary}
Taking $x=1$ in (\ref{abgen1}), we have
\begin{equation*}
\sum_{\pi \in \PF(1, b, m)} y^{\lel(\pi)}=y(y+mb)^{m-1}.
\end{equation*}
\end{corollary}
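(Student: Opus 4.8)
The plan is to specialize the already-proven generating function of Corollary~\ref{last2-special} by setting $x = 1$ in~\eqref{abgen1}. This is purely a routine algebraic simplification, so the "proof" is short, but let me lay out the steps.

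First I would substitute $x = 1$ into the right-hand side of~\eqref{abgen1}. This yields
\begin{equation*}
\sum_{\pi \in \PF(1, b, m)} y^{\lel(\pi)} = y\left[(m-1)b(1+y+mb-1)^{m-2} + (y+b)(y+mb)^{m-2}\right].
\end{equation*}
Next I would observe that $1 + y + mb - 1 = y + mb$, so both terms inside the bracket share the factor $(y+mb)^{m-2}$. Factoring it out gives
\begin{equation*}
y\left[(m-1)b + y + b\right](y+mb)^{m-2} = y\left[mb + y\right](y+mb)^{m-2}.
\end{equation*}
Finally, combining $(y + mb)$ with $(y+mb)^{m-2}$ produces $(y+mb)^{m-1}$, and the result $\sum_{\pi \in \PF(1, b, m)} y^{\lel(\pi)} = y(y+mb)^{m-1}$ follows.

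There is essentially no obstacle here: the only thing to be careful about is that $\one(\pi) = \#1(\pi)$ so that setting $x=1$ correctly marginalizes out that statistic, leaving the univariate generating function for $\lel(\pi)$ alone; this is immediate from the definition of the bivariate sum. As a sanity check, one can note that setting $y=1$ additionally recovers the known count $|\PF(1,b,m)| = (1+mb)^{m-1} = a(a+mb)^{m-1}$ with $a=1$, consistent with the cardinality formula stated earlier in Section~\ref{sec:ab}.
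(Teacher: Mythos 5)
Your computation is correct and matches the paper's (implicit) argument exactly: the corollary is obtained by substituting $x=1$ into \eqref{abgen1}, noting $1+y+mb-1=y+mb$, and factoring to get $y(y+mb)^{m-1}$. No further comment is needed.
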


\begin{corollary}
Taking $y=1$ in (\ref{abgen1}), we have
\begin{equation*}
\sum_{\pi \in \PF(1, b, m)} x^{\one(\pi)}=x(x+mb)^{m-1}.
\end{equation*}
\end{corollary}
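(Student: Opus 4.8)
The plan is simply to substitute $y=1$ into the identity \eqref{abgen1} of Corollary~\ref{last2-special} and collect terms. Indeed, setting $y=1$ on the right-hand side gives
\begin{equation*}
x\left[(m-1)b(x+mb)^{m-2}+(x+b)(x+mb)^{m-2}\right],
\end{equation*}
and factoring out the common power $(x+mb)^{m-2}$ reduces the bracket to $(m-1)b + x + b = x + mb$, so the product telescopes to $x(x+mb)^{m-1}$. On the left-hand side, $y=1$ erases the dependence on $\lel(\pi)$, leaving $\sum_{\pi \in \PF(1,b,m)} x^{\#1(\pi)}$, and $\#1(\pi)$ is exactly $\one(\pi)$ by the convention fixed in Section~\ref{sec:BFS}. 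This yields the claimed formula.

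Alternatively, one can obtain the statement directly from Proposition~\ref{last1}: taking $a=1$ there gives $\sum_{\pi \in \PF(1,b,m)} x_1^{\#1} = x_1(x_1 + mb)^{m-1}$, which is the assertion after renaming $x_1$ to $x$. Either route is immediate; there is no real obstacle, the only thing to check is the harmless bookkeeping that $\#1(\pi)=\one(\pi)$ and that the algebraic simplification of the bracket is carried out correctly.
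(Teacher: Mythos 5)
Your proposal is correct and matches the paper's approach exactly: the corollary is obtained by substituting $y=1$ into \eqref{abgen1} and simplifying the bracket $(m-1)b + x + b$ to $x+mb$, which is precisely the computation you carry out. The alternative derivation from Proposition~\ref{last1} with $a=1$ is a valid bonus consistency check but not needed.
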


Asymptotic analysis for $(1, b)$-parking functions could be analogously carried out as for parking functions. The following proposition is stated without proof (cf. Proposition \ref{Poisson-CLT-1}).

\begin{proposition}
Let $b$ be a fixed positive integer. Take $m \to \infty$.
Consider the parking preference $\pi \in \PF(1, b, m)$ chosen uniformly at random. Then $\lel(\pi) - 1 \dconv \mathrm{Poisson}(1/b)$ and $\one(\pi) - 1 \dconv \mathrm{Poisson}(1/b)$, i.e., for every fixed nonnegative integer $j$, 
\begin{equation*}
\PR\left(\lel(\pi)=1+j \hspace{.1cm} \vert \hspace{.1cm} \pi \in \PF(1, b, m) \right) \sim \frac{(1/b)^j e^{-1/b}}{j!},
\end{equation*}
and
\begin{equation*}
\PR\left(\one(\pi)=1+j \hspace{.1cm} \vert \hspace{.1cm} \pi \in \PF(1, b, m) \right) \sim \frac{(1/b)^j e^{-1/b}}{j!}.
\end{equation*}
\end{proposition}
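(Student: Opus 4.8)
The plan is to derive both limit laws from the closed-form generating functions already established, exactly as in the proof of Proposition~\ref{Poisson-CLT-1}. First I would take the univariate generating function $\sum_{\pi \in \PF(1,b,m)} y^{\lel(\pi)} = y(y+mb)^{m-1}$ from the corollary above and normalize by the total count $\vert \PF(1,b,m) \vert = (1+mb)^{m-1}$ (this is the value $a(a+mb)^{m-1}$ with $a=1$). The normalized polynomial
\begin{equation*}
\frac{y(y+mb)^{m-1}}{(1+mb)^{m-1}} = y \left( \frac{mb}{1+mb} + \frac{y}{1+mb} \right)^{m-1}
\end{equation*}
is precisely the probability generating function of $1 + \sum_{i=1}^{m-1} X_i$, where the $X_i$ are i.i.d.\ Bernoulli random variables taking the value $1$ with probability $\frac{1}{1+mb}$ and $0$ otherwise. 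Since $(m-1) \cdot \frac{1}{1+mb} \to \frac{1}{b}$ as $m \to \infty$, the classical law of rare events (Poisson approximation to the binomial) gives $\lel(\pi) - 1 \dconv \mathrm{Poisson}(1/b)$, and convergence of probability generating functions on $[0,1]$ implies convergence of the individual point probabilities, yielding the stated asymptotic for $\PR(\lel(\pi) = 1+j)$.

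Next I would repeat the identical argument for $\one(\pi) = \#1(\pi)$: the corollary gives $\sum_{\pi \in \PF(1,b,m)} x^{\one(\pi)} = x(x+mb)^{m-1}$, which is the same polynomial with $x$ in place of $y$. Normalizing by $(1+mb)^{m-1}$ produces the same Bernoulli sum representation, so the same Poisson limit and the same point-probability asymptotic follow verbatim. One could equivalently invoke the symmetry of the bivariate generating function in Corollary~\ref{last2-special} to note that $\one$ and $\lel$ are equidistributed over $\PF(1,b,m)$, so the second statement is a formal consequence of the first; either route works.

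There is essentially no serious obstacle here: the heavy lifting has already been done in deriving the product-form generating functions, and the only real content is recognizing the normalized generating function as that of a sum of independent Bernoulli variables and checking that the success probability scales like $\frac{1}{mb}$ so that the mean of the sum tends to $\frac{1}{b}$. The one point requiring a word of care is the passage from convergence of probability generating functions (or equivalently, convergence in distribution of integer-valued random variables) to convergence of the individual probabilities $\PR(\lel(\pi) = 1+j)$; this is standard for $\zz_{\geq 0}$-valued random variables, since pointwise convergence of a sequence of probability generating functions on a neighborhood of $0$ forces coefficientwise convergence. Accordingly, the proof is short enough that the authors' choice to state it without proof is fully justified, and the writeup would amount to little more than the two normalizations above together with the invocation of the law of rare events.
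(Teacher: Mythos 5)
Your argument is correct and is precisely the one the paper intends: the proposition is stated without proof with a pointer to Proposition~\ref{Poisson-CLT-1}, whose proof is exactly your normalization of the univariate generating function to the probability generating function of $1+\mathrm{Bin}(m-1,\tfrac{1}{1+mb})$ followed by the law of rare events. Your observation that the $\one$ case follows verbatim (or via the symmetry of Corollary~\ref{last2-special}) also matches the paper's setup, so nothing further is needed.
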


For our last result, we consider the special case $a = b = k$, where $k$ is any positive integer. In this case, we can modify the construction of Lemma~\ref{lm:decomp-a-b} slightly as follows. 

For a function $\pi: [m] \to [km]$, we define $\tilde{\pi}: [m] \to [m]$ by $\tilde{\pi}_i = \lceil \frac{\pi_i}{k} \rceil$, and $\alpha: [m] \to [k]$ is determined by $\alpha_i \equiv \pi_i \mod k$. It is clear that the pair $(\tilde{\pi},\alpha)$ determines $\pi$ uniquely. Moreover, an argument analogous to Lemma~\ref{lm:decomp-a-b} shows that $\pi$ is a $(k,k)$-parking function if and only if $\tilde{\pi}$ is a classical parking function, i.e., if $\tilde{\pi} \in \PF(m,m)$.

The final proposition is another variant of Corollary~\ref{cor:pf1}, now in the setting of $(k,k)$-parking functions. It exhibits an interesting symmetry that generalizes the symmetry between the $1$'s statistic $\one(\pi)$ and the leading elements statistic  $\lel(\pi)$ for classical parking functions $\pi \in \PF(n, n)$.
Let us say that $x \in [km]$ lies in the $i$-th \emph{block} if
$\lceil \frac{x}{k} \rceil =i$, meaning that 
$x$ belongs to the set $\{ik-k+1,ik-k+2,\ldots,ik\}$.
The block that contains the leading element of a parking function is called the \emph{leading block}. We write $\ell(\pi)$ for the index of the leading block. In other words, $\ell(\pi) = i$ if the leading element is one of $ik-k+1,ik-k+2,\ldots,ik$. We now consider a generating function that takes all elements in the first block and all elements in the leading block into account.





\begin{proposition} \label{prop:ab} 
We have
\begin{multline*}
\sum_{\pi \in \PF(k, k, m)}x_1^{\#1}\cdots x_k^{\#k}y_1^{\#(\ell(\pi)k-k+1)}\cdots y_k^{\#(\ell(\pi)k)}=\\
(m-1)(x_1+\cdots+x_k)(y_1+\cdots+y_k)(x_1+\cdots+x_k+y_1+\cdots+y_k+(m-1)k)^{m-2} \\
+(x_1y_1+\cdots+x_ky_k)(x_1y_1+\cdots+x_ky_k+k)(x_1y_1+\cdots+x_ky_k+mk)^{m-2}.
\end{multline*}
\end{proposition}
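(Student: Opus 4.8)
The plan is to mimic the proof of Theorem~\ref{thm:expl-formula} (equivalently Proposition~\ref{last2}), using the decomposition described just above the statement: a function $\pi\colon[m]\to[km]$ is a $(k,k)$-parking function if and only if $\tilde\pi\in\PF(m,m)$, where $\tilde\pi_i=\lceil\pi_i/k\rceil$, and the pair $(\tilde\pi,\alpha)$ with $\alpha_i\equiv\pi_i\bmod k$ reconstructs $\pi$. The key point is that $\#j(\pi)$ for $j$ in a fixed block $i$ is governed jointly by how many $i$'s occur in $\tilde\pi$ and how the residues $\alpha$ distribute among those positions; and $\lel$-type information transfers cleanly because $\pi_i=\pi_1$ iff $\tilde\pi_i=\tilde\pi_1$ and $\alpha_i=\alpha_1$.

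First I would split according to the value of the leading coordinate of $\tilde\pi$, i.e.\ according to whether the leading block $\ell(\pi)$ equals $1$ or not, paralleling the two cases ``first car prefers one of $1,\dots,a$'' versus ``first car prefers a spot $>a$'' in the proof of Proposition~\ref{last2}. \textbf{Case $\ell(\pi)=1$:} here the first block \emph{is} the leading block, so $x_1^{\#1}\cdots x_k^{\#k}y_1^{\#1}\cdots y_k^{\#k}$ contributes a factor $(x_jy_j)$ for every coordinate $\pi_i$ lying in block $1$ (with residue $j$), while coordinates outside block $1$ contribute nothing to these variables. By Pollak's argument applied to $\tilde\pi$, conditioned on $\#1(\tilde\pi)=s$ the residues $\alpha$ on those $s$ positions are independent uniform on $[k]$, the set of such positions has $\binom{m-1}{s-1}$ choices (it must contain $1$) times a factor $sm^{m-s-1}$ for the rest of $\tilde\pi$ and $k^{m-s}$ for the remaining residues, so summing gives
\[
\sum_{s\ge1}\binom{m-1}{s-1}(x_1y_1+\cdots+x_ky_k)^s\,(mk)^{m-s}
=(x_1y_1+\cdots+x_ky_k)\bigl(x_1y_1+\cdots+x_ky_k+mk\bigr)^{m-1}.
\]
A careful bookkeeping of the leading coordinate separately (it forces one specific residue, giving the $(x_1y_1+\cdots+x_ky_k+k)$ prefactor rather than one more power of $(\cdots+mk)$) produces exactly the second summand in the claimed formula, in direct analogy with the $(xy+b)(xy+mb)^{m-2}$ term of Corollary~\ref{last2-special}.

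\textbf{Case $\ell(\pi)\ne1$:} now block $1$ and the leading block are disjoint blocks of $\tilde\pi$. The $x$-variables track residues on the $\#1(\tilde\pi)$ positions mapping into block $1$; the $y$-variables track residues on the positions mapping into block $\ell(\pi)$, and among these the leading-element condition $\pi_i=\pi_1$ costs a factor $\frac1k$ per repeated residue, so $\lel(\pi)-1$ is binomial given $\lel(\tilde\pi)$. I would therefore write the generating function as a sum over $\tilde\pi\in\PF(m,m)$ with weight (number of $1$'s of $\tilde\pi$ feeding the $x$'s) $\times$ (number of leading-block occurrences feeding the $y$'s, with the $\lel$ bias) $\times$ $k^{\#\text{other}}$, and then invoke Corollary~\ref{cor:pf1}/Theorem~\ref{thm:all3} in the form $\sum_{\pi\in\PF(m,m)}x^{\one}y^{\lel}$ evaluated at the substitution forced by the residue structure, exactly as Proposition~\ref{last2} used Corollary~\ref{pf1}. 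This collapses to
\[
(m-1)(x_1+\cdots+x_k)(y_1+\cdots+y_k)\bigl(x_1+\cdots+x_k+y_1+\cdots+y_k+(m-1)k\bigr)^{m-2},
\]
matching the first summand. Adding the two cases gives the proposition. The main obstacle I anticipate is purely bookkeeping: correctly isolating the leading coordinate in each case so that the residue it pins down produces the right ``$+k$'' versus ``$+mk$'' and ``$+(m-1)k$'' shifts, and verifying that the substitution $x\mapsto x_1y_1+\cdots+x_ky_k$ (Case 1) resp.\ the paired substitution into $\sum x^{\one}y^{\lel}$ over $\PF(m,m)$ (Case 2) is exactly what the decomposition $(\tilde\pi,\alpha)$ dictates — once these substitutions are pinned down, the algebraic simplification is the same binomial-sum computation already carried out for Proposition~\ref{last2}.
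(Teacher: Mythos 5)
Your plan follows essentially the same route as the paper: decompose $\pi$ into the pair $(\tilde\pi,\alpha)$, split according to whether $\ell(\pi)=1$, and reduce to the classical generating function of Corollary~\ref{cor:pf1} for $\PF(m,m)$ via the substitutions $x\mapsto\frac1k(x_1+\cdots+x_k)$, $y\mapsto\frac1k(y_1+\cdots+y_k)$ when $\ell(\pi)\neq 1$ and $xy\mapsto\frac1k(x_1y_1+\cdots+x_ky_k)$ when $\ell(\pi)=1$, multiplying by $k^m$ for the choices of $\alpha$. That is exactly the paper's argument, so the overall strategy is sound.

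There is, however, a local error in your Case $\ell(\pi)=1$. The factors you list are all correct: $\binom{m-1}{s-1}$ for the set of positions landing in block $1$ (which must contain position $1$), $s\,m^{m-s-1}$ for the rest of $\tilde\pi$, $k^{m-s}$ for the remaining residues, and $\big(\sum_j x_jy_j\big)^{s}$ for the residues in block $1$. But you then combine $s\,m^{m-s-1}k^{m-s}$ into $(mk)^{m-s}$, which silently drops a factor $s/m$ (the identity $\binom{m}{s}s=m\binom{m-1}{s-1}$ that justifies this step in Proposition~\ref{last1} is unavailable here because your binomial coefficient is already $\binom{m-1}{s-1}$). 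With the factors combined as listed,
\begin{equation*}
\sum_{s\ge 1}\binom{m-1}{s-1}\Big(\sum_j x_jy_j\Big)^{s} s\,m^{m-s-1}k^{m-s}
=\Big(\sum_j x_jy_j\Big)\Big(\sum_j x_jy_j+k\Big)\Big(\sum_j x_jy_j+mk\Big)^{m-2},
\end{equation*}
which is already the second summand of the proposition. Consequently, the subsequent ``careful bookkeeping of the leading coordinate'' that you invoke to convert $(\cdots+mk)^{m-1}$ into $(\cdots+k)(\cdots+mk)^{m-2}$ is a patch for an error you introduced rather than a genuine mechanism: the residue of the leading coordinate is not forced (it ranges freely over $[k]$ and contributes one factor $\sum_j x_jy_j$ like every other block-$1$ position; its only special role, membership in the chosen set, is already accounted for by $\binom{m-1}{s-1}$). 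A similar, more minor imprecision occurs in Case $\ell(\pi)\neq 1$, where you describe the $y$-variables as tracking the leading-element statistic $\lel(\pi)$ with a binomial bias, whereas the proposition tracks all $k$ residue counts within the leading block; the substitution you ultimately land on is nevertheless the correct one.
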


\begin{proof}
Consider the decomposition of $\pi$ into $\tilde{\pi}$ and $\alpha$ that was described above. Note that $\tilde{\pi}_1 = \lceil\frac{\pi_1}{k}\rceil = \ell(\pi)$.
Moreover, $\tilde{\pi}_j = 1$ if and only if $\pi_j \in \{1,2,\ldots,k\}$, and $\tilde{\pi}_j = \ell(\pi)$ if and only if $\pi_j \in \{\ell(\pi)k-k+1,\ldots,\ell(\pi)k\}$. Thus $1,2,\ldots,k$ are precisely the values that contribute to $\one(\tilde{\pi})$, and $\ell(\pi)k-k+1,\ldots,\ell(\pi)k$ are precisely the values that contribute to $\lel(\tilde{\pi})$.

By Corollary~\ref{cor:pf1}, we have
\begin{equation*}   
\sum_{\tilde{\pi} \in \PF(m,m)} x^{\one(\tilde{\pi})}y^{\lel(\tilde{\pi})} = 
xy (m-1)(x+y+m-1)^{m-2} + xy(xy + 1)(xy + m)^{m-2}.
\end{equation*}
As can be seen from the proof of Theorem~\ref{thm:expl-formula}, the first term corresponds to the case that car $1$ does not park in spot $1$, and the second term to the case that car $1$ parks in spot $1$. 
\begin{itemize}
\item In the former case, we have $\ell(\pi) \neq 1$. If $\alpha$ is taken uniformly at random, each element that contributes to $\one(\tilde{\pi})$ becomes an element that contributes to one of $\#1, \ldots, \#k$, each with probability $\frac1{k}$, depending on the corresponding value of $\alpha$. Likewise, each element that contributes to $\lel(\tilde{\pi})$ becomes an element that contributes to one of $\#(\ell(\pi)k-k+1), \ldots, \#(\ell(\pi)k)$, each with probability $\frac1{k}$. All these elements are independent of each other. On the level of generating functions, this can be modeled by replacing $x$ by $\frac1k(x_1+\cdots+x_k)$, replacing $y$ by $\frac1k(y_1+\cdots+y_k)$, and multiplying by $k^m$ (the number of possibilities for $\alpha$). Thus we obtain a total contribution of
\begin{equation*}
    (m-1)(x_1+\cdots+x_k)(y_1+\cdots+y_k)(x_1+\cdots+x_k+y_1+\cdots+y_k+(m-1)k)^{m-2}
\end{equation*}
from this case.
\item In the latter case, a similar argument applies, except that every element that contributes to $\one(\tilde{\pi})$ is also an element that contributes to $\lel(\tilde{\pi})$ and vice versa. We have $\ell(\pi) = 1$, so $\#1 = \#(\ell(\pi)k-k+1)$, and so on. Every element that contributes to $\one(\tilde{\pi}) = \lel(\tilde{\pi})$ now becomes an element that contributes to one of $\#1 = \#(\ell(\pi)k-k+1), \ldots, \#k = \#(\ell(\pi)k)$, each with probability $\frac{1}{k}$, and all elements are independent of each other. This can now be modeled by replacing $xy$ by $\frac1k(x_1y_1 + \cdots + x_ky_k)$ and multiplying by $k^m$, for a total contribution of
\begin{equation*}
    (x_1y_1+\cdots+x_ky_k)(x_1y_1+\cdots+x_ky_k+k)(x_1y_1+\cdots+x_ky_k+mk)^{m-2}.
\end{equation*}
\end{itemize}
Combining the two cases yields the desired result.
\end{proof}

\section{$(a,b)$-parking functions and multi-colored trees} \label{sec:rho-ab}


Corollary~\ref{last2-special}  and Proposition~\ref{prop:ab} demonstrate  symmetries among certain statistics of special $(a,b)$-parking functions. In this section we introduce the notion of $(a,b)$-colored trees, which are in one-to-one correspondence with  $(a,b)$-parking functions. Then we give various extensions of the involution $\rho$ of Section 6 to explain those symmetries.    

The $(a,b)$-colored trees  on $[n]_0$  
are rooted trees in $\calt(n)$ in which each edge from the root $0$ is labeled by one of $1, 2, \dots, a$, and each other edge is labeled by one of $1, 2, \dots, b$. 
We refer to the edge-labels as colors. 
See Figure~\ref{fig:ab-tree} for an illustration, where $a=3$, $b=2$, $n=8$, and the edge color is circled. 
 Such structures were first considered in \cite{Yan2001} as \emph{sequences of rooted $b$-forests} and  proved to be in bijection with $(a,b)$-parking functions under a bijection  induced by the modified BFS algorithm.

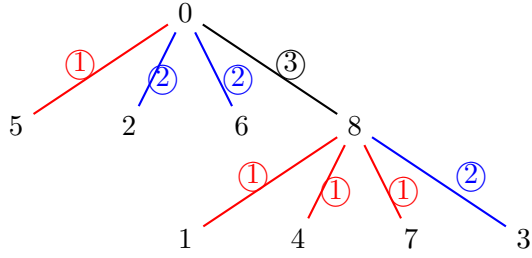
\begin{figure}[htbp]
    \centering
    \begin{tikzpicture}[scale=1]
    \node at (0,0) {0} [grow = down]
     child {node{5} edge from parent [color=red,thick] }
     child {node{2} edge from parent [color=blue,thick] } 
     child {node{6} edge from parent [color=blue,thick]} 
     child {node{8} edge from parent [thick] 
          child {node{1} edge from parent [color=red,thick]}
          child {node{4} edge from parent [color=red,thick]}
          child {node{7} edge from parent [color=red,thick]} 
          child {node{3} edge from parent [color=blue,thick]}} ;
    \node at (-1.4, -.7) {\red{\textcircled{1}}}; 
    \node at (-.3, -.9) {\blue{\textcircled{2}}}; 
    \node at (.7, -.9) {\blue{\textcircled{2}}}; 
    \node at (1.4, -.7) {{\textcircled{3}}}; 
    \node at (.9, -2.2) {\red{\textcircled{1}}}; 
    \node at (2, -2.4) {\red{\textcircled{1}}}; 
    \node at (2.9, -2.4)  {\red{\textcircled{1}}}; 
     \node at (3.8, -2.2) {\blue{\textcircled{2}}}; 
     
    \end{tikzpicture}
    \caption{An $(a,b)$-colored tree $T$  with $n=8$, $a=3$ and $b=2$. }
    \label{fig:ab-tree}
\end{figure}

First we describe the correspondence  from $(a,b)$-colored trees to $(a,b)$-parking functions as constructed in \cite{Yan2001}. We will again use  $\phi$ to represent this map, as the case with $a=b=1$ is exactly the one described in Section~\ref{sec:BFS}.
Given an $(a,b)$-colored tree $T$, the BFS order on its vertices is the same as the BFS order on  trees in $\calt(n)$, with the additional condition that vertices with the same predecessor are ordered in order of increasing edge colors, and, if the edges to their predecessor are of the same color,  in increasing order. For example, the $(a,b)$-colored tree in Figure~\ref{fig:ab-tree} has BFS order
\[
v_1 v_2 \cdots v_9 = 0, 5, 2, 6, 8, 1,  4, 7, 3.
\]
 Again, $\sigma_T^{-1}$ is the vertex ordering with $v_1=0$ removed, and $\sigma_T$ is the inverse permutation of $\sigma_T^{-1}$.

 For each vertex $v_i$ in the $(a,b)$-colored tree, the degree of $v_i$ is a sequence $(r_i^{(1)}, r_i^{(2)}, \dots)$  of length $a$ if $i=0$, and  $b$ otherwise, where the entry $r_i^{(j)}$ is the number of downward edges from $v_i$ of color $j$. The concatenation of the degrees of $v_1, \dots, v_{n}$ gives the specification $s(\pi)$, which, together with $\sigma_T$, yield the $(a,b)$-parking function $\pi=\phi(T)$. 

 For the above example, we have 
 $\sigma_T^{-1} = 5\ 2\ 6\ 8\ 1\ 4\ 7\ 3$, and hence,  $\sigma_T=5\ 2\ 8\ 6\ 1 \ 3\ 7\  4$. The specification is $s(\pi)=(1, 2, 1, 0,0,0,0,0,0,3,1,0,0,0,0,0,0,0,0)$. The non-decreasing rearrangement sequence associated with $s(\pi)$ is $1^1, 2^2, 3^1, 10^3, 11^1 = 1, 2,2, 3, 10, 10, 10, 11$. Therefore the $(a,b)$-parking function determined by $s(\pi)$ and $\sigma_T$ is $\pi=\phi(T)=(10, 2, 11, 10, 1, 2, 10, 3) \in \PF(3,2,8)$.   

 We remark that under the above bijection,  in the $(a,b)$-parking function,  $\pi=j$, or equivalently, car $i$ prefers spot $j$ if and only if 
 the following holds in the $(a,b)$-colored tree: 
 \begin{enumerate}
     \item if $1 \leq j \leq a$, then vertex $i$ is a child of vertex 0 
     with edge color $j$; 
     \item if $a+(k-1)b < j \leq a+kb$ for $k \geq 1$, then vertex $i$ is 
     a child of the $(k+1)$-th vertex $v_{k+1}$ with edge color $j-a \mod b$. 
 \end{enumerate}
It follows that  $(\#1(\pi), \#2(\pi), \dots, \#a(\pi))$ is exactly the degree of the root  $0$. 
Assume vertex $1$ is a child of vertex $p$, 
and the edge $(p,1)$ has color $j$.  
Then $\lel(\pi)=\# \pi_1(\pi)$ is the number of children of $p$ with edge color $j$. 
If $p \neq 0$, write $\pi_1  = a+kb +t$ for some integer $k \geq 0$ and $1\leq t \leq b$. Then $\deg(p)= (\#(a+kb+1)(\pi), \cdots, \#(a+kb+b)(\pi)).$ 

Next we construct  involution $\rho_{a,b}$ on the set of $(a,b)$-colored trees for the cases $a=1$ and $a=b=k$; both are extensions of the involution $\rho$ of Section 6 and  provide bijective proofs for the symmetries exhibited in Corollary~\ref{last2-special}  and Proposition~\ref{prop:ab}.

Definition of the bijection $\rho_{a,b}$ : 
 In an $(a,b)$-colored tree $T$ let the path from vertex $0$ to vertex $1$ be $0-q-\cdots - p -1$. Apply the following map if $p\neq 0$. 
 \begin{enumerate}
     \item \textbf{The case $\boldsymbol{a=1}$.}  Assume that the children of $0$ are $q, t_1, \dots, t_c$, which are all connected to $0$ with edges of color $1$.  Assume $(p,1)$ has color $j$. Among the children of $p$, assume the ones connected to $p$ with color $j$ are $1, s_1, \dots, s_d$.  Then $\rho_{1, b}(T)$ is obtained by
     \begin{enumerate}
         \item replacing the edges $(0, t_i)$ with $(p, t_i)$ of color $j$; 
         \item replacing the edges $(p, s_i)$  with $(0, s_i)$ of color 1; 
         \item keeping all other edges unchanged. 
     \end{enumerate}
     The map $\rho_{1,b}$ is a bijiection that swaps $\deg(0)$ with the $j$-th component of $\deg(p)$.
     
     \item \textbf{The case $\boldsymbol{a=b=k}$}. Assume that the children of $0$ are $q, t_1, \dots, t_c$, and the children of $p$ are $1, s_1, \dots, s_d$. Then $\rho_{k,k}(T)$ is obtained by 
     \begin{enumerate}
         \item replacing the edges $(0, t_i)$ with $(p, t_i)$ of the same color; 
         \item replacing the edges $(p, s_i)$ with $(0, s_i)$ of  the same color; 
         \item exchanging the color of edges $(0, q)$ and $(p,1)$; 
         \item keeping all other edges unchanged. 
     \end{enumerate}
       The map $\rho_{k,k}$ swaps $\deg(0)$ with $\deg(p)$, both of which are vectors with $k$ components.     
 \end{enumerate}

In a $(1,b)$-colored  (resp. $(k,k)$-colored) tree, 
for each vertex $t \neq 0, p$, 
the map $\rho_{1,b}$ (resp.~$\rho_{k,k}$)  preserves the set of children of $t$, as well as the color on the edges from $t$ to its children.

Letting $\hat \rho_{1,b} = \phi \circ \rho_{1,b}  \circ \phi^{-1}$, 
we get an involution on $P(1,b,n)$, which satisfies the 
 following  refinement of Corollary ~\ref{last2-special}. 

\begin{corollary}
    Fix a set partition $\mathcal{P}=\{P_1, P_2, \dots, P_t\}$ of $[n]$. Then $\lel(\pi)$ and $\one(\pi)$ have a symmetric joint distribution over all $(1,b)$-parking functions whose reduced preference partition is $\mathcal{P}$. 
\end{corollary}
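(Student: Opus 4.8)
The plan is to imitate the proof of Theorem~\ref{thm:map-rho}, using the involution $\hat\rho_{1,b}=\phi\circ\rho_{1,b}\circ\phi^{-1}$ on $\PF(1,b,n)$ introduced just above. Concretely, I would establish three facts: (i) $\rho_{1,b}$, and hence $\hat\rho_{1,b}$, is a well-defined involution; (ii) $\hat\rho_{1,b}$ interchanges $\lel(\pi)$ and $\one(\pi)$; and (iii) $\hat\rho_{1,b}$ preserves the reduced preference partition of $\pi$. Granting these, the corollary follows at once: $\hat\rho_{1,b}$ restricts to an involution on the set of $(1,b)$-parking functions with reduced preference partition $\mathcal{P}$, and substituting $\pi\mapsto\hat\rho_{1,b}(\pi)$ in $\sum_{\pi}x^{\lel(\pi)}y^{\one(\pi)}$ over that set turns the sum into the same sum with $x$ and $y$ swapped; the fixed points, where $\lel(\pi)=\one(\pi)$, are consistent with this symmetry.

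I would carry out the verification on the tree side, where everything is transparent. Recall that under $\phi$ one has $\one(\pi)=\deg(0)$ and $\lel(\pi)$ equals the $j$-th component of $\deg(p)$, where $p$ is the parent of vertex $1$ and $j$ the color of the edge $(p,1)$, and that the nonempty ``same parent, same color'' classes of a $(1,b)$-colored tree are exactly the blocks $B_\pi(i)$ of the preference partition of $\pi=\phi(T)$. First, $\rho_{1,b}$ is well defined: it only re-attaches the subtrees rooted at the $t_i$ (the children of $0$ other than $q$) below $p$ with color $j$, and the subtrees rooted at the $s_i$ (the children of $p$ of color $j$ other than $1$) below $0$ with color $1$; since $p$ lies in none of the $t_i$-subtrees and $0$ lies in none of the $s_i$-subtrees, no cycle is created, and only legal colors are used ($1$ for edges from the root, a value in $[b]$ for the others). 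Re-running the recipe on $\rho_{1,b}(T)$ uses the same path $0-q-\cdots-p-1$, the same edge $(p,1)$ of color $j$, and the same edge $(0,q)$ of color $1$ (all untouched by the map), and returns $T$; hence $\rho_{1,b}$ is an involution. Since the children of $0$ are $q,t_1,\ldots,t_c$ and the children of $p$ of color $j$ are $1,s_1,\ldots,s_d$ before the map, and $q,s_1,\ldots,s_d$ and $1,t_1,\ldots,t_c$ afterwards, $\deg(0)$ and the $j$-th component of $\deg(p)$ are swapped, giving (ii). For every vertex $t\neq 0,p$ the children of $t$ and their colors are untouched, so every block $B_\pi(i)$ except $B_\pi(1)$ and $B_\pi(\pi_1)$ is preserved, while $B_\pi(1)\cup B_\pi(\pi_1)$ — the union of the children of $0$ with the children of $p$ of color $j$ — equals $\{q,1,t_1,\ldots,t_c,s_1,\ldots,s_d\}$ both before and after the map; this is precisely the statement that the reduced preference partition is preserved, i.e. (iii). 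The degenerate case $p=0$, which for $a=1$ forces $\pi_1=1$ and hence $\lel(\pi)=\one(\pi)$ and $B_\pi(1)=B_\pi(\pi_1)$, is covered by the convention that $\rho_{1,b}$ acts as the identity there.

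Transporting these statements back through the bijection $\phi$ gives (i)--(iii) for $\hat\rho_{1,b}$ on $\PF(1,b,n)$, and the corollary follows as in the first paragraph. The one point that requires genuine care rather than bookkeeping is matching the definition of the reduced preference partition to the tree operation: one must check that $B_\pi(1)\cup B_\pi(\pi_1)$ is exactly the union of the two edge-color classes at $0$ and $p$ that $\rho_{1,b}$ scrambles, and that these are the only blocks affected; everything else is routine tracking of which subtree hangs from which vertex.
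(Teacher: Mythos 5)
Your proposal is correct and follows exactly the route the paper intends: it invokes the involution $\hat\rho_{1,b}=\phi\circ\rho_{1,b}\circ\phi^{-1}$ defined just before the corollary, which the paper asserts (without spelling out the details) swaps $\deg(0)$ with the $j$-th component of $\deg(p)$ while preserving the children and edge-colors of every other vertex, hence preserves the reduced preference partition. You have simply made explicit the verifications (well-definedness, involution property, the swap, and the identification of $B_\pi(1)\cup B_\pi(\pi_1)$ with the two scrambled color classes at $0$ and $p$) that the paper leaves implicit, including the degenerate case $p=0$; all of these check out.
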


Next we consider $(k,k)$-parking functions with positive integer $k$. 
Let $\pi \in \PF(k,k,n)$ and $\ell(\pi)= \lceil \frac{\pi_1}{k} \rceil$ be the index of the leading block.  
For each index $i$, let 
$S_\pi(i)=(B_\pi(ik-k+1), B_\pi(ik-k+2), \dots, B_\pi(ik))$, where $B_\pi(x)=\{j: \pi_j=x\}$ is the set of cars that prefer spot $x$. 
Let $\mathcal{O}(\pi)$ be the set 
$\{S_\pi(i): i \neq 1, \ell(\pi),  S_\pi(i)\neq (\emptyset, \dots \emptyset)\}$.  
Then the bijection $\hat \rho_{k,k} = \phi \circ \rho_{k,k} \circ \phi^{-1}$  preserves  $\mathcal{O}(\pi)$.

\begin{figure}[htbp]
 \centering
    \begin{tikzpicture}[scale=0.6]

   \node at (0, 3) {$\pi=(10, 3, 2, 9, 3, 1)$}; 

   \draw [->, thick] (0,1)--(0, 2.5); 
   \node at (0.3, 1.5) {$\phi$}; 
   
   \node at (10,3) {$\hat \pi = (9,5, 10, 1, 5, 2)$}; 

    \draw [->, thick] (10,1)--(10, 2.5); 
   \node at (10.3, 1.5) {$\phi$}; 
    \node at (0,0) {0} [grow = down]
     child {node{6} edge from parent [color=red,thick] 
        child {node{\textcolor{black}{2}} edge from parent [color=red,thick]}
        child {node{\textcolor{black}{5}} edge from parent [color=red,thick]
             child {node{\textcolor{black}{4}} edge from parent [color=red,thick]}
             child {node{\textcolor{black}{1}} edge from parent [color=blue,thick]}
              }
            }
     child {node{3} edge from parent [color=blue,thick] };

     \draw [<->, thick] (4,-2) -- (6,-2);
\node at (5, -1.5) {$\rho_{2,2}$}; 

    \node at (10,0) {0} [grow = down]
      child {node{4} edge from parent [color=red,thick] }
     child {node{6} edge from parent [color=blue,thick] 
        child {node{\textcolor{black}{2}} edge from parent [color=red,thick]}
        child {node{\textcolor{black}{5}} edge from parent [color=red,thick]
             child {node{\textcolor{black}{1}} edge from parent [color=red,thick]}
             child {node{\textcolor{black}{3}} edge from parent [color=blue,thick]}
              }
            };

    \end{tikzpicture}
    \caption{An example of a $(2,2)$-parking function $\pi$ and the corresponding $(2,2)$-colored tree $T$.   } 
    \label{fig:kk-PF}
\end{figure}
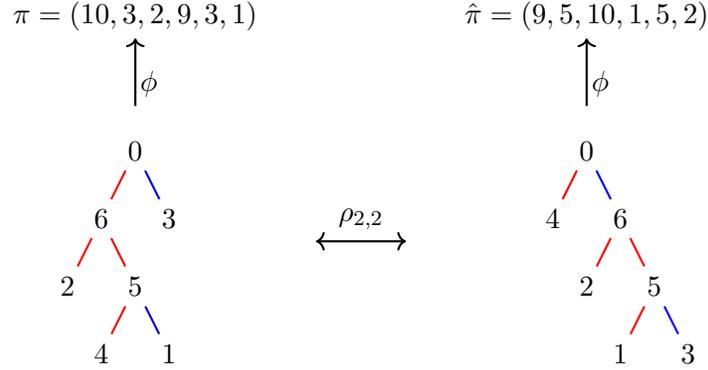

We explain the above concepts with
an example of $\pi=(10, 3, 2, 9, 3, 1) \in \PF(2,2,6)$, which corresponds to the $(2,2)$-colored tree on the left of 
Figure~\ref{fig:kk-PF}. Here red represents color 1 and blue represents color 2. 
Applying $\rho_{2,2}$ yields the 
$(2,2)$-colored tree on the right, which in turn gives 
$\hat \pi = (9, 5, 10, 1, 5, 2) \in \PF(2,2,6)$.  
The table below shows $S(i)$ for $\pi$ and $\hat \pi$. 

\begin{center} 
\begin{tabular}{|c|c|c|c|c|c|c|}
\hline
  & $S(1)$  & $S(2)$ & $S(3)$ & $S(4)$ & $S(5)$ & $S(6)$ \\
\hline
$\pi$  & $(\{6\}, \{3\}) $ & $(\{2,5\}, \emptyset)$ & $(\emptyset, \emptyset)$  & $(\emptyset, \emptyset)$  & $(\{4\}, \{1\})$ & $(\emptyset,\emptyset)$ \\
\hline
$\hat \pi$  & $(\{4\}, \{6\})$ & $(\emptyset,\emptyset)$ & $(\{2,5\}, \emptyset)$  & $(\emptyset,\emptyset)$ & $(\{1\}, \{3\})$ & $(\emptyset,\emptyset)$  \\
\hline
\end{tabular}
\end{center} 

In this example,  $\ell(\pi)=\ell(\hat \pi)=5$. From the table both 
$\mathcal{O}(\pi)$ and $\mathcal{O}(\hat \pi)$ are $\{ (\{2,5\}, \emptyset) \}$, as expected.

Let $\mathcal{O}=\{S(1), \dots, S(t)\}$, where 
\begin{enumerate}
    \item each $S(i)$ is a vector of length $k$ whose entries are subsets of $[n]$; 
    \item  $S(i) \neq (\emptyset, \dots, \emptyset)$; and 
    \item all the non-empty entries of $S(1), \dots, S(t)$ are mutually disjoint, and their  union is a subset of $\{2, \dots, n\}.$
\end{enumerate}
The involution $\hat \rho_{k,k}$ leads to the following refinement of Proposition~\ref{prop:ab}. 
\begin{proposition}
    Fix a set $\mathcal{O}$ as above. Then $(\#1, \dots, \#k)$ and $(\#(\ell(\pi)k-k+1), \dots, \#(\ell(\pi) k) )$ have a symmetric joint distribution over all $(k,k)$-parking functions $\pi$ with $\mathcal{O}(\pi)=\mathcal{O}$. 
\end{proposition}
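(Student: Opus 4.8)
The plan is to prove the statement entirely on the side of $(k,k)$-colored trees, using the bijection $\phi$ and the involution $\rho_{k,k}$ of this section; the argument is the exact analogue of the one by which Theorem~\ref{thm:map-rho} follows from $\rho$.

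I would first translate the two vector statistics and the constraint through $\phi$. By the dictionary established above, for $\pi \in \PF(k,k,n)$ with $T = \phi^{-1}(\pi)$ the vector $(\#1(\pi),\dots,\#k(\pi))$ is the colored root degree $\deg(0)$, while $(\#(\ell(\pi)k-k+1)(\pi),\dots,\#(\ell(\pi)k)(\pi))$ is the colored degree $\deg(p)$ of the parent $p$ of the vertex labeled $1$; here one uses that the block containing $\pi_1$ corresponds under $\phi$ to the BFS-vertex $v_{\ell(\pi)}$, which is precisely $p$, and that $p = 0$ exactly when $\ell(\pi)=1$, in which case the two vectors literally coincide. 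Likewise, for $i \geq 2$ the vector $S_\pi(i)$ records, coordinate by coordinate in the colors $1,\dots,k$, the children of $v_i$, and $S_\pi(1)$ records the colored children of the root. Since each vertex has a unique parent, distinct vertices contribute vectors with disjoint nonempty entries, so on its nonempty values the assignment $i \mapsto S_\pi(i)$ is injective; hence $\mathcal{O}(\pi)$ is exactly the set of nonempty colored-children vectors of the vertices of $T$ other than $0$ and $p$.

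I would then invoke the properties of $\rho_{k,k}$ recorded above: it is an involution on $(k,k)$-colored trees, it keeps vertex $1$ a child of $p$ (the edge $(p,1)$ is retained, only possibly recolored), it preserves the colored children set of every vertex other than $0$ and $p$, and it interchanges $\deg(0)$ with $\deg(p)$ (when $p = 0$ it is the identity and all of this is trivial). Since the parent of vertex $1$ in $\rho_{k,k}(T)$ is again $p$, the pair of excluded vertices $\{0,p\}$ is the same for $T$ and for $\rho_{k,k}(T)$, and as the colored children of every other vertex are preserved, the characterization of $\mathcal{O}$ above yields $\mathcal{O}(\phi(\rho_{k,k}(T))) = \mathcal{O}(\phi(T))$; note that we do not need $\ell(\hat{\pi}) = \ell(\pi)$, only that $p$ is the same vertex, because $\mathcal{O}$ is an unordered set. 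Consequently $\rho_{k,k}$ restricts to an involution on $\{T : \mathcal{O}(\phi(T)) = \mathcal{O}\}$, so $\hat{\rho}_{k,k} = \phi \circ \rho_{k,k} \circ \phi^{-1}$ is an involution on $\{\pi \in \PF(k,k,n) : \mathcal{O}(\pi) = \mathcal{O}\}$ that swaps $(\#1,\dots,\#k)$ with $(\#(\ell(\pi)k-k+1),\dots,\#(\ell(\pi)k))$; an involution interchanging two vector-valued statistics forces their joint distribution to be symmetric, which is the claim.

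Modulo the properties of $\rho_{k,k}$ stated above, the only real work is the colour bookkeeping of the first two steps, whose one subtle point is the identification of the block of $\pi_1$ with the vertex $p$ (so that $\mathcal{O}(\pi)$ becomes the set of colored-children vectors of all vertices other than $0$ and $p$), together with the degenerate case $p=0$. If instead a self-contained argument is wanted, the genuine obstacle is verifying those four properties of $\rho_{k,k}$ — above all that step~(3) of its definition (the swap of the colours of $(0,q)$ and $(p,1)$) makes it interchange the \emph{full} vectors $\deg(0)$, $\deg(p)$ and keep $p$ the parent of $1$ — which requires a short case analysis over the shape of the path $0-q-\cdots-p-1$, including the length-two case $q=p$.
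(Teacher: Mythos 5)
Your proposal is correct and follows exactly the route the paper intends: the paper derives this proposition from the involution $\hat\rho_{k,k}=\phi\circ\rho_{k,k}\circ\phi^{-1}$, which swaps the colored degree vectors $\deg(0)$ and $\deg(p)$ while preserving the colored children of every other vertex and hence the set $\mathcal{O}(\pi)$. Your additional bookkeeping — identifying $\mathcal{O}(\pi)$ with the nonempty colored-children vectors of the vertices other than $0$ and $p$, noting that only the vertex $p$ (not the index $\ell(\pi)$) needs to be preserved since $\mathcal{O}$ is unordered, and handling the degenerate case $p=0$ — faithfully fills in details the paper leaves implicit.
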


Consequently, the symmetry between $(\#1, \dots, \#k)$ and $(\#(\ell(\pi)k-k+1), \dots, \#(\ell(\pi) k) )$ still holds if we take the set $\mathcal{O}$ with some constraints on its elements.
The final corollary gives two such examples.

\begin{corollary} 
   \begin{enumerate} 
   \item   Let $n \geq 2$. Statistics $(\#1, \dots, \#k)$ and $(\#(\ell(\pi)k-k+1), \dots, \# (\ell(\pi) k) )$   have a symmetric joint distribution over the set
    $$\{ (\pi_1, \dots, \pi_n) \in \PF(k,k,n):  
   \pi_2 \equiv t \mod k  \text{ and }   
    \Big\lceil \frac{\pi_2}{k}\Big\rceil  \neq   1,  \ell(\pi)\},$$ where $t$ is a fixed integer in $[k]$.      
    \item Let $n \geq 3$. Statistics $(\#1, \dots, \#k)$ and $(\#(\ell(\pi)k-k+1), \dots, \#(\ell(\pi) k ))$   
    have a symmetric joint distribution over the set 
    \[
    \{ (\pi_1, \dots, \pi_n) \in \PF(k,k,n):  
     \Big\lceil \frac{\pi_2}{k} \Big\rceil = \Big\lceil \frac{\pi_3}{k} \Big\rceil
    \text{ and }   
    \Big\lceil \frac{\pi_2}{k}\Big\rceil  \neq   1,  \ell(\pi)
    \}.
    \]
   
    \end{enumerate}
\end{corollary}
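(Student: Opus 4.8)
The plan is to deduce this corollary directly from the preceding Proposition, by showing that each of the two conditions cuts out a set that is a disjoint union of fibers of the map $\pi\mapsto\mathcal{O}(\pi)$. Since the Proposition gives a symmetric joint distribution of $(\#1,\dots,\#k)$ and $(\#(\ell(\pi)k-k+1),\dots,\#(\ell(\pi)k))$ on each such fiber, and a disjoint union of symmetric distributions is again symmetric, the result follows once this reduction is in place. This is the same mechanism by which the corollary following Theorem~\ref{thm:map-rho} was obtained from that theorem, now with the admissible sets $\mathcal{O}$ playing the role previously played by set partitions.

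For part (1), I would write $i_0=\lceil\pi_2/k\rceil$ for the block index of $\pi_2$. The congruence $\pi_2\equiv t\bmod k$ is precisely the statement that the car labeled $2$ lies in the $t$-th entry of the vector $S_\pi(i_0)$ (with the convention that the entry index $t\in[k]$ corresponds to the residue $t\bmod k$). The additional requirement $i_0\neq 1,\ell(\pi)$ is exactly what guarantees that $S_\pi(i_0)$ is one of the vectors recorded in $\mathcal{O}(\pi)$; note that $S_\pi(i_0)$ is automatically non-$(\emptyset,\dots,\emptyset)$ because $2$ belongs to it. Conversely, whenever some vector of $\mathcal{O}(\pi)$ has $2$ in its $t$-th entry, the corresponding block index is automatically different from $1$ and from $\ell(\pi)$ by the very definition of $\mathcal{O}(\pi)$, and $\pi_2\equiv t\bmod k$. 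Hence the condition in part (1) depends on $\pi$ only through $\mathcal{O}(\pi)$, and the set in question is the disjoint union, over all admissible $\mathcal{O}$ that contain at least one vector whose $t$-th entry contains $2$, of the sets $\{\pi\in\PF(k,k,n):\mathcal{O}(\pi)=\mathcal{O}\}$. Applying the Proposition to each piece yields part (1).

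For part (2), the argument is identical, except that ``car $2$ lies in the $t$-th entry of $S_\pi(i_0)$'' is replaced by ``both cars $2$ and $3$ occur among the entries of one and the same vector $S_\pi(i_0)$''. The hypothesis $\lceil\pi_2/k\rceil=\lceil\pi_3/k\rceil$ together with $\lceil\pi_2/k\rceil\neq 1,\ell(\pi)$ says exactly that $2$ and $3$ sit in a common recorded vector of $\mathcal{O}(\pi)$, so once again the target set is a disjoint union of fibers of $\pi\mapsto\mathcal{O}(\pi)$, each carrying the desired symmetry by the Proposition.

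I expect the only point requiring care to be the bookkeeping around the two excluded blocks: one must verify cleanly that ``$\pi_2$ lies in a block other than the first block and the leading block'' is equivalent to ``$2$ appears among the entries of some vector in $\mathcal{O}(\pi)$'', using that car $1$ always lies in block $\ell(\pi)$ and that, by construction, $\mathcal{O}(\pi)$ records precisely those $S_\pi(i)$ with $i\neq 1,\ell(\pi)$ that are non-empty. Once this equivalence is established, both parts are immediate, and no computation beyond the Proposition is needed.
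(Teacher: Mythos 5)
Your proposal is correct and matches the paper's argument: the paper's (one-sentence) proof likewise observes that the first condition says that $2$ lies in the $t$-th entry of a vector in $\mathcal{O}(\pi)$ and the second that $2$ and $3$ lie in the same vector of $\mathcal{O}(\pi)$, so the sets are unions of fibers of $\pi\mapsto\mathcal{O}(\pi)$ and the preceding proposition applies. Your write-up simply spells out the bookkeeping (including the residue convention $t\in[k]$ versus $t\bmod k$) that the paper leaves implicit.
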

\begin{proof}
    The first means that $2$ is in the $t$-th entry of a vector  in $\mathcal{O}$, and the second means both $2$ and $3$ appear in the same vector in $\mathcal{O}$. 
\end{proof}

\section*{Acknowledgements}

Stephan Wagner and Mei Yin benefited from participation in the Workshop on Analytic and Probabilistic Combinatorics at the Banff International Research Station in November 2022 and also from participation in the 34th International Conference on Probabilistic, Combinatorial and Asymptotic Methods for the Analysis of Algorithms in Taipei in June 2023.


\bibliographystyle{abbrv}
\bibliography{bibliography}

\end{document}